\crefname{theorem}{Theorem}{Theorems}
\crefname{thm}{Theorem}{Theorems}
\crefname{lemma}{Lemma}{Lemmas}
\crefname{lem}{Lemma}{Lemmas}
\crefname{remark}{Remark}{Remarks}
\crefname{prop}{Proposition}{Propositions}
\crefname{defn}{Definition}{Definitions}
\crefname{corollary}{Corollary}{Corollaries}
\crefname{conjecture}{Conjecture}{Conjectures}
\crefname{question}{Question}{Questions}
\crefname{chapter}{Chapter}{Chapters}
\crefname{section}{Section}{Sections}
\crefname{figure}{Figure}{Figures}
\theoremstyle{plain}
\newtheorem{thm}{Theorem}[section]
\newtheorem{lemma}[thm]{Lemma}
\newtheorem{corollary}[thm]{Corollary}
\newtheorem{prop}[thm]{Proposition}
\theoremstyle{definition}
\theoremstyle{remark}
\newtheorem{remark}[thm]{Remark}
\numberwithin{equation}{section}
\renewcommand{\P}{\mathbb P}
\newcommand{\E}{\mathbb E}
\newcommand{\R}{\mathbb R}
\newcommand{\Z}{\mathbb Z}
\newcommand{\N}{\mathbb N}
\newcommand{\cF}{\mathcal F}
\newcommand{\sA}{\mathscr A}
\newcommand{\sB}{\mathscr B}
\newcommand{\sC}{\mathscr C}
\newcommand{\sD}{\mathscr D}
\newcommand{\sF}{\mathscr F}
\newcommand{\sG}{\mathscr G}
\newcommand{\sH}{\mathscr H}
\newcommand{\sM}{\mathscr M}
\newcommand{\sN}{\mathscr N}
\newcommand{\eps}{\varepsilon}
\newcommand{\bP}{\mathbf P}
\newcommand{\bE}{\mathbf E}
\newcommand{\myfrac}[3][0pt]{\genfrac{}{}{}{}{\raisebox{#1}{$#2$}}{\raisebox{-#1}{$#3$}}}
\newcommand{\Bridges}{\operatorname{Br}}
\def\P{\mathbb{P}}
\DeclareMathSymbol{\leqslant}{\mathalpha}{AMSa}{"36} 
\DeclareMathSymbol{\geqslant}{\mathalpha}{AMSa}{"3E} 
\DeclareMathSymbol{\eset}{\mathalpha}{AMSb}{"3F}     
\renewcommand{\epsilon}{\varepsilon}
\newcommand{\bD}{\mathbf{D}}
\newcommand{\bU}{\mathbf{U}}
\newcommand{\Rad}{\operatorname{Rad}}
\newcommand{\intrad}{\operatorname{Rad}_\mathrm{int}}
\title{\bf Slightly supercritical percolation on nonamenable graphs I:\\ The distribution of finite clusters
}
\renewenvironment{abstract}
 {\par\noindent\textbf{\abstractname.}\ \ignorespaces}
 {\par\medskip}
\author{{\bf Tom Hutchcroft}}
\begin{document}

\date{\small{\today}}

\maketitle

\setstretch{1.1}

\begin{abstract}
We study the distribution of finite clusters in slightly supercritical ($p \downarrow p_c$) Bernoulli bond percolation on transitive nonamenable graphs, proving in particular that if $G$ is a transitive nonamenable graph satisfying the \emph{$L^2$ boundedness condition} ($p_c<p_{2\to 2}$) and $K$ denotes the cluster of the origin then there exists $\delta>0$ such that if $p\in (p_c-\delta,p_c+\delta)$ then
\begin{align*}
\bP_p(n \leq |K| < \infty) &\asymp n^{-1/2} \exp\left[ -\Theta \!\left( |p-p_c|^2 n\right) \right]
\intertext{and}
\bP_p(r \leq \Rad(K) < \infty) &\asymp r^{-1} \exp\left[ -\Theta \Bigl( |p-p_c| r\Bigr) \right]
\end{align*}
for every $n,r\geq 1$, where all implicit constants depend only on $G$. We deduce in particular that the critical exponents $\gamma'$ and $\Delta'$ describing the rate of growth of the moments of a finite cluster as $p \downarrow p_c$ take their mean-field values of $1$ and $2$ respectively.

 These results apply in particular to Cayley graphs of nonelementary hyperbolic groups, to products with trees, and to transitive graphs of spectral radius $\rho<1/2$. In particular, every finitely generated nonamenable group has a Cayley graph to which these results apply. They are new for graphs that are not trees. 
The corresponding facts are yet to be understood on $\Z^d$ even for $d$ very large. In a second paper in this series, we will apply these results to study the geometric and spectral properties of infinite slightly supercritical clusters in the same setting.
\end{abstract}






\section{Introduction}\label{sec:intro}

In \textbf{Bernoulli bond percolation}, each edge of a countable graph $G=(V,E)$ is either deleted (\textbf{closed}) or retained (\textbf{open}) independently at random with retention probability $p\in[0,1]$ to obtain a random subgraph $\omega$ of $G$. The connected components of $\omega$ are referred to as \textbf{clusters}. We will be primarily interested  in the case that $G$ is \textbf{transitive}, i.e., that the automorphism group of $G$ acts transitively on $V$, or more generally that $G$ is \textbf{quasi-transitive}, i.e., that the action of the automorphism group of $G$ on $V$ has at most finitely many orbits. When $G$ is infinite, the \textbf{critical probability} $p_c=p_c(G)$ is defined by
\[
p_c=\sup\bigl\{p\in [0,1] : \text{ every cluster is finite $\bP_p$-a.s.}\bigr\},
\]
where we write $\bP_p=\bP_p^G$ for the law of Bernoulli-$p$ bond percolation on $G$. It is now known that the phase transition is \emph{non-trivial} (i.e., that $0<p_c<1$) for every infinite quasi-transitive graph with superlinear volume growth \cite{bperc96,1806.07733,lyons1995random}.


Percolation theorists are primarily interested in the geometry of clusters, and how this geometry changes as $p$ is varied. The theory is naturally decomposed into several regimes according to the relationship between $p$ and $p_c$. 
One possible taxonomy is as follows:
\begin{enumerate}
\item The \emph{subcritical} regime, in which $0< p < p_c$.
\item The \emph{slightly subcritical} regime, in which $0<p_c-p \ll 1$.
\item The \emph{critical} regime, in which $p=p_c$.
\item The \emph{slightly supercritical} regime, in which $0<p-p_c \ll 1$.
\item The \emph{supercritical} regime, in which $p_c<p < 1$.
\end{enumerate}
(It is sometimes desirable to further differentiate the \emph{very subcritical} regime $p \ll 1$ and \emph{very supercritical} regime $1-p \ll 1$; these regimes are often much easier to understand.)
Among all of these regimes, the most difficult to study is usually the \emph{slightly supercritical} regime.
A central difficulty in the study of this regime, and in supercritical percolation more generally, is that one is  interested in the probability of highly non-monotone events for the percolation configuration, such as $\{n \leq |K|<\infty\}$ where $K$ is the cluster of the origin, while many of the tools that have been developed in the study of the other regimes are either mostly or exclusively  suited to the analysis of monotone events and functions.


Indeed, there are essentially only two examples in which slightly supercritical percolation is reasonably well understood: trees\footnote{For $k$-regular trees and $p \geq p_c$, the conditional distribution of the cluster of the origin  given that it is finite is the same in Bernoulli-$p$ and Bernoulli-$q$ percolation, where $q$ is the unique to solution to $q (1-q)^{k-2}=p(1-p)^{k-2}$ lying in $[0,p_c]$. This duality is a consequence of the fact that every finite connected subgraph of a $k$-regular tree containing $n$ edges also touches exactly $(k-2)n+k$ edges that it does not contain.  As $p\downarrow p_c$, this dual probability $q$ satisfies  $p_c-q \sim p-p_c$. Thus, all questions concerning the distribution of finite clusters in slightly supercritical percolation can immediately be converted into questions concerning slightly subcritical percolation, which are much easier. This property is very specific to trees, and these arguments do not generalize to other nonamenable transitive graphs. Let us note, however, that slightly more involved duality arguments should also allow one to understand slightly supercritical percolation on transitive nonamenable \emph{proper plane} graphs with locally finite planar dual; to our knowledge such an analysis has not been carried out in the literature. Note that such graphs are always Gromov hyperbolic \cite{MR3658330} and therefore have $p_c<p_{2\to 2}$ by the results of \cite{1804.10191}. Thus, the results of this paper are always applicable to them.} and site percolation on the triangular lattice. In both cases, there are exact duality relations, developed extensively in the Euclidean setting by Kesten \cite{MR879034}, that allow us to convert questions about slightly supercritical percolation into questions about slightly \emph{subcritical} percolation. 
 In the case of trees these slightly subcritical questions can then be answered with the classical theory of branching processes (see e.g.\ \cite[Chapter 10]{grimmett2010percolation}), while for site percolation on the triangular lattice Smirnov and Werner \cite{MR1879816} showed that they can be answered by combining the aforementioned work of Kesten \cite{MR879034} with the theory of conformally invariant scaling limits and SLE as developed in the landmark works of Schramm \cite{Schramm00}, Smirnov \cite{MR1851632}, and Lawler, Schramm, and Werner \cite{MR1879850,MR1879851,MR1899232}. 
  This methodology is very specific to planar graphs, and does not give any indication of how these problems should be approached in higher-dimensional examples.


In particular, slightly supercritical percolation 
on $\Z^d$ remains poorly understood  even when $d$ is very large and all other regimes are now understood rather thoroughly. Highlights of the literature regarding the other regimes include 
\cite{MR852458,aizenman1987sharpness,duminil2015new} for the subcritical regime, \cite{MR1043524,MR762034,MR2748397,MR2551766} for the critical and slightly subcritical regimes, and \cite{MR1068308,MR1055419,MR1048927,MR594824} for the supercritical regime. See e.g.\ \cite{heydenreich2015progress,grimmett2010percolation,MR2241754} for overviews of this literature and of open problems in high dimensional percolation, and \cite{duminil2019upper} for some interesting recent partial progress on slightly supercritical percolation.  Let us also mention that a good understanding of slightly supercritical percolation appears to be a prerequisite to the solution of several important open problems regarding invasion percolation and minimal spanning forests, see \cite[Section 16.1]{heydenreich2015progress} and references therein.




The primary purpose of this series of two papers is to study slightly supercritical percolation in the `infinite-dimensional' setting of \emph{nonamenable} (quasi-)transitive  graphs. 
Here, we recall that a connected, locally finite graph is said to be \textbf{nonamenable} if its \textbf{Cheeger constant} 
\[
\Phi(G) = \inf \biggl\{\frac{|\partial_E W|}{\sum_{w\in W} \deg(w)}: W \text{ a finite set of vertices}\biggr\}
\]
is positive, where $\partial_E W$ denotes the set of edges with one endpoint in $W$ and one endpoint not in $W$; $G$ is said to be \textbf{amenable} if it is not nonamenable, i.e., if its Cheeger constant is zero. Background on percolation in the nonamenable context may be found in e.g.\ \cite{LP:book}. 
%
%
%
We prove our results under the additional hypothesis  that $G$ satisfies the \emph{$L^2$ boundedness condition}, which was introduced in \cite{1804.10191} and studied further in \cite{hutchcroft20192}. Let us now briefly introduce this condition. Given a countable graph $G=(V,E)$, we write $T_p(u,v)=\bP_p(u \leftrightarrow v)$ for the \textbf{two-point matrix}, and define 
\[
p_{2\to 2}=p_{2\to2}(G)=\sup\Bigl\{p\in [0,1]: \|T_p\|_{2\to 2} < \infty\Bigr\},
\]
where we recall that if $M \in [0,\infty]^{V^2}$ is a $V$-indexed matrix with non-negative entries then the $L^2(V) \to L^2(V)$ operator norm $\|M\|_{2\to2} \in [0,\infty]$ is defined by
\[
\|M\|_{2\to 2} = \sup\Bigl\{\|Mf\|_2 : f \in L^2(V),\, \|f\|_2 =1\Bigr\}.
\]
We say that $G$ satisfies the \emph{$L^2$ boundedness condition} if $p_c(G)<p_{2\to2}(G)$. This condition is conjectured to hold for every connected, locally finite, nonamenable quasi-transitive graph \cite[Conjecture 1.3]{hutchcroft20192}, and is now known to hold for several classes of examples, including Gromov hyperbolic graphs \cite{1804.10191}, highly nonamenable graphs \cite{MR1756965,MR1833805,MR3005730}, and graphs admitting a quasi-transitive nonunimodular  subgroup of automorphisms \cite{Hutchcroftnonunimodularperc}. In particular, it can be deduced by the methods of \cite{MR1756965} that every nonamenable, finitely generated group has a Cayley graph for which $p_c<p_{2\to 2}$. (On the other hand, we always have that $p_c=p_{2\to 2}$ in the amenable case.) See \cite{hutchcroft20192} for an overview. See also \cite{1808.08940,BLPS99b} and references therein for an overview of what is known regarding critical and near-critical percolation on general nonamenable transitive graphs without this assumption.


The main results of this paper apply the $L^2$ boundedness condition to establish a very precise understanding of the distribution of \emph{finite} clusters in critical and near critical percolation. In a forthcoming pair of sequel to this paper \cite{slightlysupercritical2,slightlysupercritical3}, we apply these results to study the large-scale geometry of \emph{infinite} clusters in slightly supercritical percolation. 
All of our results regarding slightly supercritical percolation are new when the graph in question is not a tree. 


The results of both papers build upon the methods of our recent work with Hermon \cite{HermonHutchcroftSupercritical}, which established related, non-quantitative results for supercritical percolation on nonamenable transitive graphs (that do not necessarily satisfy the $L^2$ boundedness condition). Making these arguments quantitative in a sharp way in order to get the correct behaviour as $p \downarrow p_c$ is a surprisingly delicate matter, and our proofs are, unfortunately, substantially more technical than those of \cite{HermonHutchcroftSupercritical}. 


Besides the intrinsic interest of our results, we are also hopeful that some of the tools we develop will be useful for approaching the high-dimensional Euclidean case; some perspectives on the remaining challenges in this case are presented in \cref{subsec:Euclidean}. It would also be very interesting (and seemingly highly non-trivial) to extend our methods to other infinite-dimensional settings, such as hypercubes or expander graphs (which are finite analogues of nonamenable graphs). Critical and slightly subcritical percolation on these graphs has been studied in many works, surveyed in \cite{MR3152019}, the highlights of which include \cite{MR2155704,MR2165583,MR2260845,MR3612867,hulshof2019slightly}. (The analogous results for the \emph{complete graph} are classical, see \cite{MR1864966} and references therein.)

\subsection{Statement of results}
\label{subsec:intro_finite}


We now state our results concerning the distribution of finite clusters in near critical percolation. While the supercritical aspects of these results are the most novel, it seems that they also improve slightly upon the best existing estimates for slightly subcritical percolation. We write $K_v$ for the cluster of $v$ and $|K_v|$ for the number of vertices it contains.


\begin{thm}[Volume of finite clusters] Let $G=(V,E)$ be a connected, locally finite, quasi-transitive graph such that $p_c(G)<p_{2\to 2}(G)$. Then there exists a constant $\delta=\delta(G)>0$ such that
\begin{align}
\bP_p\bigl(n \leq |K_v| < \infty\bigr) &\asymp n^{-1/2} \exp\left[ -\Theta \!\left( |p-p_c|^2 n\right) \right]
\label{eq:thmvolume}
\end{align}
for every $n \geq 1$, $v\in V$, and $p\in (p_c-\delta,p_c+\delta)$, where all implicit constants depend only on $G$.
\label{thm:main_volume}
\end{thm}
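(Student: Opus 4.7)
The plan is to establish the two-sided bound separately in three parameter regimes: critical ($p = p_c$), slightly subcritical, and slightly supercritical, proving upper and lower bounds in each. At $p = p_c$, the hypothesis $p_c < p_{2\to 2}$ implies the open triangle condition (see e.g.\ \cite{1804.10191}), and the classical Aizenman--Newman tree-graph inequalities then yield the critical upper bound $\bP_{p_c}(|K_v| \ge n) \le Cn^{-1/2}$. The matching critical lower bound follows from a Paley--Zygmund/second moment argument applied to $|K_v \cap B(v,r)|$ for an appropriately chosen radius $r$: the $L^2$ boundedness condition gives $\bE_{p_c} |K_v \cap B(v,r)|^2 \le \|T_{p_c}\|_{2\to 2}^2 |B(v,r)|$, while the first moment scales linearly in $|B(v,r)|$, so a comparison of these two quantities and optimization in $r$ produces the bound.

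For the slightly subcritical upper bound I would use the mean-field susceptibility asymptotic $\chi(p) \asymp (p_c - p)^{-1}$, which is known to hold under $L^2$ boundedness via the operator-theoretic methods of \cite{hutchcroft20192}. Combining this with tree-graph style inequalities gives sharp factorial bounds on higher moments $\bE_p |K_v|^k$ in terms of powers of $\chi(p)$, and Markov's inequality after optimization in $k$ then produces the exponential upper bound $\bP_p(|K_v| \ge n) \le Cn^{-1/2} \exp[-c(p_c-p)^2 n]$. The slightly supercritical upper bound is the hardest part, and is where the work of \cite{HermonHutchcroftSupercritical} becomes essential: I would aim to sharpen their coupling so that the law of $K_v$ conditional on $\{|K_v|<\infty\}$ at slightly supercritical $p$ is stochastically dominated by the law of $K_v$ at some dual subcritical parameter $\hat{p}(p)$ with the \emph{quantitative} matching $p_c - \hat{p}(p) \asymp p - p_c$, which reduces the supercritical upper bound to the subcritical one.

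For the matching lower bounds on both sides of $p_c$ I would combine the critical lower bound with a change-of-measure/tilting argument. Writing $\bP_p$ in terms of $\bP_{p_c}$ via the Radon--Nikodym derivative and restricting to clusters with typical numbers of open edges and closed boundary edges, the leading $(p-p_c)$-order term in $\log(d\bP_p/d\bP_{p_c})$ vanishes on average at criticality (because $p_c$ makes this log-likelihood stationary conditional on the cluster having size $n$), leaving a contribution of order $(p-p_c)^2 n$ that exactly matches the target exponential rate; the $n^{-1/2}$ prefactor is inherited from the critical lower bound. The main obstacle will be the quantitative supercritical coupling: the Hermon--Hutchcroft construction builds a dominating subcritical cluster via iterative resampling of pivotal edges, but their bookkeeping is non-quantitative. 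Making it sharp requires tracking the density of essential edges in a finite slightly supercritical cluster as a function of $p - p_c$, and this is where the $L^2$ boundedness hypothesis should come in to force the matching $p_c - \hat{p}(p) \asymp p - p_c$, accounting for the substantial technical overhead alluded to in the introduction.
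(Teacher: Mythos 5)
Your decomposition into critical, slightly subcritical, and slightly supercritical regimes, with separate upper and lower bounds in each, is the same global structure the paper uses, and the critical and slightly subcritical pieces of your plan (Aizenman--Newman tree-graph inequalities plus a Paley--Zygmund second-moment argument at criticality, and tree-graph factorial moment bounds together with Markov/optimization on the subcritical side) closely track the paper's \cref{prop:subcritical_upper}. Your lower-bound idea is also in the right spirit: the paper (\cref{lem:volume_lower}) couples $\omega_{p_c}$ to $\omega_p$ via the standard monotone coupling and uses FKG to show that, conditional on a critical cluster of size $\Theta(n)$, the probability it stays finite after sprinkling is at least $\exp[-O((p-p_c)^2 n)]$. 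Your change-of-measure/tilting heuristic is morally equivalent but, as written, is not yet a proof: you need to control the constraint that the cluster stays \emph{finite} under the supercritical measure, and the paper's coupling does this cleanly with the FKG inequality and the bound $\theta^*(p) = O(p-p_c)$, whereas your Radon--Nikodym computation would require concentration of the number of open and boundary edges that you have not established.

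The genuine gap is in your slightly supercritical \emph{upper} bound, which is precisely the hardest part. You propose to construct a dual subcritical parameter $\hat p(p)$ with $p_c - \hat p(p) \asymp p - p_c$ such that $K_v$ conditioned on finiteness at $p$ is stochastically dominated by $K_v$ at $\hat p$. The paper explicitly rules out this route: its Footnote 1 observes that the exact duality between finite supercritical and subcritical clusters ``is very specific to trees, and these arguments do not generalize to other nonamenable transitive graphs.'' For a general nonamenable quasi-transitive graph no such dual parameter exists, because (unlike on a tree) the number of closed boundary edges of a cluster is not a deterministic function of the number of its open edges, so the conditional law of a finite supercritical cluster is not of Bernoulli form. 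Your description of \cite{HermonHutchcroftSupercritical} as building ``a dominating subcritical cluster via iterative resampling of pivotal edges'' is also a misreading: that paper already works with the Russo-formula decomposition
\begin{equation*}
\frac{d}{dp}\bE_{p,n}\left[F(K_v)\right]=\bU_{p,n}\left[F(K_v)\right]-\bD_{p,n}\left[F(K_v)\right],
\end{equation*}
not a coupling to a subcritical model. The paper's actual supercritical upper bound (\cref{prop:vol_supercritical_upper}) combines three ingredients with no subcritical duality anywhere: (i) a quantitative lower bound $\bD_{p,n}[F] \succeq (p-p_c)\,\bE_{p,n}[|K_v| F(K_v)]$ proved via the BK and Cauchy--Schwarz inequalities and the operator norm $\|T_p\|_{2\to 2}$ (\cref{prop:NegativeTermQuant}); (ii) a sharp tail estimate on ``skinny'' clusters (\cref{lem:SkinnyRadius}) proved by an exploration/stopping-time argument; and (iii) an inductive analysis of a family of multivariate generating functions in the cluster volume $E_v$ and the bridge count $\Bridges(v,x_1,\dots,x_k;K_v)$, satisfying the recursive differential inequality of \cref{lem:differential_recursion}, to bound $\bU_{p,n}$. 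That machinery is precisely what replaces the non-existent duality, and without something of comparable strength your plan does not close.
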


Here and below, we write $\asymp$, $\succeq$, and $\preceq$  to denote equalities and inequalities that hold up to positive multiplicative constants depending only on the graph $G$. Thus, for example, ``$f(n) \asymp g(n)$ for every $n\geq 1$'' means that there exist positive constants $c$ and $C$ such that $cg(n) \leq f(n) \leq Cg(n)$  for every $n\geq 1$. We use Landau's asymptotic notation similarly, so that, for example, $f(n)=\Theta(g(n))$  if and only if $f \asymp g$, and $f(n) \preceq g(n)$ if and only if $f(n) = O(g(n))$. In particular, \cref{thm:main_volume} is equivalent to the assertion that there exist positive constants $c_1$, $c_2$, $C_1$, $C_2$, and $\delta$ such that
\[
c_1 n^{-1/2} \exp\left[ -C_1 |p-p_c|^2 n\right] \leq \bP_p\bigl(n \leq |K_v| < \infty\bigr) \leq C_2 n^{-1/2} \exp\left[ -c_2 |p-p_c|^2 n\right]
\]
for every $v\in V$, $p\in (p_c-\delta,p_c+\delta)$, and $n\geq 1$.

\medskip

Our next theorem establishes a similar result for the \emph{radius} of a finite supercritical cluster.
We write $\Rad_\mathrm{int}(K_v)$ and $\Rad_\mathrm{ext}(K_v)$ for the intrinsic and extrinsic radii of $K_v$, that is, the maximum distance from $v$ to another point of $K_v$ in the graph metric on $K_v$ and in the graph metric on $G$ respectively. Note that we trivially have $\Rad_\mathrm{ext}(K_v)\leq \Rad_\mathrm{int}(K_v)$.

\begin{thm}[Radii of finite clusters] Let $G=(V,E)$ be a connected, locally finite, quasi-transitive graph such that $p_c(G)<p_{2\to 2}(G)$. Then there exists a constant $\delta=\delta(G)>0$ such that
\begin{align}
\bP_p\bigl(r \leq \Rad_\mathrm{int}(K_v) < \infty\bigr) &\asymp r^{-1} \exp\left[ -\Theta \Bigl( |p-p_c| r\Bigr) \right]
\label{eq:thmintrinsic}
 \intertext{and}
\bP_p\bigl(r \leq \Rad_\mathrm{ext}(K_v) < \infty\bigr) &\asymp r^{-1} \exp\left[ -\Theta \Bigl( |p-p_c| r\Bigr) \right]
\label{eq:thmextrinsic}
\end{align}
for every $r \geq 1$, $v\in V$, and $p\in (p_c-\delta,p_c+\delta)$, where all implicit constants depend only on $G$.
\label{thm:main_radius}
\end{thm}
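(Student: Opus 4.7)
I would prove the intrinsic radius statement first and deduce the extrinsic one from it using the trivial inequality $\Rad_\mathrm{ext}(K_v) \leq \Rad_\mathrm{int}(K_v)$: the upper bound transfers automatically from intrinsic to extrinsic, while for the lower bound it is enough to prove the extrinsic version, which implies the intrinsic one. Throughout, I mimic the strategy that establishes the volume theorem (\cref{thm:main_volume}), exploiting the mean-field relation $|K| \asymp \Rad^2$ between the exponents governing volume and radius, which matches the exponents in the two theorems ($|p-p_c|^2 n$ versus $|p-p_c| r$) under the substitution $n \asymp r^2$.

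For the \emph{upper bound} $\bP_p(r \leq \Rad_\mathrm{int}(K_v) < \infty) \preceq r^{-1} \exp(-c|p-p_c|r)$, I would first invoke the critical baseline $\bP_{p_c}(\Rad_\mathrm{int}(K_v) \geq r) \asymp r^{-1}$, known from the $L^2$ theory of \cite{1804.10191,hutchcroft20192}, and then extract the exponential tilt $e^{-c|p-p_c|r}$. In the slightly subcritical regime, since $\{\Rad_\mathrm{int} \geq r\}$ is monotone, an intrinsic-distance adaptation of Menshikov/Aizenman--Barsky sharpness together with finiteness of $\|T_p\|_{2\to 2}$ near $p_c$ yields the bound. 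In the slightly supercritical regime the event is non-monotone and genuinely delicate: here I would adapt the supercritical coupling of \cite{HermonHutchcroftSupercritical} between the finite cluster at $p$ and a slightly subcritical configuration, keeping careful quantitative track of the tilt parameter so that the rate $|p-p_c|$, rather than $|p-p_c|^2$, appears in the exponent.

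For the \emph{lower bound} $\bP_p(r \leq \Rad_\mathrm{ext}(K_v) < \infty) \succeq r^{-1} \exp(-C|p-p_c|r)$, I would split into two regimes. When $|p-p_c|r \preceq 1$ the required bound reduces to the polynomial factor $r^{-1}$; this follows from the critical lower bound $\bP_{p_c}(\Rad_\mathrm{ext}(K_v) \geq r) \succeq r^{-1}$ together with a perturbative continuity argument and, in the supercritical case, a finiteness penalty extracted from \cref{thm:main_volume} applied at scale $n \asymp r^2$ (which costs $\exp(-\Theta(|p-p_c|^2 r^2))$, of constant order in this regime). When $|p-p_c|r \succeq 1$ we are in the exponentially decaying regime, where I would construct the event directly by forcing an open intrinsic geodesic of length $r$ at cost $\exp(-\Theta(|p-p_c|r))$ via subcritical theory, followed by a finiteness requirement at the far endpoint which is essentially free at this rate by \cref{thm:main_volume}.

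The main obstacle, as in the volume theorem but sharper, is the slightly supercritical upper bound. The event $\{r \leq \Rad_\mathrm{int}(K_v) < \infty\}$ is non-monotone, so BK-type inequalities do not apply directly, and simply decomposing on the volume via \cref{thm:main_volume} loses a factor in the exponent (giving $|p-p_c|^2 r^2$ rather than $|p-p_c|r$ when $|p-p_c|r \preceq 1$). The resolution requires an intrinsic-distance analogue of the supercritical analysis used for the volume bound together with sharp control of how a long intrinsic arm interacts with the finiteness constraint; in particular, one must show that conditioning on $\Rad_\mathrm{int}(K_v) \geq r$ does not disproportionately boost $\bP_p(|K_v|=\infty)$, so that the correct exponential rate $|p-p_c|$ is preserved rather than being overwhelmed by the much larger density of the infinite cluster.
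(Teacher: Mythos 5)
Your overall reduction---prove the intrinsic upper bound and transfer to extrinsic by the trivial inequality $\Rad_\mathrm{ext}\leq\Rad_\mathrm{int}$, then prove the extrinsic lower bound and transfer back---is exactly the paper's architecture, and your identification of the supercritical intrinsic upper bound as the crux is right. Your lower-bound sketch is also close in spirit to \cref{prop:radius_lower_bounds}: one couples $(p_c+\eps)$-percolation against $(p_c-\eps)$-percolation, conditions the subcritical cluster to have extrinsic radius $\geq r$ \emph{and} volume $\preceq\eps^{-1}r$ (this volume bound is what makes the finiteness cost come out at rate $\eps r$ rather than being ``free''), and then uses the $L^2$ condition via \cite[Prop.~3.2]{hutchcroft20192} to pass between intrinsic and extrinsic radii.

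The genuine gap is in your plan for the supercritical upper bound. You propose to ``adapt the supercritical coupling of \cite{HermonHutchcroftSupercritical} between the finite cluster at $p$ and a slightly subcritical configuration.'' No such coupling exists in that paper or anywhere else for general nonamenable graphs: a supercritical--subcritical duality that controls the law of a \emph{finite} supercritical cluster from above is a tree/planar phenomenon, which the paper explicitly flags as non-generalizable (footnote 1 of the introduction). What the paper actually does (\cref{prop:int_rad_supercritical_upper}) is a quantitative differential-inequality argument: Russo's formula decomposes $\frac{d}{dp}\bP_{p,n}(R_v\geq r)$ into $\bU_{p,n}-\bD_{p,n}$, the positive part is bounded by $(r/p_c)\bP_{p,n}(R_v\geq r)$ using that a pivotal edge for $\{R_v\geq r\}$ must lie on every length-$r$ intrinsic geodesic, and the negative part is lower bounded via \cref{prop:NegativeTermQuant} by $c(p-p_c)\bE_{p,n}[E_v\mathbbm{1}(R_v\geq r)]$. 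The crucial step---showing that this negative term dominates the positive term up to a controllable error---requires \cref{lem:SkinnyRadius}, the quantitative skinny-clusters bound, which says a cluster with radius $r$ but volume $\ll(p-p_c)^{-1}r$ is exponentially unlikely. This estimate is the key tool you are missing; it is also what rescues the subcritical upper bound (\cref{prop:subcritical_upper} decomposes on whether $|K_v|\geq|p-p_c|^{-1}r$ and uses skinny clusters for the other case, not a Menshikov-style argument). Without the skinny-clusters input and the resulting closed differential inequality, you correctly identify that naive applications of \cref{thm:main_volume} give $|p-p_c|^2r^2$ in the exponent, but you do not supply a mechanism that actually yields the sharp rate $|p-p_c|r$.
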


The parts of these results concerning \emph{critical} percolation were already known, and are applied as a component of the proof. Indeed, a connected, locally finite, quasi-transitive graph $G$ is said to satisfying the \emph{triangle condition} if
\[\nabla_{p_c}(v) := \sum_{u,w \in V} T_{p_c}(v,u)T_{p_c}(u,w)T_{p_c}(w,v) < \infty
\]
for every $v\in V$. The triangle condition was introduced by Aizenman and Newman \cite{MR762034} and proven to hold on $\Z^d$ with $d$ large in the groundbreaking work of Hara and Slade \cite{MR1043524}. It is conjectured to hold if and only if $d>6$, and is now known to hold for all $d \geq 11$ \cite{fitzner2015nearest}. It is known that if a connected, locally finite, quasi-transitive graph $G$ satisfies the triangle condition then 
\begin{align}
\bP_{p_c}( |K_v| \geq n) &\asymp n^{-1/2} &&\qquad \text{ for every $n\geq 1$ and $v\in V$, and that}
\label{eq:volume_critical}
 \\
\bP_{p_c}(\intrad(K_v) \geq r ) &\preceq r^{-1} &&\qquad \text{ for every $r\geq 1$ and $v\in V$,}
\label{eq:int_critical}
\intertext{
so that, in particular, every cluster is finite $\bP_{p_c}$-almost surely. Note that the triangle condition is equivalent to the assertion that $T^3_{p_c}(v,v)< \infty$  for every $v \in V$ (powers of non-negative infinite matrices indexed by $V$ always being well-defined as elements of $[0,\infty]^{V\times V}$), and is therefore implied by the $L^2$  boundedness condition since $T^3_p(v,v) \leq \|T_p^3\|_{2\to 2} \leq \|T_p\|^3_{2\to 2}$.
 The upper and lower bounds of \eqref{eq:volume_critical} follow from the work of Aizenman and Newman \cite{MR762034} and Aizenman and Barsky \cite{aizenman1987sharpness} respectively, while \eqref{eq:int_critical} follows from the work of Kozma and Nachmias \cite{MR2551766}. A simple proof of the complementary lower bound $\bP_{p_c}( \intrad(K_v)\geq r) \succeq r^{-1}$, which holds on every connected, locally finite, quasi-transitive graph, is given in \cref{lem:subcritical_radius_lower}. Moreover, in \cite{hutchcroft20192} it is shown that the $L^2$ boundedness condition allows one to compare intrinsic and extrinsic distances, which allows one to prove in particular that
} 
\bP_{p_c}( \Rad_\mathrm{ext}(K_v) \geq r ) &\asymp r^{-1} &&\qquad \text{ for every $r\geq 1$ and $v\in V$.}
\label{eq:ext_critical}
\end{align}
for every connected, locally finite, quasi-transitive graph $G$ satisfying the $L^2$ boundedness condition. (On the other hand, Kozma and Nachmias \cite{MR2748397} proved that $\bP_{p_c}( \Rad_\mathrm{ext}(K_v) \geq r ) \asymp r^{-2}$ for percolation on $\Z^d$ with $d$ large. The disparity between these two results is related to the fact that random walk is diffusive on $\Z^d$ and ballistic on nonamenable graphs.)

Let $E(K_v)$ be the set of edges that \emph{touch} (i.e., have at least one endpoint in) $K_v$, and define
\begin{equation}
\label{eq:zeta_def}
\zeta(p) = -\limsup_{n\to \infty}  \frac{1}{n} \log \bP_p\bigl(n \leq |E(K_v)| < \infty\bigr)
\end{equation}
to be the exponential rate of decay of the probability that $v$ belongs to a large finite cluster (which is easily seen not to depend on the choice of $v$).
 It is a consequence of the \emph{sharpness of the phase transition} that $\zeta(p)>0$ for \emph{every} connected, locally finite, quasi-transitive graph $G$ and every $0\leq p<p_c$. This was first proven by Aizenman and Barsky \cite{aizenman1987sharpness} and Aizenman and Newman \cite{MR762034} (see also the closely related work of Menshikov \cite{MR852458}), and several alternative proofs are now available \cite{duminil2015new,MR3898174,1901.10363}. On the other hand, for \emph{supercritical} percolation on quasi-transitive graphs, it is shown in \cite{HermonHutchcroftSupercritical} that $p_c<1$ and $\zeta(p)>0$ for \emph{some} $p_c < p < 1$ if and only if $p_c<1$ and $\zeta(p)>0$ for \emph{every} $p_c < p < 1$, if and only if $G$ is nonamenable. (In contrast, finite supercritical clusters in $\Z^d$ have stretched-exponential volume tails \cite{MR1068308,MR1055419}.) Thus, for connected, locally finite, nonamenable quasi-transitive graphs, we have that $\zeta(p)>0$ if and only if $p \neq p_c$. (See \cite{MR4303886} for counterexamples regarding extensions of this result to dependent percolation models.) Note however that these arguments do not give any quantitative control on the manner in which $\zeta(p) \to 0$ as $p \to p_c$. \cref{thm:main_volume} provides such a quantitative understanding, and yields in particular the following immediate corollary.

\begin{corollary}
 Let $G=(V,E)$ be a connected, locally finite, quasi-transitive graph such that $p_c(G)<p_{2\to 2}(G)$. Then there exists $\delta>0$ such that $\zeta(p) \asymp |p-p_c|^2$ for every $p\in (p_c-\delta,p_c+\delta)$.
\end{corollary}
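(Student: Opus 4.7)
The plan is to deduce this corollary directly from \cref{thm:main_volume} by converting the volume tail estimate from vertex counts to edge counts and then extracting the exponential rate appearing in the definition \eqref{eq:zeta_def} of $\zeta(p)$.

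First I would use that $G$, being locally finite and quasi-transitive, has a uniform degree bound $M=M(G)<\infty$. Since $K_v$ is connected and every edge of $E(K_v)$ has at least one endpoint in $K_v$, this yields the deterministic inequalities $|K_v|-1\leq |E(K_v)| \leq M\,|K_v|$. In particular the events $\{|K_v|<\infty\}$ and $\{|E(K_v)|<\infty\}$ coincide, and for every $n\geq 1$ one has the nested inclusions
\[
\bigl\{n+1\leq |K_v|<\infty\bigr\}\;\subseteq\;\bigl\{n\leq |E(K_v)|<\infty\bigr\}\;\subseteq\;\bigl\{n/M\leq |K_v|<\infty\bigr\}.
\]

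Next I would apply \cref{thm:main_volume} to both sides of this sandwich for $p$ in the window $(p_c-\delta,p_c+\delta)$ provided by that theorem. Since $n^{-1/2}$, $(n/M)^{-1/2}$ and $(n+1)^{-1/2}$ differ only by constant factors, and similarly for the exponents $|p-p_c|^2 n$, $|p-p_c|^2 n/M$ and $|p-p_c|^2(n+1)$, the bounds propagate to give
\[
\bP_p\bigl(n\leq |E(K_v)|<\infty\bigr) \asymp n^{-1/2}\exp\!\bigl[-\Theta\!\left(|p-p_c|^2 n\right)\bigr]
\]
for every $n\geq 1$ and $p\in(p_c-\delta,p_c+\delta)$, with implicit constants depending only on $G$.

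Finally I would take $-\tfrac{1}{n}\log$ of this two-sided bound: the polynomial prefactor contributes $\tfrac{1}{2n}\log n = o(1)$ as $n\to\infty$, while the exponential contributes $\Theta(|p-p_c|^2)$ uniformly in $n$. Sending $n\to\infty$ collapses the $\limsup$ in \eqref{eq:zeta_def} and gives $\zeta(p)\asymp |p-p_c|^2$ throughout $(p_c-\delta,p_c+\delta)$. There is no genuine obstacle here beyond bookkeeping the constants in the edge/vertex comparison; all of the real content of the corollary is already contained in \cref{thm:main_volume}.
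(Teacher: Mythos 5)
Your proof is correct and follows exactly the route the paper intends when it calls this an ``immediate corollary'' of \cref{thm:main_volume}: the vertex-to-edge comparison via the degree bound and a routine limit computation. The only cosmetic remark is that $n/M$ should be replaced by $\lceil n/M\rceil$ so that \cref{thm:main_volume} is invoked at an integer argument, but this changes nothing.
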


 Of course, \cref{thm:main_volume,thm:main_radius} tell us rather more than this:
they show us the precise manner in which the polynomial tail at $p_c$ is gradually transformed into the exponential tail away from $p_c$. In particular, 
 they make  the following natural heuristic picture precise: There is a \emph{scaling window} of order $|p-p_c|^{-1}$ such that within the scaling window percolation behaves in essentially the same way as critical percolation, whereas outside the scaling window the off-critical effects begin to become apparent.
Moreover, roughly speaking, these off-critical effects manifest themselves in a way that is proportional to how much larger our cluster is than a cluster that is at the edge of the scaling window (i.e., than a cluster that has radius $|p-p_c|^{-1}$ or volume $|p-p_c|^{-2}$). This intuitive picture will be an important motivation to many of our proofs: We will often prove estimates by separate analyses of the `inside-window' and `outside-window' cases. Note that the restriction to a neighbourhood of $p_c$ is necessary as $\zeta(p) \to \infty$ as $p\downarrow 0$ or $p \uparrow 1$.

\medskip

Finally, we note that \cref{thm:main_volume} also permits immediate computation of the slightly supercritical scaling exponents $\gamma'$ and $\Delta'$. It is believed that for every connected, locally finite, quasi-transitive graph $G=(V,E)$ there exist $\gamma,\gamma',\Delta,$ and $\Delta'$ such that if $k$ is a positive integer then
\begin{align}
\bE_p\left[ |K_v|^k \right] &\asymp_k |p-p_c|^{-\gamma-(k-1)\Delta \pm o_k(1)} &&\text{as $p \uparrow p_c$ and}\\
\bE_p\left[ |K_v|^k \mathbbm{1}(|K_v|<\infty)\right] &\asymp_k |p-p_c|^{-\gamma'-(k-1)\Delta' \pm o_k(1)} &&\text{as $p \downarrow p_c$,}
\end{align}
where the $k$ subscripts mean that the implicit constants may depend on $k$.
See \cite[Chapters 9 and 10]{grimmett2010percolation} for background on this conjecture. It is known that if $G$ satisfies the triangle condition then $\gamma$ and $\Delta$ are well-defined and take their \emph{mean-field} values of $1$ and $2$ respectively \cite{MR762034,MR923855} (see also \cite{1901.10363}). \cref{thm:main_volume} implies a similar result for $\gamma'$ and $\Delta'$ for graphs satisfying the $L^2$ boundedness condition.

\begin{corollary}
\label{cor:moments}
Let $G$ be a connected, locally finite, quasi-transitive graph such that $p_c(G)<p_{2\to 2}(G)$. Then there exist positive constants $\delta=\delta(G),c=c(G),$ and $C=C(G)$ such that
\[
c^k|p-p_c|^{-2k+1}k! \leq \bE_p\left[ |K_v|^k \mathbbm{1}(|K_v|<\infty) \right] \leq C^k|p-p_c|^{-2k+1}k!
\]
for every $k\geq 1$, $p\in (p_c-\delta,p_c)\cup(p_c,p_c+\delta)$, and $v\in V$. In particular, the exponents $\gamma=\gamma'=1$ and $\Delta=\Delta'=2$ are well-defined and take their mean-field values.
\end{corollary}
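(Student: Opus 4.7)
The plan is to apply \cref{thm:main_volume} to a tail-integration formula for the moment. Starting from
\[
\bE_p\left[|K_v|^k \mathbbm{1}(|K_v|<\infty)\right] = \sum_{n=1}^\infty \bigl(n^k - (n-1)^k\bigr)\, \bP_p\bigl(n \leq |K_v| < \infty\bigr),
\]
I use the elementary estimate $n^k-(n-1)^k \asymp k\,n^{k-1}$ uniformly in $n,k\ge 1$, together with the two-sided bounds from \cref{thm:main_volume} (which hold on both sides of $p_c$), to deduce that
\[
\bE_p\left[|K_v|^k \mathbbm{1}(|K_v|<\infty)\right] \;\asymp\; k \sum_{n=1}^\infty n^{k-3/2}\, \exp\!\left[-\Theta\bigl(|p-p_c|^2 n\bigr)\right],
\]
where the absolute constants hidden in $\asymp$ and $\Theta$ are independent of $k$ but may differ between the upper and lower sides.

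Writing $\lambda=|p-p_c|^2$, the standard Gamma integral gives
\[
\int_0^\infty n^{k-3/2}\,e^{-\alpha\lambda n}\,dn \;=\; (\alpha\lambda)^{-(k-1/2)}\,\Gamma(k-1/2)
\]
for any $\alpha>0$, and a routine sum-versus-integral comparison---using that the integrand is unimodal with exponential tails, peaking at some $n_\star$ of order $k/\lambda$---shows that the sum and integral agree up to a multiplicative factor trapped between two expressions of the form $C^k$. Combined with Stirling's formula, which gives $k\,\Gamma(k-1/2) \asymp k^{-1/2}\, k!$, and the trivial observation that $k^{-1/2}$ lies between $(1/2)^k$ and $1$ for every $k\ge 1$, this yields the desired two-sided bound
\[
c^k \,|p-p_c|^{-(2k-1)}\, k! \;\le\; \bE_p\left[|K_v|^k \mathbbm{1}(|K_v|<\infty)\right] \;\le\; C^k \,|p-p_c|^{-(2k-1)}\, k!.
\]

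The mean-field identification $\gamma=\gamma'=1$ and $\Delta=\Delta'=2$ then follows immediately by specialising to $k=1$ and $k=2$ and comparing with the definitions of these exponents. The argument is entirely computational once \cref{thm:main_volume} is in hand, so the only real obstacle is bookkeeping---ensuring that the constants produced by the sum-versus-integral comparison and by Stirling's formula both repackage cleanly into the $c^k$, $C^k$, $k!$ form demanded by the statement.
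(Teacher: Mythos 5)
Your proof is correct and matches the paper's implicit approach: the paper presents this as an ``immediate corollary'' of \cref{thm:main_volume} and leaves the tail-integration and Gamma-integral computation to the reader, which is exactly what you carry out. The only slight imprecision is the claim that $n^k-(n-1)^k \asymp k\,n^{k-1}$ \emph{uniformly} in $n,k\ge 1$ (it fails for $n$ small relative to $k$, e.g.\ $n=1$ gives ratio $1/k$), but the discrepancy is bounded by a $c^k$ factor and so is harmless for the statement's conclusion.
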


\subsection{About the proofs and organization}

Let us now outline the content of the rest of the paper, and in particular how the strategy we pursue here builds upon that of \cite{HermonHutchcroftSupercritical}.

\begin{enumerate}[leftmargin=*]
\item In \cref{sec:inside_window}, we prove some estimates on percolation `inside the scaling window', which in particular establish the upper bounds of \cref{thm:main_volume,thm:main_radius} in the cases $n = O(|p-p_c|^{-2})$ and $r = O(|p-p_c|^{-1})$ respectively. These estimates are straightforward applications of what is known about critical percolation under the triangle condition, and will be very useful in the remainder of our analysis.
\item In \cref{sec:outside_window}, we complete the proofs of the upper bounds of \cref{thm:main_volume,thm:main_radius}. This section takes up most of the paper, and is both the most technical and the most novel part of the paper. We pursue a similar strategy to that of \cite{HermonHutchcroftSupercritical}, but apply the assumption $p_c<p_{2\to 2}$ to obtain sharp quantitative versions of every estimate along the way.
\begin{enumerate}
\item In \cref{subsec:setup}, we recall some basic ideas and notation from \cite{HermonHutchcroftSupercritical} which allow us to express the derivative of, say, the truncated $k$th moment $\bE_{p,n}|K|^k := \bE_p |K|^k \mathbbm{1}(|K|\leq n)$ of the cluster volume as the difference of two terms: a `positive term' $\bU_{p,n}[|K|^k]$ which accounts for the effect of a finite cluster growing but remaining smaller than the truncation threshold $n$ and a `negative term' $-\bD_{p,n}[|K|^k]$ which accounts for the effect of finite clusters growing to break the truncation threshold $n$ (possibly by becoming infinite).

 Very roughly speaking, our goal in the remainder of the section will be i) to lower bound the absolute value of the negative term; ii) to write down an inequality of the form
\begin{align}
\bU_{p,n}\left[|K|^k\right] \leq \frac{1}{2}\bD_{p,n}\left[|K|^k\right] + \left(
\begin{array}l \text{something we can hope to bound without} \\ \text{yet understanding the truncated moments} \end{array}\right)
\nonumber
\end{align}
so that
\begin{multline}
\label{eq:overview_hope}
\frac{d}{dp}\bE_{p,n}\left[|K|^k\right] \\\leq -\frac{1}{2}\bD_{p,n}\left[|K|^k\right] + \left(
\begin{array}l \text{something we can hope to bound without} \\ \text{yet understanding the truncated moments} \end{array}\right);
\end{multline}
iii) to prove an upper bound on the second term on the right hand side of \eqref{eq:overview_hope} that is good enough to push through the final stage of the analysis. In the above formulation this would mean an upper bound of the form $k! C^k (p-p_c)^{-2k+1}$ for $p$ slightly larger than $p_c$; iv) to analyse the resulting differential inequality \eqref{eq:overview_hope} for $\bE_{p,n}|K|^k$.

\item In \cref{subsec:negative_term}, we carry out step (i) of this strategy, applying the $L^2$ boundedness condition to prove a lower bound on the magnitude of the negative term when $p$ is slightly larger than $p_c$, proving in particular that $\bD_{p,n}[|K|^k] \succeq (p-p_c) \bE_{p,n}[|K|^{k+1}]$ for $p$ slightly larger than $p_c$. This strengthens \cite[Proposition 2.4]{HermonHutchcroftSupercritical}, which established a similar but non-quantitative inequality for all transitive nonamenable graphs; the method of proof here is quite different.

\item In \cref{subsec:positive_term}, we carry out the remainder of the strategy but for the \emph{radius} rather than the volume, which is much easier. In this case, the analogue of the second term on the right hand side of \eqref{eq:overview_hope} is expressed in terms of the probability that the origin is in a large \emph{skinny} cluster, whose radius is large but whose volume is smaller than it ought to be given this large radius. An important part of the analysis is to obtain a sharp quantitative upper bound on  the probability of this event, which we will also apply many more times throughout the paper. This inequality can be thought of as a strengthening of \cite[Lemma 2.8]{HermonHutchcroftSupercritical} under the assumption that $\nabla_{p_c}<\infty$.

\item In \cref{subsec:positivetermI} and \cref{subsec:positivetermII}, we carry out the remainder of the strategy in the more difficult case of the volume. Here, the second term in \eqref{eq:overview_hope} is expressed in terms of clusters that satisfy a certain `higher-order' variation of the skinniness constraint considered above, related to the size of the tree of geodesics connecting $k+1$ points. In order to bound the resulting quantities, we introduce in \cref{subsec:positivetermI} a sequence of multivariate generating functions and prove that these generating functions satisfy a family of recursive differential inequalities relating the partial derivatives of $k$th function in the sequence to the value of the first $k$ functions in the sequence. In the following subsection \cref{subsec:positivetermII} we analyse this family of differential inequalities and then apply the resulting bounds to conclude the proof of the upper bounds of \cref{thm:main_volume} in the slightly supercritical case, i.e., to carry out step (iv) above. 

While these sections are based on similar high-level ideas to \cite[Section 2.3]{HermonHutchcroftSupercritical}, a much more delicate and technical implementation of these ideas was required to obtain sharp quantitative estimates. Indeed, while the methods developed in \cite[Section 2.3]{HermonHutchcroftSupercritical} are quantitative, they are not sharp, and eventually lead to estimates of the form $\zeta(p) \succeq (p-p_c)^4$ rather than $\zeta(p) \succeq (p-p_c)^2$ when fed the estimates of \cref{subsec:negative_term,subsec:positive_term} as inputs. In particular, while the family of differential inequalities between generating functions we derive here is closely related to \cite[Lemma 2.9]{HermonHutchcroftSupercritical}, the analysis it requires is completely different.
\end{enumerate}
\item In \cref{sec:completing} we complete the proofs of \cref{thm:main_volume,thm:main_radius} by proving lower bounds in the slightly supercritical regime as well as both upper and lower bounds in the critical and slightly subcritical regimes. While several of these estimates are fairly similar to things that are already known, a careful treatment is required to establish optimal quantitative forms of all the required estimates, and some of the results we prove here improve upon what was already known about slightly subcritical percolation under the triangle condition. Several of these sharp quantitative bounds are obtained with the help of the bounds on skinny clusters that are proven in \cref{subsec:positive_term}.
\item In \cref{subsec:Euclidean} we give some concluding remarks, including a  discussion of the challenges that remain to adapt our methods to the high-dimensional Euclidean case and some potential approaches to tackle them. 
\end{enumerate}

\noindent A glossary of notation is given at the end of the paper.

\begin{remark}
If the reader is familiar with \cite{HermonHutchcroftSupercritical}, they may notice that we do \emph{not} use one of the key ideas of that paper. In that paper, we wrote down a second formula for the derivative of the truncated $k$th moment in terms of the fluctuation of the number of open and closed edges in the cluster, the absolute value of which can be bounded via martingale methods. By comparing these bounds to those derived via Russo's formula as above, we were able to bound the truncated moments directly without actually analyzing the resulting differential inequalities. The reason we do not use this method here is that the bounds they yield are not sharp, but rather contain various unwanted polylogarithmic errors. In order to circumvent this issue we must actually analyze the differential inequalities we establish for the truncated moments.
\end{remark}

\section{Upper bounds inside the scaling window}
\label{sec:inside_window}

The purpose of this section is to prove the following lemma, which establishes the upper bounds of \cref{thm:main_volume,thm:main_radius} in the case that $n$ and $r$ are inside the scaling window and gives a weak bound for arbitrary $n$ and $r$. Both estimates are simple consequences of the results of \cite{MR2551766,sapozhnikov2010upper}, which establish the analogous bounds for \emph{critical} percolation.
Throughout the paper it will be important to establish bounds that hold not just for $G$ but also for \emph{arbitrary subgraphs} of $G$. This is done to circumvent non-monotonicity issues in inductive analyses of percolation, with similar arguments first appearing in \cite{MR2551766}. (For example, the family of partial differential inequalities on generating functions established in \cref{subsec:positivetermI} applies not to the generating function associated to $G$ but to the minimimum of the generating functions associated to the subgraphs of $G$, so that our inductive analysis of these generating functions will require uniform control over subgraphs.)

Let $G$ be a countable graph, let $H$ be a subgraph of $G$, let $v$ be a vertex of $H$, and consider Bernoulli bond percolation on $H$. We write $R_v=R_v(H)$ for the intrinsic radius of the cluster of $v$ in $H$, and write $E_v=E_v(H)$ for the number of edges of $H$ that \emph{touch} the cluster of $v$ in $H$, that is, have at least one endpoint in the cluster of $v$ in $H$. We write $a \vee b := \max\{a,b\}$ and $a \wedge b := \min\{a,b\}$. 

\begin{lemma}
\label{lem:inside_window}
Let $G=(V,E)$ be a connected, locally finite, quasi-transitive graph, and let $p_c=p_c(G)$. Suppose that $\nabla_{p_c}<\infty$.
Then there exists a positive constant $C$ such that the bounds
\[\bP_p^H(R_v \geq r) \leq C \left(\frac{1}{r} \vee (p-p_c) \right)
\qquad \text{ and } \qquad
\bP_p^H(E_v \geq n) \leq C \left(\frac{1}{n^{1/2}} \vee (p-p_c) \right)
\] 
hold for every $r,n\geq 1$, every $p\in [0,1]$, every subgraph $H$ of $G$, and every vertex $v$ of $H$.
\end{lemma}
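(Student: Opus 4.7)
The plan is to combine monotonicity in the Harris coupling with the critical tail bounds of Aizenman--Newman \cite{MR762034} and Kozma--Nachmias \cite{MR2551766} (which apply to any graph satisfying the triangle condition), and extend these bounds to the slightly supercritical range by isolating the contribution of infinite clusters. I would prove both estimates in parallel.

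\emph{Infinite-cluster contribution.} For any $H \subseteq G$ and any $p$, the Harris coupling gives $K_v^H \subseteq K_v^G$, and hence
\[
\bP_p^H(|K_v^H| = \infty) \le \bP_p^G(|K_v^G| = \infty) = \theta_G(p).
\]
Under the triangle condition, the Barsky--Aizenman upper bound yields $\theta_G(p) \le C(p-p_c)_+$ for $p$ in a neighbourhood of $p_c$, so this piece is already of the desired form and contributes $0$ when $p \le p_c$.

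\emph{Finite-cluster contribution, $p \le p_c$.} For the edge count, $E_v(\omega_p^H) \le E_v(\omega_{p_c}^G)$ holds in the Harris coupling, since $K_v^H \subseteq K_v^G$ and $E(H) \subseteq E(G)$; combined with Aizenman--Newman on $G$, this gives $\bP_p^H(E_v \ge n) \le C n^{-1/2}$. For the intrinsic radius, neither monotonicity in $H$ nor in $p$ is available (removing or adding edges can \emph{lengthen} or \emph{shorten} intrinsic paths within a surviving cluster), so I would instead apply the Kozma--Nachmias argument directly at level $p$ on $H$: it relies only on the triangle condition on the relevant graph at the relevant $p$ and is insensitive to (quasi-)transitivity, and since $\nabla_p^H \le \nabla_p^G \le \nabla_{p_c}^G < \infty$ for $p \le p_c$, one obtains $\bP_p^H(R_v \ge r) \le C/r$ with $C$ depending only on $\nabla_{p_c}^G$.

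\emph{Finite-cluster contribution, $p \in (p_c,p_c+\delta)$.} The target here is the uniform pair of bounds $\bP_p^H(n \le E_v^H < \infty) \le C n^{-1/2}$ and $\bP_p^H(r \le R_v^H < \infty) \le C/r$. The plan is to rerun the Aizenman--Newman and Kozma--Nachmias proofs with the \emph{truncated} two-point function $T_p^f(u,v) = \bP_p(u \leftrightarrow v,\, |K_u|<\infty)$ in place of $T_p$ throughout. Under the triangle condition the truncated triangle sum $\nabla_p^f$ is continuous in $p$ at $p_c$ (by dominated convergence using $T_p^f \le T_p$ and $\theta(p_c)=0$), and therefore uniformly bounded on $[p_c,p_c+\delta]$ for $\delta$ small; this is the only additional input needed for the critical arguments to go through on any subgraph $H$. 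Adding the finite- and infinite-cluster contributions then yields both displays of the lemma.

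The hard part will be this last step: verifying that every step of the Aizenman--Newman and Kozma--Nachmias proofs (in particular the recursive triangle-diagram estimates that use BK-type decompositions) remains valid when $T_p$ is replaced by $T_p^f$, and quantifying the near-critical stability of $\nabla_p^f$ with constants depending only on $\nabla_{p_c}^G$. While morally these are direct analogues of the critical arguments, the substitution must be checked term-by-term, and the uniformity of $\nabla_p^f$ across subgraphs $H \subseteq G$ must be argued from the corresponding uniformity of $\nabla_p^{f,G}$.
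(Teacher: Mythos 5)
The paper's proof of this lemma is much simpler than what you propose, and your strategy for the supercritical range has genuine gaps that are worth understanding.

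Your decomposition into infinite-cluster and finite-cluster pieces, and your treatment of the infinite-cluster piece via $\theta(p)\preceq (p-p_c)_+$, are fine. Your subcritical treatment is also roughly the paper's starting point: the paper invokes $\bP^H_p(R_v\geq r)\preceq r^{-1}$ and $\bE^H_p[\#B_\mathrm{int}(v,r)]\preceq r$ for all $0\leq p\leq p_c$ and all subgraphs $H$, citing Kozma--Nachmias, Sapozhnikov, and a section of the nonunimodularity paper for the subgraph-uniform versions (your remark that the Kozma--Nachmias argument "relies only on the triangle condition on the relevant graph" is a bit casual but points in the right direction).

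The divergence is in the supercritical extension. The paper does \emph{not} attempt to control a truncated triangle diagram and rerun the critical machinery at $p>p_c$. Instead it uses a one-line combinatorial observation and Russo's formula: if $R_v\geq r$ and closing $e$ drops $R_v$ below $r$, then $e$ lies on \emph{every} intrinsic geodesic of length $r$ from $v$, so there are at most $r$ such pivotal edges. Hence $\frac{d}{dp}\log\bP^H_p(R_v\geq r)\leq r/p_c$, which integrates to $\bP^H_p(R_v\geq r)\preceq r^{-1}e^{(p-p_c)r/p_c}$; since the left side is decreasing in $r$, one then optimizes over $\ell\leq r$ with $\ell\approx r\wedge(p-p_c)^{-1}$. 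An identical argument controls $\bE^H_p[\#B_\mathrm{int}(v,r)]$, and the $E_v$ bound follows from Markov plus a union bound against the radius tail. This requires no information about the supercritical regime whatsoever.

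By contrast, your proposal has two concrete problems. First, the dominated-convergence step for $\nabla_p^f$ does not close: you only have $T_p^f\leq T_p$, and $T_p$ \emph{increases} in $p$, so this does not give a $p$-independent dominating function. A genuine dominating function with finite triangle sum would need something like $\nabla_{p_c+\delta}<\infty$, which is strictly stronger than the hypothesis $\nabla_{p_c}<\infty$ under which the lemma is stated (and which must hold, since the lemma is also used in regimes where nothing beyond $\nabla_{p_c}<\infty$ is assumed). Second, even granting a uniformly bounded truncated triangle diagram, "rerunning" Aizenman--Newman and Kozma--Nachmias with $T_p^f$ is far from routine: the truncated events are not monotone, so the BK steps need to be re-justified, and the output $\bP_p^H(r\leq R_v<\infty)\preceq r^{-1}$ uniformly in $p>p_c$ is essentially the content of the paper's later Proposition 3.3 (proved with considerably more work, including the skinny-cluster estimates). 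You would be smuggling a substantial chunk of the paper's main result into an early lemma. The Russo-formula route avoids all of this.
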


We stress that in the statement and proof of this estimate, $p_c$ always refers to $p_c(G)$.
The proof will make use of
\emph{Russo's formula} \cite[Theorem 2.32]{grimmett2010percolation}, which states that if $X:\{0,1\}^{E} \to \R$ depends on at most finitely many edges then $\bE_p\left[ X(\omega)\right]$ is a polynomial in $p$ with derivative
\[
\frac{d}{dp}\bE_p\left[ X(\omega)\right] = \sum_{e\in E} \bE_p\left[ X(\omega^e)-X(\omega_e)\right] = \frac{1}{p}\sum_{e\in E} \bE_p\left[ \mathbbm{1}(\omega(e)=1) \left(X(\omega)-X(\omega_e)\right)\right]
\]
for every $p\in (0,1]$, where we let $\omega^e = \omega \cup \{e\}$ and $\omega_e = \omega \setminus \{e\}$.  
 We write $B_\mathrm{int}(v,n)$ for the intrinsic ball of radius $n$ around $v$ in $K_v$, and write $\partial B_\mathrm{int}(v,n) = B_\mathrm{int}(v,n) \setminus B_\mathrm{int}(v,n-1)$ for the set of vertices at intrinsic distance exactly $n$ from $v$.

\begin{proof}[Proof of \cref{lem:inside_window}]
Fix $H$ and $v$ and for each $r\geq 0$ let $B_\mathrm{int}(v,r)$ be the intrinsic ball of radius $r$ around $v$ in $H$, i.e., the set of vertices that can be reached from $v$ by open paths of length at most $r$ in $H$.
We know by the results of \cite{MR2551766,sapozhnikov2010upper} (see also \cite[Section 6]{Hutchcroftnonunimodularperc}) that 
\[
\bP^H_{p}(R_v \geq r) \preceq r^{-1}
\qquad
 \text{ and }
 \qquad
  \bE_{p}^H\left[ \# B_\mathrm{int}(v,r)\right]  \preceq r
\]
for every $0\leq p\leq p_c$.
Observe that, for each $r\geq 1$, if $K_v$ has intrinsic radius at least $r$ and $e$ is such that $K_v(\omega_e)$ does not have intrinsic radius at least $r$, then $e$ must lie on every intrinsic geodesic of length $r$ starting at $v$ in $K_v$. There are clearly at most $r$ such edges, and
  it follows from Russo's formula that
\[
\frac{d}{dp} \bP^H_{p}(R_v \geq r) \leq \frac{r}{p} \bP^H_{p}(R_v \geq r) \leq \frac{r}{p_c} \bP^H_{p}(R_v \geq r)
\]
for every $p_c \leq p \leq 1$ and $r\geq 1$.
 This inequality may be written equivalently as 
\[
\frac{d}{dp} \log \bP^H_{p}(R_v \geq r) \leq \frac{r}{p_c}.
\]
 Integrating this bound between $p$ and $p_c$ yields that
\begin{equation}
\label{eq:offcrit_onearm_simple}
\bP^H_{p}(R_v \geq r) \leq \bP^H_{p_c}(R_v \geq r) \exp\left[\frac{(p-p_c) r}{p_c}\right] \preceq \frac{1}{r} \exp\left[\frac{(p-p_c) r}{p_c}\right]
\end{equation}
for every $p_c \leq p \leq 1$ and $r\geq 1$. Since $\bP^H_{p}(R_v \geq r)$ is decreasing in $r$, it follows that 
\[
\bP^H_{p}(R_v \geq r) \preceq \min\left\{ \frac{1}{\ell} \exp\left[\frac{(p-p_c)}{p_c} \ell\right] : 1 \leq\ell \leq r\right\}
\]
for every $p_c \leq p \leq 1$ and $r\geq 1$. The claimed bound on the tail of the intrinsic radius follows by taking $\ell= r \wedge \lceil (p-p_c)^{-1}\rceil$.

Now, a similar argument to above yields that
\[
\frac{d}{dp} \log \bE_p^H\left[\# B_\mathrm{int}(v,r)\right] \leq \frac{r}{p_c}
\]
for every $p_c \leq p \leq 1$ and $r\geq 1$, and hence that
\begin{equation}
\label{eq:ballwindow}
\bE_p^H\left[\# B_\mathrm{int}(v,r)\right] \preceq r \exp\left[\frac{p-p_c}{p_c}r \right]
\end{equation}
for every  $p_c \leq p \leq 1$ and $r\geq 1$. It follows by the union bound and Markov's inequality that 
\[
\bP_p^H(E_v \geq n) \leq \frac{1}{n}\bE_p^H\left[\# B_\mathrm{int}(v,r)\right] + \bP_p^H(R_v \geq r) \preceq \frac{r}{n} \exp\left[\frac{p-p_c}{p_c}r \right] +  \left[\frac{1}{r}\vee (p-p_c)\right]
\]
for every $n,r \geq 1$. The claim follows by taking $r= \lceil n^{1/2} \wedge (p-p_c)^{-1} \rceil$.
\end{proof}

\section{Upper bounds outside the scaling window}
\label{sec:outside_window}

In this section we prove the upper bounds of \cref{thm:main_radius,thm:main_volume} in the case $p > p_c$. 

\subsection{Setting up the main differential inequalities}
\label{subsec:setup}

Most of the the work to prove \cref{thm:main_radius,thm:main_volume} will concern the case that $p>p_c$ is slightly supercritical and $n$ and $r$ are \emph{outside} the scaling window, so that either $n \gg |p-p_c|^{-2}$  or $r \gg |p-p_c|^{-1}$.                          
As discussed above, we  follow the basic strategy of \cite{HermonHutchcroftSupercritical}, but apply the assumption that $p_c<p_{2\to 2}$ to make the proof quantitative.
We begin by recalling some notation from \cite{HermonHutchcroftSupercritical}.
Let $G=(V,E)$ be a connected, locally finite, transitive, nonamenable graph, and let $v$ be a vertex of $G$. Let $K_v$ denote the cluster of $v$, and let $E_v = |E(K_v)|$ be the number of edges touching $K_v$. Define $\sH$ to be the set of all finite connected subgraphs of $G$, and let $\sH_v$ be the set of all finite connected subgraphs of $G$ containing $v$. Given a function $F: \sH_v \to \R$, we write
\[
\bE_{p,n}[F(K_v)] := \bE_p\left[ F(K_v)\mathbbm{1}(E_v \leq n)\right] \qquad \text{ and } \qquad \bE_{p,\infty}[F(K_v)] := \bE_p\left[ F(K_v)\mathbbm{1}(E_v < \infty)\right] 
\]
for every $p\in [0,1]$ and $n \geq 1$. 



 Given $F: \mathscr{H}_v \to \R$ and $n\geq 1$, Russo's formula allows us to express the derivative of the truncated expectation $\bE_{p,n}[F(K_v)]$, which is a polynomial in $p$, in terms of pivotal edges and obtain that
\begin{equation}
\frac{d}{dp}\bE_{p,n} \left[F(K_v)\right] = \mathbf{U}_{p,n}[F(K_v)]-\mathbf{D}_{p,n}[F(K_v)]
\end{equation}
where we write
\begin{align*}
\mathbf{U}_{p,n}\left[F(K_v)\right] &:= 
 \frac{1}{p} \sum_{e\in E} \bE_{p,n}\left[ \left(F\left[K_v\right]-F\left[K_v(\omega_e)\right]\right) \mathbbm{1}\bigl(\omega(e)=1\bigr) \right]
\intertext{and}
\mathbf{D}_{p,n}\left[F(K_v)\right] &:=  \frac{1}{1-p}\sum_{e\in E} \bE_p\left[ F(K_v) \mathbbm{1}\bigl(\omega(e)=0,\, E_v \leq n < E_v(\omega^e)\bigr) \right].
\end{align*}
See \cite[Section 2]{HermonHutchcroftSupercritical} for further details. Intuitively, in the $n\to \infty$ limit, the term $\mathbf{D}_{p,\infty}\left[F(K_v)\right]$ accounts for the effect of finite clusters becoming infinite, while the term $\mathbf{U}_{p,\infty}\left[F(K_v)\right]$ accounts for the effect of finite clusters growing while remaining finite. (Note however that the above formulas are only \emph{a priori} valid for finite $n$.)
Note that 
 $\bU_{p,n}[F(K_v)]$ is non-negative if $F$ is increasing and that $\bD_{p,n}[F(K_v)]$ is non-negative if $F$ is non-negative.  Note also that  $\bU_{p,n}[F(K_v)]$  and $\bD_{p,n}[F(K_v)]$ both depend linearly on the function $F$.

In order to prove \cref{thm:main_radius,thm:main_volume}, we will need to prove lower bounds on $\bD_{p,n}[F(K_v)]$ and upper bounds on $\bU_{p,n}[F(K_v)]$ for appropriate choices of $F$. The two quantities will often have roughly the same order, making the analysis of their difference  rather delicate.

\subsection{Bounding the negative term}
\label{subsec:negative_term}

In this section we prove a lower bound on $\bD_{p,n}[F(K_v)]$ for non-negative $F$.
In \cite[Proposition 2.1]{HermonHutchcroftSupercritical}, it is shown via an ineffective argument that if $G$ is transitive and nonamenable then for every $p_c<p_0 \leq 1$ there exists a positive constant $c_{p_0}$ such that
\[
\mathbf{D}_{p,n}\left[F(K_v)\right] \geq c_{p_0} \bE_{p,n}\left[ |K_v| \cdot F(K_v)\right]
\]
for every $p_0 \leq p \leq 1$ and every increasing function $F: \sH_v \to [0,\infty)$. A key ingredient to the proof of our main theorems is the following proposition, which allows us to take $c_{p_0}$ of order $(p_0-p_c)$ under the assumption that $p_c<p_{2\to 2}$. We write $\theta_*(p) = \inf_{v\in V} \bP_p(v \to \infty)$ and $\theta^*(p)=\sup_{v\in V} \bP_p(v \to \infty)$ and write $M$ for the maximum degree of $G$.

\begin{prop}
\label{prop:NegativeTermQuant}
Let $G$ be a countable graph. Then 
\begin{equation}
\label{eq:Dquantitative}
\mathbf{D}_{p,n}\left[F(K_v)\right] \geq \left[\frac{\theta_*(p)}{p^2(1-p)M \theta^*(p)\|T_p\|_{2\to 2}^2}\right] \theta_*(p) \bE_{p,n}\left[ |K_v| \cdot F(K_v)\right]
\end{equation}
for every non-negative function $F: \mathscr{H}_v \to [0,\infty)$,   every $n\geq 1$, and every $p\in [0,1)$. Consequently, if $G$ is connected, locally finite, and quasi-transitive with $p_c(G)<p_{2\to 2}(G)$, then there exist positive constants $\delta>0$ and $c>0$ such that
\begin{equation}
\label{eq:Dapprox}
\mathbf{D}_{p,n}\left[F(K_v)\right] \geq c(p-p_c) \bE_{p,n}\left[ |K_v| \cdot F(K_v)\right]
\end{equation}
for every non-negative function $F: \mathscr{H}_v \to [0,\infty)$, every $n\geq 1$, and every $p \in (p_c,p_c+\delta)$.
\end{prop}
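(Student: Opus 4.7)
The plan is to lower-bound $\mathbf{D}_{p,n}[F(K_v)]$ by counting closed pivotal edges leading from $K_v$ to infinite clusters in the complement of $K_v$, then apply a Paley--Zygmund-type second-moment argument using the $L^2$ boundedness condition to extract the quantitative prefactor.

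First, if $e = \{u,w\}$ is an edge with $u \in V(K_v)$ and $w \notin V(K_v)$, then $\omega(e) = 0$ is automatic by the definition of a cluster, and $e$ is a closed pivotal edge for $\{E_v \leq n\}$ whenever $w$ percolates in the restricted configuration on $G' := G[V \setminus V(K_v)]$. Letting $N_{v,\infty}$ count such edges,
\[
(1-p)\mathbf{D}_{p,n}[F(K_v)] \geq \bE_p\bigl[F(K_v)\mathbbm{1}_{E_v \leq n}\, N_{v,\infty}\bigr],
\]
and conditioning on $K_v$ (using that the configuration on $G'$ is conditionally independent Bernoulli-$p$ percolation) gives $\bE[N_{v,\infty}\mid K_v] = \sum_{u\in V(K_v)}\Theta_u(K_v)$, where $\Theta_u(H) := \sum_{w\sim u, w\notin V(H)}\theta_p^{G[V\setminus V(H)]}(w)$. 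A union bound over neighbors of $u$, combined with the standard monotonicity inequality for percolation under vertex deletion, then yields the per-vertex bound
\[
p\cdot \Theta_u(H)\;\geq\; \bP_p^{G[(V\setminus V(H))\cup\{u\}]}(u\leftrightarrow\infty)\;\geq\; \theta_*(p) - \sum_{u'\in V(H)\setminus\{u\}} T_p(u,u').
\]

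Second, the key technical step is to convert this per-vertex bound---which can be negative when $\|T_p\|_{2\to 2}$ exceeds $\theta_*(p)$, as is typical just above $p_c$---into a positive, linear-in-$|V(K_v)|$ lower bound via Paley--Zygmund. The strategy is to apply the second-moment method to a suitable random variable defined on the percolation of $G'$ (e.g., counting vertices in infinite clusters near a given $u$), whose first moment is controlled from below via the above per-vertex bound combined with the $L^2$ spectral estimate $\sum_{u,u'\in V(H)}T_p(u,u')=\langle\mathbbm{1}_{V(H)},T_p\mathbbm{1}_{V(H)}\rangle\leq \|T_p\|_{2\to 2}|V(H)|$, and whose second moment is bounded above by a further application of the spectral estimate together with the trivial pointwise bound $\leq \theta^*(p)$, giving something of the form $M\theta^*(p)\|T_p\|_{2\to 2}^2|V(H)|$. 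Paley--Zygmund then yields the claimed coefficient $\theta_*(p)^2/(p^2(1-p)M\theta^*(p)\|T_p\|_{2\to 2}^2)$ after summing over $u\in V(K_v)$, establishing \eqref{eq:Dquantitative}. The consequence \eqref{eq:Dapprox} then follows by combining \eqref{eq:Dquantitative} with the standard bound $\theta_*(p)\gtrsim \sqrt{p-p_c}$ valid in a neighborhood of $p_c$ (under the triangle condition implied by $p_c<p_{2\to 2}$), together with the boundedness of $\|T_p\|_{2\to 2}$ and $\theta^*(p)$ in such a neighborhood.

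The main obstacle is precisely this Paley--Zygmund step: the naïve first-moment estimate can be negative just above $p_c$, so the second-moment argument must be set up with some care to guarantee a positive linear-in-$|V(H)|$ lower bound with all constants traced to the correct form. The assumption $p_c<p_{2\to 2}$ enters essentially in controlling both moments by the $L^2$ spectral norm, and recovering the sharp $\theta_*^2/\|T_p\|_{2\to 2}^2$ factor requires a delicate balance between the geometric constraints coming from the pointwise monotonicity bound and the spectral constraints coming from the $L^2$ estimate.
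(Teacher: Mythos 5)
Your reduction from $\mathbf{D}_{p,n}[F(K_v)]$ to the conditional density of closed boundary edges leading to infinity off $K_v$ is essentially the same reduction the paper makes (there packaged as $\mathbf{D}_{p,n}[F(K_v)] \geq \bE_{p,n}[F(K_v)\Psi_p(K_v)]$, citing the earlier work). The genuine gap is in the core step: lower bounding the conditional expectation given $K_v=S$. Your per-vertex inclusion-exclusion bound $p\,\Theta_u(S) \geq \theta_*(p) - \sum_{u'\in S\setminus\{u\}}T_p(u,u')$ is a valid inequality, but summing over $u\in S$ and using the spectral bound $\langle\mathbbm{1}_S,T_p\mathbbm{1}_S\rangle\leq\|T_p\|_{2\to 2}|S|$ gives at best $\sum_u p\,\Theta_u(S)\geq\bigl(\theta_*(p)-\|T_p\|_{2\to 2}\bigr)|S|$, which is negative throughout the regime of interest since $\theta_*(p)\to 0$ as $p\downarrow p_c$ while $\|T_p\|_{2\to 2}$ is bounded away from zero there. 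You flag this and appeal to Paley--Zygmund, but the proposal neither identifies the random variable nor explains how its first moment would be positive; Paley--Zygmund needs a positive first-moment lower bound and a matching second-moment upper bound, and your only first-moment estimate is controlled from below by a negative quantity. Getting to the claimed coefficient $\theta_*^2/(p^2(1-p)M\theta^*\|T_p\|_{2\to 2}^2)$ requires a different starting point, not a more careful treatment of the same inclusion-exclusion inequality.

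The paper avoids the cancellation by running the argument in the opposite direction: for $u\in S$, the BK inequality gives the unconditional upper bound $\theta_*(p)\leq\bP_p(u\to\infty)\leq p\sum_{v\in S}T_p(u,v)\sum_{e\in E^\rightarrow_v}\mathbbm{1}(e^+\notin S)\bP_p(e^+\to\infty\text{ off }S)$, so summing over $u\in S$ yields $\theta_*(p)|S|\leq p(1-p)\langle T_p\mathbbm{1}_S, f\rangle$ for the boundary-to-infinity density $f$ supported on $S$, with no subtracted error term. Cauchy--Schwarz against $\|T_p\|_{2\to 2}$, the interpolation $\|f\|_2\leq\|f\|_1^{1/2}\|f\|_\infty^{1/2}$, and the pointwise bound $\|f\|_\infty\leq M\theta^*(p)/(1-p)$ then produce the lower bound on $\Psi_p(S)=\|f\|_1$ after rearranging. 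This delivers the second-moment flavour you were reaching for ($\theta_*^2/\|T_p\|_{2\to 2}^2$ and the $L^1$--$L^\infty$ balance), but arranged so it actually starts. A minor additional slip: you invoke $\theta_*(p)\gtrsim\sqrt{p-p_c}$, whereas the standard sharp bound (from sharpness of the phase transition) is $\theta_*(p)\geq c(p-p_c)$; your weaker form happens to give the right order for $\theta_*^2/\theta^*$ only because you pair it with the crude bound $\theta^*(p)=O(1)$ rather than the triangle-condition upper bound $\theta^*(p)=O(p-p_c)$.
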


The precise form of the argument given below was suggested to us by Antoine  Godin; a similar argument will appear in his forthcoming PhD thesis \cite{Antoine}. We thank him for sharing this argument with us, which substantially simplified our proof. 

\begin{remark}
Note that this is the only stage in our argument in which the $L^2$-boundedness condition (as opposed to the triangle condition) is used directly.
\end{remark}

The proof makes use of the notion of the BK inequality and the associated notion of the \emph{disjoint occurence} $A \circ B$ of two events $A$ and $B$; We refer the unfamiliar reader to \cite[Chapter 2.3]{grimmett2010percolation} for background.

\begin{proof}[Proof of \cref{prop:NegativeTermQuant}]
Let $G$ be a countable graph. For each vertex $v$ of $G$, let $E^\rightarrow_v$ denote the set of oriented edges $e$ of $G$ with $e^-=v$. 
We first claim that for each deterministic finite set of vertices $S \subseteq V$ we have that
\begin{equation}
\label{eq:fixedS1}
\Psi_p(S):=\frac{1}{1-p}\sum_{v \in S} \sum_{e \in E^\rightarrow_v} \mathbbm{1}(e^+ \notin S)\bP_p(e^+ \to \infty \text{ off $S$}) \\ \geq \left[\frac{\theta_*(p)}{p^2(1-p)M \theta^*(p)\|T_p\|_{2\to 2}^2}\right] \theta_*(p)|S|
\end{equation}
for every $0<p<p_{2\to 2}$. (Note that we have written the expression on the right in this way as the bracketed term is of constant order in cases of interest.) The deduction of \eqref{eq:Dquantitative} from \eqref{eq:fixedS1} is identical to the proof of \cite[Proposition 2.1]{HermonHutchcroftSupercritical} and is omitted.
Indeed, the proof of \cite[Proposition 2.1]{HermonHutchcroftSupercritical} shows more generally that
\[
\bD_{p,n} \left[ F(K_v) \right] \geq \bE_{p,n} \left[ F(K_v) \Psi_p(K_v)\right]
\]
for every $p\in [0,1)$, $n\geq 1$, and every non-negative $F:\sH_v \to [0,\infty)$.

Let $S$ be a deterministic finite set of vertices. 
 Let $\partial_E^\rightarrow S$ denote the set of oriented edges of $G$ with $e^- \in S$ and $e^+ \notin S$. Observe that for each $u\in S$ we have that
\[
\{|K_u| = \infty\} \subseteq  \bigcup_{e \in \partial_E^\rightarrow S} \{u \leftrightarrow e^-\} \circ \{ e \text{ open} \} 
\circ
\{ e^+ \to \infty \text{ off $S$}\}.
\]
Indeed, suppose that $u\in S$ is in an infinite cluster, and let $\gamma$ be an infinite simple open path starting at $u$. Since $S$ is finite, there is some last vertex $v$ of $S$ that is visited by $\gamma$. Let $e$ be the edge of $\partial_E^\rightarrow S$ that is crossed by $\gamma$ as it leaves $v$, which is necessarily open. Then the pieces of $\gamma$ before and after crossing $e$ are disjoint witnesses for the events $\{u \leftrightarrow e^-\}$ and $\{e^+ \to \infty \text{ off $S$}\}$, both of which are disjoint from the edge $e$.
Thus, applying the BK inequality and the union bound yields that
\[
\bP_p(u\to \infty) \leq p \sum_{v\in S} T_p(u,v) \sum_{e \in E^\rightarrow_v} \mathbbm{1}(e^+ \notin S) \bP_p(e^+ \to \infty \text{ off $S$})
\]
for every $u \in S$, where we write ``$e^+ \to \infty$ off $S$" to mean that there is an infinite open path starting at $e^+$ that does not visit any vertex of $S$.  Summing over $u$ we obtain that
\begin{equation}
\label{eq:Ssummed}
|S|\theta_*(p) \leq p \sum_{v\in S} \sum_{u \in S} T_p(u,v) \sum_{e \in E^\rightarrow_v} \mathbbm{1}(e^+ \notin S) \bP_p(e^+ \to \infty \text{ off $S$}).
\end{equation}
Define $f: V \to \R$ by
\[f_p(v)= \frac{1}{1-p}\mathbbm{1}(v\in S) \sum_{e \in E^\rightarrow_v} \mathbbm{1}(e^+ \notin S)\bP_p(e^+ \to \infty \text{ off $S$}).\]
Rewriting the above inequality \eqref{eq:Ssummed} in terms of $f$ and applying Cauchy-Schwarz, we obtain that
\[
\frac{\theta_*(p)|S|}{p(1-p)} \leq \langle T_p \mathbbm{1}_S, f \rangle \leq \|T_p\|_{2\to 2} \|\mathbbm{1}_S\|_2 \|f\|_2
\leq \|T_p\|_{2\to 2} |S|^{1/2} \|f\|_1^{1/2} \|f\|_\infty^{1/2},
\]
and since we clearly have that $\|f\|_\infty \leq M \theta^*(p)/(1-p)$, it follows that
\[
\frac{1}{1-p}\sum_{v \in S} \sum_{e \in E^\rightarrow_v} \mathbbm{1}(e^+ \notin S)\bP_p(e^+ \to \infty \text{ off $S$})=\|f\|_1 \geq \left[\frac{\theta_*(p)}{p^2(1-p)M \theta^*(p)\|T_p\|_{2\to 2}^2}\right] \theta_*(p)|S|
\]
as claimed. 

The deduction of \eqref{eq:Dapprox} from \eqref{eq:Dquantitative} follows by standard arguments: Indeed, if $G$ is connected and quasi-transitive then there exists $C$ such that $\theta_*(p)\geq p^C\theta^*(p)$ for every $p\in [0,1]$, while if $p_c(G)<p_{2\to 2}(G)$ then $\|T_p\|_{2\to 2}$ is bounded on a neighbourhood of $p_c$. On the other hand, for quasi-transitive graphs there always exists a positive constant $c$ such that $\theta_*(p) \geq c (p-p_c)$ for all $p_c \leq p \leq 1$ \cite{duminil2015new}. Together these observations allow us to deduce \eqref{eq:Dapprox} from \eqref{eq:Dquantitative}.
\end{proof}

\begin{remark}
The proof of \cite[Proposition 2.1]{HermonHutchcroftSupercritical} can also be made quantitative under the assumption that $p_c<p_{2\to 2}$, since in this case we know that the density of trifurcations is of order $(p-p_c)^3$ \cite[Corollary 5.6]{hutchcroft20192}. Note however that the resulting bound is not sharp.
\end{remark}

An easy corollary of \cref{prop:NegativeTermQuant} is the following  weak version of the first moment estimate from \cref{cor:moments}. This weak estimate will nevertheless be useful to us as boundary data when we analyze a certain differential inequality later in the paper.

\begin{corollary}
\label{cor:good_points}
Let $G=(V,E)$ be a connected, locally finite, quasi-transitive graph such that $p_c<p_{2\to 2}$. Then there exist positive constants $\delta$ and $C$ such that
\[
\inf\Bigl\{ (p-p_c)\bE_{p,\infty} |K_v| : p \in (p_c+\eps,p_c+2 \eps)\Bigr\} \leq C
\]
for every $v\in V$ and $0<\eps \leq \delta$
\end{corollary}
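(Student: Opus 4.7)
The plan is to integrate the differential inequality furnished by \cref{prop:NegativeTermQuant} applied to the constant function $F\equiv 1$ over the window $(p_c+\eps,p_c+2\eps)$, and then use the upper bound on $\bP_p(|K_v|=\infty)$ that is implicit in \cref{lem:inside_window} to convert the resulting integral estimate into the desired infimum bound.

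Taking $F\equiv 1$ in \cref{prop:NegativeTermQuant} gives $\bU_{p,n}[1]=0$, so the derivative formula from \cref{subsec:setup} combined with \cref{prop:NegativeTermQuant} yields the polynomial inequality
\[
-\frac{d}{dp}\bP_p(E_v\leq n) \;=\; \bD_{p,n}[1] \;\geq\; c(p-p_c)\,\bE_{p,n}|K_v|
\]
for every $n\geq 1$ and every $p\in(p_c,p_c+\delta_0)$, where $c,\delta_0>0$ are the constants coming from \cref{prop:NegativeTermQuant}. I would then integrate this inequality over $p\in(p_c+\eps,p_c+2\eps)$ and let $n\to\infty$ via monotone convergence on both sides (the integrands $(p-p_c)\bE_{p,n}|K_v|$ are non-negative and non-decreasing in $n$) to obtain
\[
\bP_{p_c+2\eps}(|K_v|=\infty)-\bP_{p_c+\eps}(|K_v|=\infty) \;\geq\; c\int_{p_c+\eps}^{p_c+2\eps}(p-p_c)\bE_{p,\infty}|K_v|\,dp.
\]

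The key observation is that \cref{lem:inside_window} already encodes a mean-field control on the percolation probability: sending $n\to\infty$ in the bound $\bP_p(E_v\geq n)\leq C'(n^{-1/2}\vee(p-p_c))$ (applied with $H=G$) yields $\bP_p(|K_v|=\infty)\leq C'(p-p_c)$ for every $p\in[p_c,p_c+\delta_1)$, where $C',\delta_1$ are the constants from \cref{lem:inside_window}. Setting $\delta:=\min(\delta_0,\delta_1)/2$, this bounds the left-hand side above by $2C'\eps$, so
\[
c\int_{p_c+\eps}^{p_c+2\eps}(p-p_c)\bE_{p,\infty}|K_v|\,dp \;\leq\; 2C'\eps.
\]
Since the infimum over an interval is at most the average, dividing by the window length $\eps$ gives the claim with $C:=2C'/c$.

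The only real obstacle is noticing that \cref{lem:inside_window} is already strong enough to provide the Lipschitz-at-$p_c$ bound $\bP_p(|K_v|=\infty)\leq C'(p-p_c)$; without such an input the trivial estimate $\bP_p(|K_v|=\infty)\leq 1$ would only yield $\inf\leq O(1/\eps)$, which is insufficient for the statement. Once this observation is in hand, the remaining steps---integrating the differential inequality, taking a monotone-convergence limit in $n$, and averaging over the $\eps$-long window---are routine.
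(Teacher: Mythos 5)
Your proposal is correct and follows essentially the same route as the paper's own proof: integrate the differential inequality $\bD_{p,n}[1]\geq c(p-p_c)\bE_{p,n}|K_v|$ over the window $(p_c+\eps,p_c+2\eps)$, pass to the limit $n\to\infty$, and combine with the mean-field bound $\bP_p(|K_v|=\infty)\preceq p-p_c$ (which the paper states directly as a consequence of the triangle condition, whereas you correctly observe it follows from \cref{lem:inside_window}). The only cosmetic difference is that the paper discards the negative term $-\bP_{p_c+\eps}(E_v>n)$ before taking $n\to\infty$, which makes the limiting step slightly cleaner than your phrasing of using monotone convergence ``on both sides,'' but your argument is sound.
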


\begin{proof}
Fix $v\in V$. Since $G$ is quasi-transitive and satisfies the triangle condition, there exists a constant $C$ such that $\bP_p( |K_v|=\infty) \leq C (p-p_c)$ for every  $p_c \leq p \leq 1$ \cite{MR1127713} (this also follows from \cref{lem:inside_window}). On the other hand, \cref{prop:NegativeTermQuant} implies that there exist positive constants $c$ and $\delta$ such that
\[
\frac{d}{dp} \bP_p(|K_v|> n)= - \frac{d}{dp} \bE_{p,n}[1]=\bD_{p,n}[1] \geq c(p-p_c) \bE_{p,n} |K_v|
\]
for every $n\geq 1$ and $p\in (p_c,p_c+\delta]$.
Integrating this differential inequality yields that
\[\int_{p_c+\eps}^{p_c+2\eps} c(p-p_c) \bE_{p,n} |K_v| \dif p\leq \bP_{p_c+2\eps}(|K_v|> n) - \bP_{p_c+\eps}(|K_v|>n)
 \leq \bP_{p_c+2\eps}(|K_v|> n)
\]
for every $0<\eps \leq \delta/2$. Using the monotone convergence theorem to take the limit as $n\to \infty$, we obtain that
\[\int_{p_c+\eps}^{p_c+2\eps} c(p-p_c) \bE_{p,\infty} |K_v| \dif p
 \leq  \bP_{p_c+2\eps}(|K_v|=\infty) \leq 2C \eps
\]
for every $0<\eps \leq \delta/2$, where the final inequality follows from \cref{lem:inside_window}. This is easily seen to imply the claim.
\end{proof}

\subsection{Skinny clusters and the intrinsic radius}
\label{subsec:positive_term}

The goal of this section is to prove the following proposition, which establishes the upper bounds of \cref{thm:main_radius} in the slightly supercritical regime. This is substantially easier than the corresponding upper bounds on the tail of the volume.

\begin{prop}
\label{prop:int_rad_supercritical_upper}
Let $G=(V,E)$ be a connected, locally finite, quasi-transitive graph such that $p_c<p_{2\to 2}$. Then there exist positive constants $\delta$, $c$, and $C$ such that
\[\bP_p(r \leq \operatorname{Rad}_\mathrm{ext}(K_v) < \infty) \leq \bP_p(r \leq \operatorname{Rad}_\mathrm{int}(K_v) < \infty) \leq C r^{-1} e^{-c(p-p_c)r}\]
for every $r\geq 1$, $v\in V$, and $p\in [p_c,p_c+\delta)$.
\end{prop}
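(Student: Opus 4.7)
The first inequality is immediate from $\Rad_\mathrm{ext}(K_v) \leq \Rad_\mathrm{int}(K_v)$. For the second, set $R_v := \Rad_\mathrm{int}(K_v)$ and define $f_n(p) := \bP_p(R_v \geq r,\, E_v \leq n)$; by monotone convergence it suffices to bound $f_n$ uniformly in $n$ and then send $n\to\infty$. Russo's formula from \cref{subsec:setup} gives
\[
f_n'(p) = \bU_{p,n}\!\left[\mathbbm{1}(R_v \geq r)\right] - \bD_{p,n}\!\left[\mathbbm{1}(R_v \geq r)\right].
\]
The positive term satisfies $\bU_{p,n}[\mathbbm{1}(R_v \geq r)] \leq (r/p_c)\, f_n(p)$: every open pivotal edge for $\{R_v \geq r\}$ must lie on every intrinsic geodesic of length $r$ from $v$, hence on a single such geodesic, yielding at most $r$ open pivotals. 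The negative term, via \cref{prop:NegativeTermQuant}, satisfies $\bD_{p,n}[\mathbbm{1}(R_v \geq r)] \succeq (p-p_c)\, \bE_{p,n}[|K_v|\, \mathbbm{1}(R_v \geq r)]$ on a right neighbourhood of $p_c$.

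The crude bound $|K_v| \geq r$ on $\{R_v \geq r\}$ only gives $\bD \succeq (p-p_c)r\, f_n$, which combined with the $\bU$ estimate produces the wrong exponent $(p-p_c)^2 r$. The sharp exponent $(p-p_c)r$ requires a quantitative \emph{skinny-cluster} estimate of the form
\[
\bP_p\bigl(R_v \geq r,\ E_v \leq Mr/(p-p_c)\bigr) \preceq r^{-1}\, e^{-c' r(p-p_c)}
\]
for suitable constants $M, c' > 0$, formalising the heuristic that a cluster of intrinsic radius $r$ typically drags along volume of order $r$ times the correlation length $\xi \sim 1/(p-p_c)$, so smaller volumes are exponentially suppressed. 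I plan to establish this by combining the intrinsic-ball bound $\bE_p[|B_\mathrm{int}(v,r)|] \preceq r$ from \cref{lem:inside_window} with a BK-type exploration along a length-$r$ intrinsic geodesic. Granting this estimate, splitting $\bE_{p,n}[|K_v|\mathbbm{1}(R_v \geq r)]$ at the threshold $E_v = Mr/(p-p_c)$ and using $|K_v| \succeq E_v$ on the complementary fat event yields $\bD \succeq c_1 c_2 M r\, f_n(p) - C''(p-p_c) \cdot r^{-1} e^{-c' r(p-p_c)} \cdot Mr/(p-p_c)$. Choosing $M$ so that $c_1 c_2 M > 2/p_c$ therefore produces
\[
f_n'(p) \leq -c_3\, r\, f_n(p) + C'''\, e^{-c' r(p-p_c)}.
\]

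Gronwall's inequality with integrating factor $e^{c_3 r(p-p_c)}$, combined with the boundary estimate $f_n(p_c) \leq C/r$ from \eqref{eq:int_critical}, yields $f_n(p) \preceq r^{-1} e^{-\tilde c\, r(p-p_c)}$ on $[p_c, p_c + \delta)$ after shrinking $\delta$ relative to $c'/c_3$ so that the integrated correction is dominated by the target bound (the residual constant-level contribution $C''' (p-p_c) e^{-c_3 r(p-p_c)}$ is absorbed into $C r^{-1} e^{-\tilde c r(p-p_c)}$ with a slightly smaller rate $\tilde c < c_3$, using $(p-p_c)\leq r^{-1} e^{(c_3-\tilde c)r(p-p_c)}$ for $r(p-p_c)$ large and bounding it trivially for $r(p-p_c)$ of order unity). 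Sending $n \to \infty$ completes the proof. The main obstacle will be the skinny-cluster bound: \cite[Lemma~2.8]{HermonHutchcroftSupercritical} provides only a qualitative analogue, and upgrading it to a quantitative estimate with the correct exponential rate in $r(p-p_c)$ — the place where the $L^2$ boundedness hypothesis must be decisively used as a replacement for the duality arguments available only on trees — is where almost all of the technical work of this subsection will lie.
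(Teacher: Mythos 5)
Your plan is correct and follows the same route as the paper's proof: Russo's formula with the $\bU/\bD$ decomposition, the pivotal-edge bound $\bU_{p,n}[\mathbbm{1}(R_v\geq r)]\leq (r/p_c)\,\bP_{p,n}(R_v\geq r)$, \cref{prop:NegativeTermQuant} to lower-bound $\bD_{p,n}$ by $(p-p_c)\bE_{p,n}[E_v\mathbbm{1}(R_v\geq r)]$, a split of that expectation at the threshold $E_v\sim r/(p-p_c)$ using a skinny-cluster estimate (which the paper states and proves as \cref{lem:SkinnyRadius}), and an integrating-factor integration with boundary data $\bP_{p_c}(R_v\geq r)\preceq r^{-1}$. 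Your handling of the residual term at the end matches the paper's manipulation as well.

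One inaccuracy worth flagging: you write that the skinny-cluster bound is ``the place where the $L^2$ boundedness hypothesis must be decisively used.'' In fact, \cref{lem:SkinnyRadius} is proved under the weaker hypothesis $\nabla_{p_c}<\infty$ alone; its proof is a renewal-type intrinsic exploration (tracking stopping times at which the exploration front $\partial B_\mathrm{int}(v,i)$ is thin), using the one-arm and expected-volume bounds of \cref{lem:inside_window}, which come from the triangle condition. The $L^2$ boundedness condition $p_c<p_{2\to 2}$ is used decisively elsewhere in the argument, namely in \cref{prop:NegativeTermQuant} to obtain the operator-norm bound $\|T_p\|_{2\to 2}<\infty$ that makes the negative-term inequality $\bD_{p,n}\succeq (p-p_c)\bE_{p,n}[|K_v|\cdot F(K_v)]$ quantitative with the correct constant $\asymp(p-p_c)$. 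Your sketch for proving the skinny-cluster estimate (a BK-type exploration along a geodesic) also differs somewhat from the paper's level-set exploration with thin-waist stopping times, but as a plan it points in a workable direction.
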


We begin with the following proposition, 
which upper bounds the probability of having a large \emph{skinny} cluster, whose radius is large but whose volume is smaller than it should be given the large radius.
In particular, this proposition applies the assumption $\nabla_{p_c}<\infty$ to give a quantitative improvement to \cite[Lemma 2.8]{HermonHutchcroftSupercritical}. This proposition will be extremely useful to us, and will be applied many times throughout the paper. Again, it will be important for these future applications to have bounds that hold in arbitrary subgraphs of our fixed transitive graph $G$.


\begin{prop}[Skinny clusters]
\label{lem:SkinnyRadius}
Let $G=(V,E)$ be a connected, locally finite, quasi-transitive graph such that $\nabla_{p_c}<\infty$. 
 There exist positive constants $\delta$, $c$, and $C$ such that the bound
\begin{equation*}
\bP^H_p(r \leq R_v < \infty \text{ and } E_v \leq \alpha R_v) \leq C \inf \left\{ \left(\frac{1}{r} + \lambda\right) \exp\left[- c e^{-C \lambda \alpha} \lambda r\right] : 0 \vee (p-p_c) \leq \lambda \leq \delta \right\}
\end{equation*}
 holds for every $0 \leq p \leq p_c+\delta$, $\alpha\geq 1$, $r \geq 0$, subgraph $H$ of $G$, and vertex $v$ of $H$. 
\end{prop}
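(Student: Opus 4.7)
The strategy is to combine the critical-window bound of \cref{lem:inside_window} with a sprinkling argument. Fix $\lambda \in [\max(0, p-p_c), \delta]$ and set $p' := p_c + \lambda$, so that $p \le p'$. Applying \cref{lem:inside_window} at $p'$ immediately yields the prefactor: $\bP_{p'}^H(R_v \ge r) \le C(1/r + \lambda)$. The remaining task is to extract the exponential factor $\exp(-c e^{-C\lambda\alpha}\lambda r)$ from the skinniness constraint, since without it one could only trivially dominate $\bP_p^H(A_{r,\alpha})$ by $\bP_{p'}^H(R_v\ge r)$.

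Use the standard monotone coupling $\omega_p \subseteq \omega_{p'}$, viewing $\omega_{p'}$ as $\omega_p$ plus independent sprinkling at rate $\sim \lambda$. On $A_{r,\alpha}(\omega_p)$ the cluster $K = K_v(\omega_p)$ contains a length-$r$ intrinsic geodesic but has $|E(K)| \le \alpha r$; a bounded-degree counting argument (with constant degrading as $\alpha$ grows) produces $\succeq r$ closed boundary edges of $K$. Conditional on the exploration revealing $K$, each such closed boundary edge $e = (u, w)$, when sprinkled open, extends $K_v$ by the cluster of $w$ in the complementary subgraph $H \setminus E(K)$. A careful application of \cref{lem:inside_window} in this subgraph at $p'$, together with the supercritical density lower bound $\theta_*(p') \succeq \lambda$, should yield a per-edge destruction probability at least $c\lambda e^{-C\lambda\alpha}$: the $\lambda$ factor is the off-critical rate and $e^{-C\lambda\alpha}$ is the penalty extracted from the inside-window estimate at the relevant tail threshold.

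Second, to promote a per-edge destruction probability into a genuine exponential tail on $\bP_p^H(A_{r,\alpha})$, I plan to iterate: partition $[p, p']$ into $N \asymp \lambda r e^{-C\lambda\alpha}$ sprinkling stages, and at each stage use a Markovian exploration to show that the conditional probability of destroying $A_{r,\alpha}$, given that it has survived all earlier stages, is bounded below by a positive absolute constant. Multiplying these conditional survival probabilities yields overall survival probability at most $e^{-cN}$; combining with the critical-window bound for $\bP_{p'}^H(R_v \ge r)$ then delivers the claimed estimate. The complementary regime $\lambda r e^{-C\lambda\alpha} \preceq 1$ (effectively inside the scaling window) is handled directly by \cref{lem:inside_window}.

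\textbf{Main obstacle.} The hardest step is the uniform lower bound on per-stage destruction probability, since the skinniness budget evolves as the cluster grows through successive stages and the single-shot argument (coupling $\omega_p$ to $\omega_{p'}$ in one go) gives only a polynomial-type factor $1/(1-e^{-x})$ rather than true exponential decay. Overcoming this requires a careful exploration-and-sprinkle bookkeeping that preserves enough unexplored boundary at each stage, combined with a sharp application of \cref{lem:inside_window} in the complementary subgraph to correctly extract the $e^{-C\lambda\alpha}$ factor. A secondary subtlety is extending the boundary-counting argument to general $\alpha \ge 1$: when $\alpha$ exceeds the maximum degree the naive counting fails, and one must argue via the non-geodesic structure of $K$, which is precisely where the $e^{-C\lambda\alpha}$ penalty naturally enters.
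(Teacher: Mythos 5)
Your sprinkling strategy has a structural mismatch with what the proposition asserts. The statement is uniform over arbitrary subgraphs $H$ of $G$, and the paper genuinely needs that uniformity: the proof proceeds by running an exploration and repeatedly applying the bound in the subgraph spanned by unexplored edges. To make sprinkling destroy the event you need a sprinkled boundary edge to connect to an infinite $p'$-cluster in $H\setminus E(K_v)$, i.e.\ a \emph{lower} bound on supercritical percolation in an arbitrary subgraph. But $\theta_*(p')\succeq \lambda$ is a fact about the full graph $G$; for a thin subgraph $H$ (a ray, say) there is no infinite cluster at any $p<1$, so your per-edge destruction probability is simply zero, while the event can still occur with small but positive probability. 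The paper's proof avoids ever needing such a lower bound: it runs the intrinsic exploration $\partial B_\mathrm{int}(v,i)$ and, by a pigeonhole on the budget $E_v\le\alpha R_v$, finds $\Theta(\lambda r)$ ``bottleneck'' generations whose frontier has at most $4\alpha$ edges. Passing a bottleneck requires one of those $\le 4\alpha$ edges to be open and to lead to an intrinsic path of length $\approx 1/\lambda$ in the so-far-unexplored graph, which by the uniform-over-subgraphs \emph{upper} bound \eqref{eq:skinny_onearm_restated} from \cref{lem:inside_window} happens with probability $\le 1-(1-C\lambda)^{4\alpha}$; the product over the $\Theta(\lambda r)$ bottlenecks then gives the factor $\exp(-ce^{-C\lambda\alpha}\lambda r)$. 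So the $e^{-C\lambda\alpha}$ emerges from an upper-bound structure (the chance that none of $\le 4\alpha$ sub-explorations survives), not from a destruction-rate lower bound, and I do not see a route to it by sprinkling.

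There is a second, independent gap: the claim of $\succeq r$ closed boundary edges fails for $\alpha$ near $1$. Since $K_v$ has at least $R_v$ open edges, the number of closed $H$-edges touching $K_v$ is at most $E_v - R_v \le (\alpha-1)R_v$, which vanishes as $\alpha\downarrow 1$ (at $\alpha=1$ the cluster is forced to be a bare path with no boundary edges at all). You flagged large $\alpha$ as the danger, but large $\alpha$ is harmless for the counting; it is small $\alpha$ — precisely where the target bound is strongest — where your argument has nothing to sprinkle on. Combined with the subgraph obstruction above, this suggests the sprinkling paradigm is the wrong tool here, and you should instead look for an exploration argument that only calls on upper bounds valid in every subgraph.
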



Here, we recall that $E_v$ denotes the number of edges of $H$ touched by the percolation cluster of $v$ in $H$, and $R_v$ denotes the intrinsic radius of this cluster. 
Again, we stress that in the statement and proof of this proposition, $p_c$ will always denote $p_c(G)$ and all implicit constants will depend only on $G$.

\begin{proof}[Proof of \cref{lem:SkinnyRadius}]
 Fix a subgraph $H$ of $G$, a vertex $v$ of $H$, $0 \leq p \leq 1$, $r \geq 1$ and $\alpha \geq 1$. 
Let $\lambda \geq (p-p_c) \vee 0$. 
It suffices to prove that there exist positive constants $\delta$, $c$ and $C$ depending only on $G$ such that if $\lambda \leq \delta$ then
\begin{equation}
\label{eq:SkinnyReStatement0}
\bP^H_p(r \leq R_v < \infty \text{ and } E_v \leq \alpha R_v) \preceq \left(\frac{1}{r} + \lambda\right) \exp\left[- c e^{-C \lambda \alpha} \lambda r\right].
\end{equation}
Let $n=\lceil 1/\lambda \rceil+2$. 
The case $r = O(n)$ of \eqref{eq:SkinnyReStatement0} may be deduced easily from \cref{lem:inside_window}: Indeed, it follows from \cref{lem:inside_window}  that
 there exists $\delta_1 > 0 $ such that if $0\leq p \leq p_c+\delta_1$ and $r \leq 4n$ then 
\begin{equation}
\label{eq:Skinny_small_n}
\bP^H_p(r \leq R_v < \infty \text{ and } E_v \leq \alpha R_v) \leq \bP_p^{H}(R_v \geq r) \preceq \frac{1}{r}.
\end{equation}
The bound \eqref{eq:Skinny_small_n} is already of the desired order when $r \leq 4n$, since the quantity in the exponential on the right hand side of \eqref{eq:SkinnyReStatement0} is bounded in this regime. Thus, it suffices to prove that there exist positive constants $\delta_2$, $c$, and $C$ depending only on $G$ such that if $\lambda \leq \delta_2$ then
\begin{equation}
\label{eq:SkinnyReStatement}
\bP^H_p(r \leq R_v < \infty \text{ and } E_v \leq \alpha R_v) \preceq \lambda \exp\left[- c e^{-C \lambda \alpha} \lambda r\right]
\end{equation}
for every $r \geq 4n$.

\begin{figure}
\centering
\includegraphics{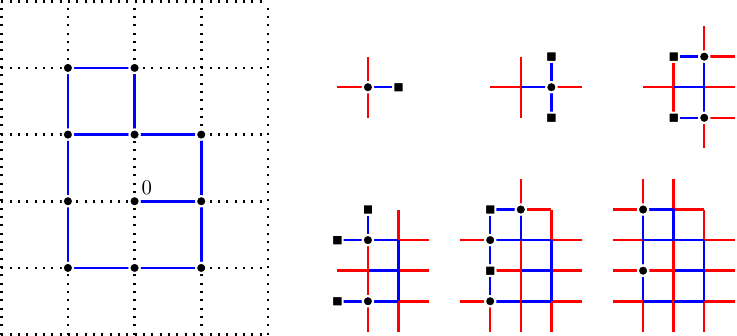}
\caption{Exploring a cluster in a breadth-first manner. Left: A possible percolation cluster in $\Z^2$ with intrinsic radius $5$. Solid blue edges are open, dotted edges are closed. Right: Exploring the cluster in a breadth-first manner starting from the origin. At stage $i$, we reveal the status of all edges that are incident to the set of vertices of intrinsic distance exactly $i$ from the origin (black discs) and that have not already been revealed. Revealed open edges are blue and revealed closed edges are red. Running the exploration for $i$ steps reveals the intrinsic ball of radius $i$, with vertices of distance exactly $i$ represented by black squares. In this instance the process terminates after the cluster fails to grow on the sixth step.}
\label{fig:exploration}
\end{figure}

To this end, suppose that $r\geq 4n$, so that $k:=\lfloor r/2n \rfloor-1 \geq 1$. Suppose further that $0\leq p \leq p_c+\delta_1$ and that $0\leq \lambda \leq \delta_1$. 
Consider exploring the cluster of $v$ as follows: at stage $i$,  expose the value of those edges that touch $\partial B_{\mathrm{int}}(v,i-1)$, the set of vertices with intrinsic distance exactly $i-1$ from $v$, and have not yet been exposed. Stop when $\partial B_{\mathrm{int}}(v,i)=\eset $. Here, by \emph{intrinsic distance} we always mean the graph distance \emph{on the percolation cluster} (which is a subgraph of $H$). See \cref{fig:exploration} for an illustration of this exploration process.
For each $i \geq 0$ let $X_i$ be the set of edges whose status is queried at stage $i+1$, so that $X_i$ is determined by the process run up to time $i$ and $|X_i| >0$ for every $0 \leq i \leq r-1$ on the event that $R_v \geq r$.
Define a sequence of stopping times $(T_j)_{j\geq 0}$ for this exploration process by setting $T_0=0$ and recursively  setting 
\[T_{j+1} = \inf\Bigl\{i \geq T_j + n : 0<|X_i| \leq 4\alpha \Bigr\},
\]
letting $T_{j+1}=\infty$ if the set on the right hand side is empty.
Recalling that $k:=\lfloor r/2n \rfloor-1$, we claim that $T_{k}<\infty$ on the event that $r \leq R_v <\infty$ and $E_v \leq \alpha R_v$. Indeed, suppose that this event holds. Let $k'=k'(K_v)=\lfloor R_v/2n\rfloor-1$, so that $k'\geq k \geq 1$ and $k' \geq R_v/4n$. We trivially have that 
$2nk'+n-1 = 2n(\lfloor R_v/2n\rfloor-1)+n-1 \leq R_v-1$
 and that
\[
\sum_{a=1}^{2k'}\sum_{b=0}^{n-1} |X_{an+b}| \leq \sum_{i=n}^{R_v-1} |X_i| \leq E_v \leq \alpha R_v,
\]
and it follows that there exists $0\leq b = b(K_v) \leq n-1$ such that
$\sum_{a=1}^{2k'} |X_{an+b}| \leq \alpha R_v/n$.
Applying Markov's inequality, we deduce that there exists a subset $A=A(K_v)$ of $\{1,\ldots,2k'\}$ such that $|A| \geq k'$ and $|X_{an+b(K_v)}| \leq \alpha R_v /n k' \leq 4\alpha$ for every $a \in A$. If we enumerate $A$ in increasing order as $A=\{a_1,a_2,\ldots\}$, then an easy induction shows that $T_i \leq a_i n + b < \infty$ for every $i \leq k'$ and hence for every $i\leq k$ as claimed.

\begin{figure}
\centering
\includegraphics[width=0.8\textwidth]{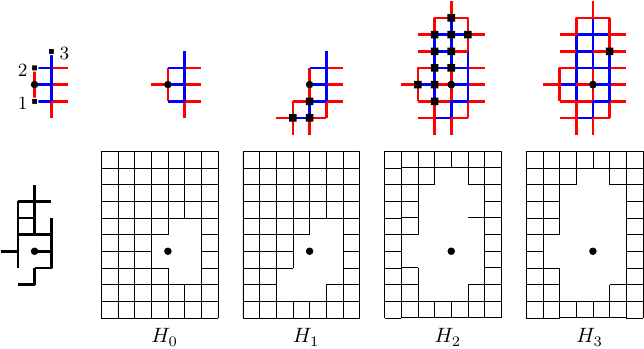}
\caption{Sequentially exploring the clusters of the boundary edges $X_{T_i}$. Left: A cluster in $\Z^2$ (bottom left) and the three vertices in the set $Y_{3}$ (top left), enumerated clockwise. Right: The subgraphs $H_0,\ldots,H_3$ determined by sequentially exploring the clusters of the edges $v_1,\ldots,v_3$ in the complement of the explored region $H_0$. At each step, revealed open edges are blue, revealed closed edges are red, and the vertices of $K_i$ are marked with black squares.}
\end{figure}

Let $\cF_i$ be the $\sigma$-algebra generated by the first $i$ steps of the exploration process, and let $\cF_{T_i}$ be the stopped $\sigma$-algebra associated to the stopping time $T_i$. We clearly have that
\[
\bP^H_p(T_1 < \infty \mid \cF_{T_0}) = \bP^H_p(T_1 < \infty) \leq \bP^H_p(R_v \geq n) \preceq \lambda,
\]
where the final inequality follows from \eqref{eq:Skinny_small_n}. Now let $i\geq 1$ and condition on $\cF_{T_i}$. If $T_i=\infty$ then we trivially have that $T_{i+1}=\infty$ also. Now suppose that $T_i<\infty$. Let $Y_{T_i}$ be the set of vertices of $\partial B_\mathrm{int}(v,T_i)$ that have an edge of $X_{T_i}$ incident to them, so that $|Y_{T_i}|\leq 2 |X_{T_i}|$.
Enumerate the edges of $Y_{T_i}$ by $Y_{T_i} = \{w_1,\ldots,w_\ell\}$.  Let $H_0$ be the subgraph of $H$ spanned those edges that have \emph{not} been queried by time $T_i$ (i.e., those edges not in $\bigcup_{j=0}^{T_i-1} X_j$). Let $K_1$ be the cluster of $w_1$ in $H_0$ and let $H_1$ be the subgraph of $H_0$ defined by deleting every edge that touches $K_1$ from $H_0$. (Recall that we say an edge of $H_0$ \emph{touches} $K_1$ if it has at least one endpoint in the vertex set of $K_1$.) Inductively, for each $2 \leq j \leq \ell$, let $K_j$ be the cluster of $w_j$ in $H_{j-1}$ and let $H_j$ be the subgraph of $H_{j-1}$ formed by deleting every edge that touches $K_j$ from $H_{j-1}$. 
 In order for $T_{i+1}$ to be finite, we must have that there exists $1 \leq j \leq \ell$ such that $K_j$ is non-empty and has intrinsic radius at least $n-1$ from the perspective of $w_j$. 
 It follows from \cref{lem:inside_window} applied to the subgraph $H_{j-1}$ that there exists a constant $C$  such that for each $1 \leq j \leq \ell$, the conditional probability that $K_j$ has intrinsic radius at least $n-1$ given $\cF_{T_i}$ and the clusters $K_1,\ldots,K_{j-1}$ is at most $C \lambda$, and hence that
\begin{align*}
\bP^H_p(T_{i+1} < \infty \mid \cF_{T_i}) 
 &\leq \mathbbm{1}(T_i < \infty) \left[1-\left|1-1 \wedge C \lambda \right|^{Y_{T_i}}\right] \leq
 \mathbbm{1}(T_i < \infty) \left[1-\left|1-1 \wedge C \lambda \right|^{8\alpha}\right].
\end{align*}
(Note that at this stage it was very important that the bounds of \cref{lem:inside_window} held for arbitrary subgraphs of $H$, not just for $H$ itself.) Taking products and using the bound $1-x \leq e^{-x}$, we obtain that
\[
\bP^H_p(T_i<\infty) \preceq \lambda \left[1-|1-1\wedge C\lambda|^{8\alpha}\right]^{i-1}\leq 
\lambda \exp\left[-|1-1\wedge C\lambda|^{8\alpha}(i-1)\right]
\]
for every $i\geq 1$ and hence that
\[
\bP^H_p(R_v \geq r, E_v \leq \alpha R_v) \leq 
\bP^H_p(T_k<\infty)
\preceq \lambda \exp\left[-|1-1\wedge C \lambda|^{8\alpha}(k-1)\right].
\]
Now, since $1-x \geq e^{-2x}$ for small non-negative values of $x$, it follows that there exist positive constants $\delta_2$, $c$ and $C'$ such that if $\lambda \leq \delta_2$ and $r \geq 4n$ then
\begin{equation}
\label{eq:skinny_large_n}
\bP^H_p(R_v \geq r, E_v \leq \alpha R_v) 
\preceq \lambda \exp\left[-ce^{-C'\lambda \alpha}\lambda r\right].
\end{equation}
The proof may be concluded by combining the bounds \eqref{eq:Skinny_small_n} and \eqref{eq:skinny_large_n}, which hold for $r \leq 4n$ and $r\geq 4n$ respectively.
\end{proof}

\begin{remark}
The expression
$e^{-C\lambda \alpha}\lambda$ is maximized by $\lambda = 1/C\alpha$. In particular, taking $\alpha = rs$ and $\lambda = 1/rs$, it follows from \cref{lem:SkinnyRadius} and \cref{lem:subcritical_radius_lower} that, under the hypotheses of those results, there exist constants $c$ and $C$ such that
\begin{equation}
\label{eq:stretched_exponential_remark}
\bP_{p_c}\bigl(E_v \leq s^{-1} r^{2} \mid R_v \geq r \bigr) \leq C e^{-c s}
\end{equation}
for every $v\in V$, $r\geq 1$, and $s \geq 1$. 
\end{remark}

We now apply \cref{prop:NegativeTermQuant,lem:SkinnyRadius} to prove \cref{prop:int_rad_supercritical_upper}.

\begin{proof}[Proof of \cref{prop:int_rad_supercritical_upper}]
The first inequality is trivial, so it suffices to prove the second. 
It follows from \cref{prop:NegativeTermQuant} that there exist positive constants $\delta_1$ and $c_1$ such that
\begin{align*}
\frac{d}{dp}\bP_{p,n}(R_v \geq r) &\leq - c_1 (p-p_c)\bE_{p,n}\left[E_v \mathbbm{1}(R_v \geq r)\right] + \bU_{p,n}\left[\mathbbm{1}(R_v \geq r)\right]
\end{align*}
for every $r\geq 1$, $n\geq 1$ and $p\in [p_c,p_c+\delta_1)$.
As in the proof of \cref{lem:inside_window}, we can bound $p \bU_{p,n}\left[\mathbbm{1}(R_v \geq r)\right]$ by the expected number of open edges $e$ such that the cluster of $v$ has intrinsic radius at least $r$ in $\omega$ and strictly less than $r$ in $\omega_e$. Since any such open edge must lie on every intrinsic geodesic of length $r$ starting from $v$ in $\omega$, we deduce that
\begin{align}
\label{eq:Rad_diffineq}
\frac{d}{dp}\bP_{p,n}(R_v \geq r)
& \leq - c_1 (p-p_c)\bE_{p,n}\left[E_v \mathbbm{1}(R_v \geq r)\right] + \frac{r}{p_c}\bP_{p,n}(R_v \geq r)
\end{align}
for every $r\geq 1$, $n\geq 1$ and $p\in [p_c,p_c+\delta_1)$. 

On the other hand, it follows from \cref{lem:SkinnyRadius} that there exists positive constants $\delta_2$, $c_2,c_3$, $C_1$, and $C_2$ such that
\begin{align*}
\bP_{p,\infty}\Bigl(R_v \geq r \geq \frac{1}{2}c_1p_c(p-p_c) E_v\Bigr) &\leq C_1\left[\frac{1}{r}+(p-p_c)\right]\exp\left[-c_2 e^{-2C_1 (p-p_c) [c_1p_c(p-p_c)]^{-1}}(p-p_c) r\right]
\\
&\leq 
C_1\left[\frac{1}{r}+(p-p_c)\right]\exp\left[-2c_3(p-p_c) r\right]
\\
&\leq \frac{C_2}{r} \exp\left[-c_3(p-p_c) r\right]
\end{align*}
for every $r\geq 1$ and $p\in [p_c,p_c+\delta_2)$, where we used that
$x e^{-2 x} \leq e^{-x-1}$ for every $x\geq 0$ in the final inequality. 
It follows that 
\begin{align*}
c_1 (p-p_c)\bE_{p,n}\left[E_v \mathbbm{1}(R_v \geq r)\right] &\geq \frac{2r}{p_c} \bP_{p,n}(R_v \geq r, c_1 (p-p_c) E_v \geq 2r)
\\
&\geq 
\frac{2r}{p_c} \bP_{p,n}(R_v \geq r) - \frac{2r}{p_c}\bP_{p,\infty}\Bigl(R_v \geq r \geq \frac{1}{2}c_1p_c(p-p_c) E_v\Bigr)\\
&\geq 
\frac{2r}{p_c} \bP_{p,n}(R_v \geq r) - \frac{2C_2}{p_c} \exp\left[-c_3 (p-p_c)r\right]
\end{align*}
for every $r\geq 1$ and $p\in [p_c,p_c+\delta_2)$.
Letting $\delta_3=\delta_1 \wedge \delta_2$, we deduce from this and \eqref{eq:Rad_diffineq} that
\begin{align*}
\frac{d}{dp}\bP_{p,n}(R_v \geq r)
& \leq - \frac{r}{p_c}\bP_{p,n}(R_v \geq r) + \frac{2C_2}{p_c} \exp\left[-c_3 (p-p_c)r\right]
\end{align*}
for every $r\geq 1$ and $p\in [p_c,p_c+\delta_3)$. 
Letting $c_4=c_3 \wedge (1/p_c)$ and $C_3=2C_2/p_c$, it follows that
\begin{align*}
\frac{d}{dp}\left[e^{c_4(p-p_c)r}\bP_{p,n}(R_v \geq r)\right]
&\leq
\left[ c_4 r - \frac{r}{p_c}\right] e^{c_4(p-p_c)r}\bP_{p,n}(R_v \geq r) 
+ 
\frac{2C_2}{p_c} e^{(c_4-c_3) (p-p_c)r}\\
&\leq 
  \frac{2C_2}{p_c} e^{(c_4-c_3) (p-p_c)r} \leq C_3.
\end{align*}
Integrating this bound 
yields that there exist constants $C_4$ and $C_5$ such that
\begin{align*}
\bP_{p,n}(R_v \geq r) &\leq \bP_{p_c,n}(R_v\geq r) e^{-c_4(p-p_c)r} + C_3(p-p_c)e^{-c_4(p-p_c)r}
\\
&\leq C_4\left(\frac{1}{r} + (p-p_c)\right)e^{-c_4(p-p_c)r} \leq
\frac{C_5}{r} e^{-c_4(p-p_c)r/2} 
\end{align*}
for every $1 \leq r,n < \infty$ and $p\in [p_c,p_c+\delta_3)$. The claim follows by taking $n\to\infty$.
\end{proof}

\subsection{Bounding the positive term I: derivation of the auxiliary differential inequality}
\label{subsec:positivetermI}

The goal of the following two subsections is to prove the upper bound of \cref{thm:main_volume} in the slightly supercritical regime. This is the most technical part of the paper.

\begin{prop}
\label{prop:vol_supercritical_upper}
Let $G=(V,E)$ be a connected, locally finite, quasi-transitive graph such that $p_c<p_{2\to 2}$. Then there exist positive constants $\delta$, $c$, and $C$ such that
\begin{equation}
\label{eq:vol_supercritical_upper}
\bP_p(n \leq |K_v| < \infty) \leq C n^{-1/2} \exp\left[-c(p-p_c)^2n\right]
\end{equation}
for every $n\geq 1$, $v\in V$, and $p\in (p_c,p_c+\delta)$.
\end{prop}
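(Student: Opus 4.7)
The plan is to derive moment bounds of the form $\bE_{p,\infty}[|K_v|^k] \leq C^k k! (p-p_c)^{-2k+1}$ for slightly supercritical $p$ and $k \geq 1$, and then convert them into the stated tail bound via a Chernoff-type argument. This is done by analyzing a differential inequality for the truncated moments $\bE_{p,n}[|K_v|^k]$ set up via Russo's formula in \cref{subsec:setup}.

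Specifically, applying Russo's formula and \cref{prop:NegativeTermQuant} gives
\[
\frac{d}{dp} \bE_{p,n}[|K_v|^k] = \bU_{p,n}[|K_v|^k] - \bD_{p,n}[|K_v|^k] \quad\text{with}\quad \bD_{p,n}[|K_v|^k] \succeq (p-p_c)\, \bE_{p,n}[|K_v|^{k+1}],
\]
so the main task is to upper bound $\bU_{p,n}[|K_v|^k]$ in terms of lower-order quantities. I would expand the telescoping difference $|K_v|^k - |K_v(\omega_e)|^k$ at an open pivotal edge $e$ and apply the BK inequality; the resulting expression involves weighted sums over $(k+1)$-tuples of cluster vertices, which I would organize via multivariate generating functions $F_j(p, x_1, \ldots, x_j)$ encoding $j$-point correlations. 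The aim is to derive a family of recursive differential inequalities of the schematic form $\partial_{x_j} F_j \leq P_j(F_0, \ldots, F_{j-1})$, using \cref{lem:SkinnyRadius} to control error terms arising from clusters with atypically skinny geometry.

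These differential inequalities can then be analyzed inductively on $k$, with the base case $k=1$ provided by \cref{cor:good_points}, to establish the moment bounds $\bE_{p,\infty}[|K_v|^k] \leq C^k k! (p-p_c)^{-2k+1}$ for $p \in (p_c, p_c+\delta)$. In turn these moment bounds yield a finite exponential moment $\bE_{p,\infty}[e^{\lambda |K_v|}] \leq C(p-p_c)$ for $\lambda = c(p-p_c)^2$ small enough, so Markov's inequality gives
\[
\bP_p(n \leq |K_v| < \infty) \leq e^{-\lambda n}\, \bE_{p,\infty}[e^{\lambda|K_v|}] \leq C(p-p_c)\, e^{-c(p-p_c)^2 n}.
\]
The prefactor $n^{-1/2}$ is then extracted via the identity $(p-p_c) = \sqrt{(p-p_c)^2 n}\cdot n^{-1/2}$ combined with the elementary bound $\sqrt{y}\, e^{-y/2} \leq C$ for $y\geq 0$ applied with $y = c(p-p_c)^2 n$, which converts the above to the claimed $C n^{-1/2} e^{-c(p-p_c)^2 n / 2}$.

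The principal obstacle will be the second step: obtaining sharp enough quantitative control on the positive-term generating functions $F_j$ that the final exponent is $(p-p_c)^2$, rather than the $(p-p_c)^4$ one would get from a direct adaptation of the Hermon-Hutchcroft method. As emphasized in the introduction, recovering the optimal exponent requires careful use of the $L^2$ operator norm bound $\|T_{p_c}\|_{2\to2} < \infty$ at every step of the recursive derivation, and a delicate bookkeeping of how the pivotal-edge decomposition of the cluster interacts with these $L^2$ estimates.
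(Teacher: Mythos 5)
Your high-level strategy mirrors the paper's: a Russo differential inequality whose negative part is controlled by \cref{prop:NegativeTermQuant}, a bridges decomposition of $\bU_{p,n}[|K_v|^k]$, multivariate generating functions satisfying a recursive differential inequality, and a base case built from the skinny-cluster estimate \cref{lem:SkinnyRadius}. Two details in your final conversion step, however, do not work as written.

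First, the claimed exponential moment bound $\bE_{p,\infty}[e^{\lambda|K_v|}] \leq C(p-p_c)$ cannot hold: since $e^{\lambda x}\geq 1$, we always have $\bE_{p,\infty}[e^{\lambda|K_v|}] \geq \bP_p(|K_v|<\infty) = 1-\theta(p)$, which tends to $1$ as $p\downarrow p_c$, not to $0$. Your moment bounds $\bE_{p,\infty}[|K_v|^k]\leq C^k k!(p-p_c)^{-2k+1}$ give, for $\lambda = c(p-p_c)^2$ with $c$ small, only $\bE_{p,\infty}[e^{\lambda|K_v|}]\leq 1+O(p-p_c)$, and Markov then yields $\bP_p(n\leq|K_v|<\infty)\leq 2e^{-\lambda n}$, which has no polynomial prefactor. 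What the paper does instead (and what your moment bounds do support) is to bound $\bE_{p,\infty}[|K_v|e^{\lambda|K_v|}] \leq C(p-p_c)^{-1}$ --- note the extra factor of $|K_v|$, which effectively shifts the power series index by one so the sum starts at $k=1$ --- and then apply Markov to the monotone function $|K_v|e^{\lambda|K_v|}$ to get $\bP_p(n\leq|K_v|<\infty)\leq C(n(p-p_c))^{-1}e^{-\lambda n}$. Second, this corrected bound only gives the desired $n^{-1/2}$ prefactor when $n\gtrsim(p-p_c)^{-2}$: the extraction identity requires that $y^{-1/2}e^{-y/2}$ be bounded with $y=c(p-p_c)^2 n$, which fails as $y\to 0$. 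For $n\lesssim (p-p_c)^{-2}$ one must instead invoke the inside-window estimate of \cref{lem:inside_window}; your writeup elides this case split.

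A minor misattribution worth flagging: \cref{cor:good_points} does not serve as the $k=1$ base case of an induction on the moments themselves. It gives only an averaged statement --- that $(p-p_c)\bE_{p,\infty}|K_v|$ is bounded for \emph{some} $p\in(p_c+\eps,p_c+2\eps)$ --- and the paper uses it as boundary data $p_0=p_0(\eps)$ for integrating the differential inequality \eqref{eq:pde_endgame} from $p_0$ up to $p_c+\eps$. The base case of the generating-function induction is \cref{lem:base_case}, which is proved directly from \cref{lem:SkinnyRadius} and \cref{lem:t_zero}.
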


To prove this proposition, it suffices to prove that there exist positive constants $c$, $C$, and $\delta$ such that
\begin{equation}
\label{eq:STP}
\bE_{p,\infty} \left[|K_v|\exp\left(c(p-p_c)^2 |K_v|\right)\right] \leq \frac{C}{p-p_c}
\end{equation}
for every $p\in (p_c,p_c+\delta)$. Indeed, Markov's inequality will then imply that
\[\bP_p(n \leq |K_v| < \infty) \leq \frac{C}{(p-p_c)n} \exp\left[-c(p-p_c)^2n\right]\]
for every $p\in (p_c,p_c+\delta)$ and $n\geq 1$, 
which is of the correct order when $n\geq (p-p_c)^{-2}$. On the other hand, if $n \leq (p-p_c)^{-2}$ then a bound of the correct order is already provided by \cref{lem:inside_window}.

\medskip

The primary remaining obstacle we must overcome in order to prove \eqref{eq:STP} is to establish upper bounds on $\bU_{p,n}[|K_v|^k]$, the positive part of the derivative of the  truncated $k$th moment. Our approach will follow a similar philosophy to that of \cite[Section 2.3]{HermonHutchcroftSupercritical}. Unfortunately, while the methods developed in that paper are quantitative, they are not sharp, and eventually lead to a factor of order $(p-p_c)^4$ rather than of order $(p-p_c)^2$ in the exponent of \eqref{eq:vol_supercritical_upper} when combined with our sharp control of skinny clusters, \cref{lem:SkinnyRadius}. Obtaining optimal bounds requires a rather more delicate and technical approach. In this subsection, we derive a differential inequality which we will use to bound these quantities; the analysis of this differential inequality is then performed in the next subsection. We refer to this differential inequality as the \emph{auxiliary} differential inequality to distinguish it from the other differential inequalities we have been interested in.

\medskip

As in \cite{HermonHutchcroftSupercritical}, we begin by expressing $\bU_{p,n}[|K_v|^k]$ geometrically in terms of bridges. We first recall the relevant definitions.
 Let $H$ be a connected graph. Recall that two vertices $u$ and $v$ of $H$ are said to be \textbf{$2$-connected} if $u$ and $v$ remain connected when any edge is deleted from $H$. (In particular, every vertex is $2$-connected to itself.) Equivalently, by Menger's theorem, $u$ and $v$ are $2$-connected if there exist a pair of edge-disjoint paths each connecting $u$ to $v$.  This defines an equivalence relation on the vertices of $H$, the pieces of which are referred to as the \textbf{$2$-connected components} of $H$. We write $[v]$ for the $2$-connected component of the vertex $v$ in $H$.
An edge $e$ of $H$ is said to be a \textbf{bridge} of $H$ if the graph formed by deleting $e$ from $H$ is disconnected. Equivalently, $e$ is a bridge of $H$ if its endpoints are in distinct $2$-connected components of $H$. We define $\operatorname{Tr}(H)$ to be the tree whose vertices are the $2$-connected 
components of $H$ and whose edges are the bridges of $H$.
Given a graph $H$ and a sequence of vertices $v_1,\ldots,v_k$ of $H$, let $\Bridges(v_1,\ldots,v_k;H)$ be the number of edges in the subtree of $\operatorname{Tr}(H)$ spanned by the union of the geodesics between the vertices $[v_1],\ldots,[v_k]$ in the tree of $2$-connected components $\operatorname{Tr}(H)$. 

\medskip

Let $G$ be a connected, locally finite, and quasi-transitive, let $p\in [0,1]$ and let $v\in V$. We have by \cref{prop:NegativeTermQuant} that there exist positive constants $c$ and $\delta$ such that if $p_c < p \leq p_c+\delta$ then the $p$-derivative of $\bE_{p,n}\left[|K_v|e^{u|K_v|}\right]$ satisfies
\begin{align}
\partial_p \bE_{p,n}\left[|K_v|e^{u|K_v|}\right] &\leq -c(p-p_c) \bE_{p,n}\left[|K_v|^2e^{u|K_v|} \right] + \bU_{p,n}\left[|K_v|e^{u|K_v|}\right]
\nonumber
\\
&=-c(p-p_c) \bE_{p,n}\left[|K_v|^2e^{u|K_v|} \right] + \sum_{k=0}^\infty \frac{u^k}{k!}\bU_{p,n}\left[|K_v|^{k+1}\right]
\label{eq:pderivative1}
\end{align}
for every $u\geq 0$ and $n \geq 1$. 
Observe that, by definition of the relevant quantities,  we may express $\bU_{p,n}[|K_v|^k]$ as
\begin{align*}
\bU_{p,n}[|K_v|^k] &= \sum_{\substack{x_1,\ldots,x_k \\ \in V(G)}} \bU_{p,n}\left[\mathbbm{1}(x_1,\ldots,x_k \in K_v)\right] 
\\&
= \frac{1}{p} \sum_{\substack{x_1,\ldots,x_k \\ \in V(G)}} \bE_{p,n}\left[\mathbbm{1}(x_1,\ldots,x_k \in K_v)\Bridges(v,x_1,\ldots,x_k;K_v)\right],
\end{align*}
where the second equality follows from the fact that if $x_1,\ldots,x_k$ all belong to $K_v$ then the quantity $\Bridges(v,x_1,\ldots,x_k;K_v)$ is equal to the number of edges that are open pivotals for this event.
Writing $\Bridges(v,x_1,\ldots,x_k;K_v) = \Bridges(v,x_1,\ldots,x_k)$, this can be written more succinctly as
\begin{align*}
\bU_{p,n}[|K_v|^k] &
=\frac{1}{p}  \bE_{p,n}\left[\sum_{\substack{x_1,\ldots,x_k\\ \in K_v}}\Bridges(v,x_1,\ldots,x_k)\right]
\end{align*}
for every $n,k\geq 1$. 
%
%
%
%
Summing over $k$ it follows that
\begin{align*}
&\bU_{p,n}\left[|K_v|e^{u|K_v|}\right]\\ &\hspace{0.2cm}= 
\frac{1}{p} \sum_{k=0}^\infty \frac{u^{k}}{k!} \bE_{p,n}\left[\sum_{\substack{x_1,\ldots,x_k\\ \in K_v}}\Bridges(v,x_1,\ldots,x_{k+1}) \mathbbm{1}\left(\Bridges(v,x_1,\ldots,x_{k+1}) \geq \frac{1}{2}cp(p-p_c)|K_v| \right)\right]
\\&\hspace{0.2cm}\hspace{0.26cm}+\frac{1}{p} \sum_{k=0}^\infty \frac{u^{k}}{k!} \bE_{p,n}\left[\sum_{\substack{x_1,\ldots,x_k\\ \in K_v}}\Bridges(v,x_1,\ldots,x_{k+1}) \mathbbm{1}\left(\Bridges(v,x_1,\ldots,x_{k+1}) < \frac{1}{2}cp(p-p_c)|K_v| \right)\right]
\end{align*}
for every $n \geq 1$ and $u\geq 0$, from which we deduce that
\begin{align}
&\bU_{p,n}\left[|K_v|e^{u|K_v|}\right]
\nonumber
\\
&\hspace{0.2cm}\leq \frac{1}{p} \sum_{k=0}^\infty \frac{u^{k}}{k!} \bE_{p,n}\left[\sum_{\substack{x_1,\ldots,x_k\\ \in K_v}}\Bridges(v,x_1,\ldots,x_{k+1}) \mathbbm{1}\left(\Bridges(v,x_1,\ldots,x_{k+1}) \geq \frac{1}{2}cp(p-p_c)|K_v| \right)\right]
\nonumber
\\
&\hspace{0.2cm}\hspace{0.26cm}+\frac{1}{2}c(p-p_c)\bE_{p,n}\left[|K_v|^2e^{u|K_v|}\right]
\label{eq:U_expansion}
\end{align}
for every $u \geq 0$ and $n\geq 1$. We have split the equation up this way precisely so that the second term can be absorbed into the negative term in \eqref{eq:pderivative1}. Indeed, the inequalities \eqref{eq:pderivative1} and \eqref{eq:U_expansion} together imply that if $G$ is a connected, locally finite, quasi-transitive graph with $p_c<p_{2\to 2}$ then there exist constants $\delta$, $c_1$, and $c_2$ such that
\begin{multline}
\label{eq:diffineq_mgf}
\partial_p \bE_{p,n}\left[|K_v|e^{u|K_v|}\right] \leq -c_1(p-p_c)\bE_{p,n}\left[|K_v|^2e^{u|K_v|}\right] \\
+ \frac{1}{p} \sum_{k=0}^\infty \frac{u^{k}}{k!} \bE_{p,n}\left[\sum_{\substack{x_1,\ldots,x_{k+1}\\ \in K_v}}\Bridges(v,x_1,\ldots,x_{k+1}) \mathbbm{1}\Bigl(\Bridges(v,x_1,\ldots,x_{k+1}) \geq c_2(p-p_c)|K_v| \Bigr)\right]
\end{multline}
for every $v\in V$, $u\geq 0$, $p \in (p_c,p_c+\delta)$ and $1 \leq n < \infty$. Intuitively, the constraint that $\Bridges(v,x_1,\ldots,x_{k+1}) \geq c_2(p-p_c)|K_v|$ can be thought of as a higher-order version of the skinniness constraint which we studied in \cref{lem:SkinnyRadius}. 

\medskip

We will control the summands on the right hand side of \eqref{eq:diffineq_mgf} by an inductive analysis of certain generating functions, which we now introduce.
Let $G$ be a countable, locally finite graph, let $p\in [0,1]$, and let $v$ be a vertex of $G$. For each $k\geq 1$ and $n\in \N_\infty = \{1,2,\ldots\}\cup\{\infty\}$ we define $\sG_{k,n} ( \,\cdot\,,\, \cdot\, ; G,v,p): \R^2 \to [0,\infty]$ by
\begin{multline*}
 \mathscr{G}_{k,n}(s,t;G,v,p) 
=
\sum_{a=0}^\infty \sum_{b=0}^\infty \sum_{\substack{x_1,\ldots,x_k\\ \in V(G)}} \bP_{p,n}^G\Bigl(x_1,\ldots,x_k \in K_v, E_v = a, \Bridges(v,x_1,\ldots,x_k;K_v) =b\Bigr) e^{sa+tb},
\end{multline*}
which is a sort of multivariate generating function, and also define 
\[
\mathscr{F}_{k,n}(s,t;G,p) := \sup\left\{\mathscr{G}_{k,n}(s,t;H,u,p) : H \text{ a subgraph of $G$, $u$ a vertex of $H$} \right\}.
\]
Finally, for each $n\in \N_\infty$ define $\sM_{n} ( \,\cdot\,,\, \cdot\,,\,\cdot\, ; G,v,p): \R^2 \times [0,\infty) \to [0,\infty]$ by
\begin{equation}
\label{eq:Mdef}
\sM_n(s,t,u;G,p) := \sum_{k=0}^\infty \frac{u^k}{k!} \sF_{k+1,n}(s,t;G,p).
\end{equation}
Note that if $G$ is a connected, locally finite, quasi-transitive graph with $p_c<p_{2\to 2}$ and $0\leq s \leq c_2(p-p_c) t$ then we have trivially that  
$\mathbbm{1}\Bigl(\Bridges(v,x_1,\ldots,x_{k+1};K_v) \geq c_2(p-p_c)|K_v| \Bigr) \leq \exp(-s|K_v|+t \Bridges(v,x_1,\ldots,x_{k+1};K_v))$ for every $x_1,\ldots,x_{k+1} \in K_v$ and hence that
the expression appearing on the right hand side of \eqref{eq:diffineq_mgf} can be bounded
\begin{align}
&\frac{1}{p} \sum_{k=0}^\infty \frac{u^{k}}{k!} \bE_{p,n}\left[\sum_{\substack{x_1,\ldots,x_{k+1}\\ \in K_v}}\Bridges(v,x_1,\ldots,x_{k+1};K_v) \mathbbm{1}\Bigl(\Bridges(v,x_1,\ldots,x_{k+1};K_v) \geq c_2(p-p_c)|K_v| \Bigr)\right]
\nonumber\\
&\hspace{2cm}\leq 
\frac{1}{p} \sum_{k=0}^\infty \frac{u^{k}}{k!} \bE_{p,n}\left[\sum_{\substack{x_1,\ldots,x_{k+1}\\ \in K_v}}\Bridges(v,x_1,\ldots,x_{k+1};K_v) e^{-s|K_v|+t \Bridges(v,x_1,\ldots,x_{k+1};K_v)}\right]
\nonumber\\
&\hspace{2cm}\leq 
\frac{e}{tp} \sum_{k=0}^\infty \frac{u^{k}}{k!} \bE_{p,n}\left[\sum_{\substack{x_1,\ldots,x_{k+1}\\ \in K_v}} e^{-s|K_v|+2t \Bridges(v,x_1,\ldots,x_{k+1};K_v)}\right] \leq \frac{e}{tp} \sM_n(-s,2t,u;G,p)
\nonumber
\end{align}
for every $u \geq 0$, $n\geq 1$, and $0\leq s \leq c_2(p-p_c)t$, where we used the elementary bound $xe^{tx}\leq et^{-1} e^{2tx}$ in the second inequality.
It follows from this and \eqref{eq:diffineq_mgf}  that if $G$ is a connected, locally finite, quasi-transitive graph with $p_c<p_{2\to 2}$ then there exist positive constants $\delta$, $c_1$, $c_2$, and $C_1$ such that
\begin{equation}
\label{eq:diffineq_mgf2}
\partial_p \bE_{p,n}\left[|K_v|e^{u|K_v|}\right] \leq -c_1(p-p_c)\bE_{p,n}\left[|K_v|^2e^{u|K_v|}\right] 
+ \frac{C_1}{t} \sM_n\bigl(-c_2(p-p_c)t,t,u;G,p\bigr)
\end{equation}
for every $v\in V$, $u\geq 0$, $p \in (p_c,p_c+\delta)$, $t\geq 0$ and $1 \leq n < \infty$.

\medskip
In order to apply the inequality \eqref{eq:diffineq_mgf2}, we will need to bound the generating function $\sM_n$. To do this, we derive a family of recursive differential inequalities, \cref{lem:differential_recursion}, which in the next subsection we will use to bound the functions $\sF_{k,n}$ by an inductive argument.

\medskip

 When $n<\infty$ all but finitely many terms of the sum defining $\mathscr{G}_{k,n}(s,t;G,v,p)$ are zero, so that  $\mathscr{G}_{k,n}(s,t;G,v,p)$ is a  differentiable function of $(s,t)$ with $t$-derivative
\begin{align*}
\partial_t \mathscr{G}_{k,n}(s,t;G,v,p) &= 
\sum_{a=0}^\infty \sum_{b=0}^\infty \sum_{\substack{x_1,\ldots,x_k\\ \in V(G)}} \bP_{p,n}^G\Bigl(x_1,\ldots,x_k \in K_v, E_v = a, \Bridges(v,x_1,\ldots,x_k;K_v) =b\Bigr) b e^{sa+tb}.
\\
&=
\bE_{p,n}\left[ \sum_{\substack{x_1,\ldots,x_k\\ \in K_v}} \Bridges(v,x_1,\ldots,x_k;K_v) e^{sE_v+t\Bridges(v,x_1,\ldots,x_k;K_v)}\right].
\end{align*}
The following lemma can be thought of as a sharp form of  \cite[Lemma 2.9]{HermonHutchcroftSupercritical}. 

\begin{lemma}
\label{lem:differential_recursion}
 Let $G$ be a countable graph with degrees bounded by $M$, let $v$ be a vertex of $G$ and let $p \in (0,1)$. Then 
\begin{equation*}
\partial_t \mathscr{G}_{k,n}(s,t;G,v,p) 
\leq \frac{Mp e^t}{1-p} \sum_{\ell=0}^{k-1} \binom{k}{\ell} \mathscr{G}_{\ell+1,n}(s,t;G,v,p) \mathscr{F}_{k-\ell,n}(s,t;G,p)
\end{equation*}
for every $k,n\geq 1$, and $s,t \in \R$.
\end{lemma}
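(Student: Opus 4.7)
The plan is to expand the bridge count $\Bridges$ as a sum over individual bridges, swap with the expectation, and then decompose the cluster at the chosen bridge into two pieces that can be analyzed independently. Starting from
\[
\partial_t \mathscr{G}_{k,n}(s,t;G,v,p) = \bE_{p,n}\!\left[\sum_{x_1,\ldots,x_k \in K_v} \Bridges(v,x_1,\ldots,x_k;K_v)\, e^{sE_v + t\Bridges(v,x_1,\ldots,x_k;K_v)}\right],
\]
I would write $\Bridges(v,x_1,\ldots,x_k;K_v) = \sum_{\vec{e}} \mathbbm{1}(A_{\vec{e}},\, e \text{ on the Steiner tree of } v,x_1,\ldots,x_k)$, where the sum ranges over oriented edges of $G$ and $A_{\vec{e}}$ is the event that $e$ is an open bridge of $K_v$ with $v$ on the $e^{-}$ side. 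Swapping sums, the contribution of each $\vec{e}$ involves the decomposition that holds on $A_{\vec{e}}$, namely $K_v = K_v^{(1)} \sqcup K_v^{(2)} \sqcup \{e\}$ with $v,e^{-} \in V(K_v^{(1)})$ and $e^{+} \in V(K_v^{(2)})$; the Steiner-tree condition then becomes $\exists i : x_i \in V(K_v^{(2)})$.

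Next I would partition the tuples $(x_1,\ldots,x_k)$ by the subset $I$ of indices landing in $K_v^{(2)}$. If $|J| = \ell$ with $J = [k] \setminus I$, there are $\binom{k}{\ell}$ such partitions, and along each split the bridge count factorizes as
\[
\Bridges(v, x_1, \ldots, x_k; K_v) = \Bridges(v, e^{-}, (x_j)_{j \in J}; K_v^{(1)}) + 1 + \Bridges(e^{+}, (x_i)_{i \in I}; K_v^{(2)}),
\]
producing the $e^{t}$ factor in the target inequality and two decoupled bridge exponentials. Writing $E_v^{(i)}$ for the edges of $G$ touching $V(K_v^{(i)})$, the elementary bound $E_v \le E_v^{(1)} + E_v^{(2)}$ lets me replace $e^{sE_v}$ by the product $e^{sE_v^{(1)}} e^{sE_v^{(2)}}$ in the regime where this is an upper bound. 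Similarly, $E_v^{(i)} \le E_v$ means that the truncation $\mathbbm{1}(E_v \le n)$ is controlled by $\mathbbm{1}(E_v^{(1)} \le n)\mathbbm{1}(E_v^{(2)} \le n)$.

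The key reorganization is the edge-switching identity
\[
\bE_p\bigl[\mathbbm{1}(A_{\vec{e}}) F\bigr] = \frac{p}{1-p}\, \bE_p\bigl[\mathbbm{1}(\omega(e)=0,\, v \leftrightarrow e^{-},\, v \not\leftrightarrow e^{+})\, F^{+}\bigr],
\]
where $F^{+}$ evaluates $F$ after forcing $e$ open. In the right-hand expectation the cluster of $v$ is literally $K_v^{(1)}$ and the cluster of $e^{+}$ is literally $K_v^{(2)}$, and these are two disjoint clusters of the \emph{same} percolation configuration $\omega$. By the Markov property of Bernoulli percolation, conditionally on the cluster $K_v$ (playing the role of $K_v^{(1)}$), the cluster of $e^{+}$ has the law of the cluster of $e^{+}$ in Bernoulli-$p$ percolation on the induced subgraph $G - V(K_v)$. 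Bounding the resulting $K_v^{(2)}$-side sum $\sum_{(x_i)_{i \in I}} e^{sE_v^{(2)} + t\Bridges_2}$ by a supremum over subgraphs and roots produces the factor $\mathscr{F}_{k-\ell,n}(s,t;G,p)$. The $K_v^{(1)}$-side sum, now just an expectation over the cluster of $v$ in $G$, sums freely over the extra marked vertex $a = e^{-} \in V(K_v)$ together with the $\ell$ points $(x_j)_{j\in J}$, giving (a quantity bounded by) $\mathscr{G}_{\ell+1,n}(s,t;G,v,p)$. The summation over $\vec{e}$ thus reduces to summing over $a \in V(K_v)$ together with its at most $M$ neighbours $b = e^{+}$, contributing the factor $M$, and summing over $\ell$ with the binomial coefficient yields the stated bound.

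The main obstacle is the careful bookkeeping of the $E_v$ term: the refined identity is $E_v = E_v^{(1)} + E_v^{(2)} - \#\{\text{spanning edges between } V(K_v^{(1)}) \text{ and } V(K_v^{(2)})\}$, and the inequality $E_v \le E_v^{(1)} + E_v^{(2)}$ is only genuinely an upper bound on $e^{sE_v}$ when $s \ge 0$; for the $s \le 0$ case the discrepancy (which involves only the guaranteed spanning edge $e$ up to closed edges whose number can be absorbed into the Markov step) must be tracked, and the proof should be organized so that this mismatch is inert — this is essentially an accounting check rather than a genuine difficulty once the edge-switching identity has been applied.
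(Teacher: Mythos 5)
Your proposal follows essentially the same route as the paper's proof: expand $\Bridges$ as a sum of indicator events indexed by oriented bridges, partition the tuple $(x_1,\ldots,x_k)$ by which side of the bridge each $x_i$ lands on (giving $\binom{k}{\ell}$), switch the bridge from open to closed to pay the factor $p/(1-p)$, and decouple the two sides via the Markov property, reading off $\mathscr G_{\ell+1,n}$ on the root side and $\mathscr F_{k-\ell,n}$ on the far side (with the factor $M$ coming from the choice of $e$ with $e^-=y_{\ell+1}$ fixed). The one place where the proposal is not actually watertight is exactly the point you flag at the end, and you should not wave it away: the lemma is stated for all $s\in\R$ and is applied in the paper with $s<0$ (see the use in \eqref{eq:diffineq_mgf2}, where $s=-c_2(p-p_c)t$), so the direction of the inequality $e^{sE_v}\le e^{s(E_v^{(1)}+E_v^{(2)})}$ matters and your ``in the regime where this is an upper bound'' caveat excludes precisely the case that is needed. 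The clean way to make this inert, which is what the paper does, is to define the far-side edge count asymmetrically from the outset --- let $E_e^+$ be the number of edges of $G$ touching $K_e^+$ \emph{but not} touching $K_e^-$ --- so that $E_v=E_e^-+E_e^+$ is an \emph{exact} identity whenever $e$ is a bridge, with no sign condition on $s$. This asymmetric count is also precisely the quantity that the Markov step produces (the number of edges of the induced subgraph $G-V(K_e^-)$ touching the cluster of $e^+$), which is the observation your last sentence gestures at but does not carry out; once you make that definition explicit, the truncation also decomposes exactly ($E_v\le n$ iff $E_e^-\le n$ and $E_e^+\le n-E_e^-$, then bound $\mathscr F_{k-\ell,\,n-E_e^-}\le\mathscr F_{k-\ell,n}$ by monotonicity in $n$) and the argument closes for all $s$.
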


Intuitively, this lemma encodes the fact that we can break up the exploration of a cluster at a bridge edge, first exploring the part of the cluster on the same side of the bridge as the root vertex $v$ then exploring the part of the cluster on the other side of the bridge.
Note that the more complicated form of this inequality will, unfortunately,  make it rather more difficult to analyze than that of \cite[Lemma 2.9]{HermonHutchcroftSupercritical}.



\begin{proof}[Proof of \cref{lem:differential_recursion}]
Fix $k,n \geq 1$, $p \in (0,1)$, $v\in V$, and $s,t \in \R$. For each $a,b \geq 0$ let
\[
R_{k,n}(a,b;G,v,p) = \sum_{\substack{x_1,\ldots,x_k \\\in V(G)}} b \bP_{p}^G\Bigl(x_1,\ldots,x_k \in K_v, E_v = a, \Bridges(v,x_1,\ldots,x_k;K_v) =b\Bigr),
\]
so that
\[
\partial_t \mathscr{G}_{k,n}(s,t;G,v,p)\\ = 
\sum_{a=0}^\infty \sum_{b=0}^\infty e^{sa+tb} R_{k,n}(a,b;G,v,p).
\]

\begin{figure}
\centering
\includegraphics[width=0.5\textwidth]{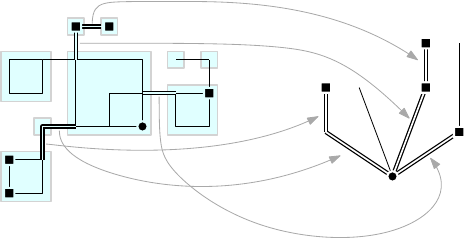}
\caption{Left: A percolation cluster with a root $v$ (black disc) and five marked points $x_1,\ldots,x_5$ (black squares). The two-connected components of the cluster are represented by blue shaded regions. Edges for which the event $\sA_e(x_1,\ldots,x_5)$ holds (for one of the possible orientations of the edge) are drawn as double black lines. Right: The tree of two-connected components of the cluster. Edges contributing to $\Bridges(v,x_1,\ldots,x_5)$ are drawn with double black lines. }
\label{fig:A_e}
\end{figure}

For each oriented edge $e$ of $G$, 
let $K_{e}^-$ and $K_{e}^+$ be the connected components of $e^-$ and $e^+$ in the subgraph of $G$ spanned by the open edges of $G$ other than $e$.
 Thus, $K_{e}^- \neq K_{e}^+$ if and only if $e^-$ and $e^+$ are not connected to each other by an open path not containing $e$.
Let $E_{e}^-$ be the number of edges of $G$ that touch $K_{e}^-$, and let $E_{e}^+$ be the number of edges of $G$ that touch $K_{e}^+$ but do not touch $K_{e}^-$. 
%
For each oriented edge $e$ of $G$ and each $x_1,\ldots,x_k \in V$, let $\sA_e(x_1,\ldots,x_k)$ be the event that $x_1,\ldots,x_k \in K_v$, that $e$ is open, that $v\in K_e^-$, and that there exists $1 \leq i \leq k$ such that $x_i \in K_e^+ \setminus K_e^-$. For each $x_1,\ldots,x_k \in K_v$, the number of oriented edges $e$ such that $\sA_e(x_1,\ldots,x_k)$ holds is precisely $\Bridges(v,x_1,\ldots,x_k;K_v)$ (see \cref{fig:A_e}), so that
we can write
\begin{equation*}
R_{k,n}(a,b;G,v,p) = 
 \sum_{e\in E^\rightarrow } \sum_{\substack{x_1,\ldots,x_k \\ \in V(G)}} \bP_{p,n}^G\Bigl(\sA_e(x_1,\ldots,x_k),  E_v = a, \Bridges(v,x_1,\ldots,x_k;K_v) =b\Bigr).
\end{equation*}
For each strict (possibly empty) subset $A$ of $\{1,\ldots,k\}$, let $\sB_e(x_1,\ldots,x_k;A)$ be the event that $\sA_e(x_1,\ldots,x_k)$ holds and that $x_i \in K_{e}^-$ if and only if $i \in A$ for each $1\leq i \leq k$. Then we can expand
\begin{multline}
R_{k,n}(a,b;G,v,p)=\\ 
\sum_{e\in E^\rightarrow}  \sum_{\substack{x_1,\ldots,x_k \\\in V(G)}} 
\sum_{A \subset \{1,\ldots,k\}} \bP_{p,n}^G\Bigl(\sB_e(x_1,\ldots,x_k;A),  E_v = a, \Bridges(v,x_1,\ldots,x_k;K_v) =b\Bigr).
\label{eq:Brecursion}
\end{multline}
Since the value of this sum does not change when we permute the values of $x_1,\ldots,x_k$, each set $A$ of a given size $\ell$ contributes the same amount to this sum, so that
\begin{multline}
R_{k,n}(a,b;G,v,p)= \\ 
 \sum_{e\in E^\rightarrow} \sum_{\substack{x_1,\ldots,x_k \\\in V(G)}} \sum_{\ell=0}^{k-1}\binom{k}{\ell} \bP_{p,n}^G\Bigl(\sB_e\bigl(x_1,\ldots,x_k;\{1,\ldots,\ell\}\bigr),  E_v = a, \Bridges(v,x_1,\ldots,x_k;K_v) =b\Bigr),
\label{eq:Brecursion2}
\end{multline}
where we interpret $\{1,\ldots,\ell\}$ as the empty set when $\ell=0$.


For each $e \in E^\rightarrow$, $0 \leq \ell \leq k-1$, each $y_1,\ldots,y_\ell \in V(G)$, each $z_1,\ldots,z_{k-\ell} \in V(G)$, and each $a_1,a_2,b_1,b_2 \geq 0$, consider the events 
\begin{multline*}\sC_{e,\ell}(y_1,\ldots,y_\ell;a_1,b_1)
\\:= \Bigl\{\text{$v\in K_e^-$, $y_i \in K_{e}^-$  for every $1 \leq i \leq \ell$, $E_e^-=a_1$, and  $\Bridges(v,y_1,\ldots,y_\ell,e^-;K_{e}^-)=b_1$}\Bigr\}
\end{multline*} and 
\begin{multline*}\sD_{e,\ell}(z_1,\ldots,z_{k-\ell};a_2,b_2)
\\:= \Bigl\{\text{$z_i \in K_{e}^+ \setminus K_e^-$  for every $1 \leq i \leq k-\ell$, $E_e^+=a_2$, and  $\Bridges(e^+,z_1,\ldots,z_{k-\ell};K_{e}^+)=b_2$}\Bigr\},
\end{multline*}
where we recall that $E_{e}^-$ is the number of edges of $G$ that touch $K_{e}^-$ and $E_{e}^+$ is the number of edges of $G$ that touch $K_{e}^+$ but do not touch $K_{e}^-$. 
Observe that the event $\sB_e\bigl(x_1,\ldots,x_k;\{1,\ldots,\ell\}\bigr) \cap \{ E_v = a, |\Bridges(v,x_1,\ldots,x_k;K_v)| =b\}$ can be rewritten as the disjoint union
\begin{multline}
\sB_e\bigl(x_1,\ldots,x_k;\{1,\ldots,\ell\}\bigr) \cap \Bigl\{  E_v = a, |\Bridges(v,x_1,\ldots,x_k;K_v)| =b\Bigr\} \\= \bigcup_{a_1=0}^a \bigcup_{b_1=0}^{b-1} \left[\{e \text{ open}\} \cap \sC_{e,\ell}\Bigl(x_1,\ldots,x_\ell;a_1,b_1\Bigr) \cap \sD_{e,\ell}\Bigl(x_{\ell+1},\ldots,x_k;a-a_1,b-b_1-1\Bigr) \right].
\label{eq:B_as_CD_events}
\end{multline}
Indeed, this follows from the observation that if 
$\sB_e\bigl(x_1,\ldots,x_k;\{1,\ldots,\ell\}\bigr)$ holds then $E_v = E_e^- + E_e^+$ and $\Bridges(v,x_1,\ldots,x_k;K_v) = \Bridges(v,x_1,\ldots,x_\ell,e^-;K_e^-)+ \Bridges(e^+,x_{\ell+1},\ldots,x_k;K_e^+)+1$.
Noting that the random variable $\omega(e)$ is independent of the pair of random variables $(K_e^-,K_e^+)$, we deduce from \eqref{eq:Brecursion2} and \eqref{eq:B_as_CD_events} that
\begin{multline*}
R_{k,n}(a,b;G,v,p) = p\sum_{\ell=0}^{k-1} \binom {k}{\ell} \sum_{e\in E^\rightarrow} \sum_{a_1=0}^a \sum_{b_1=0}^{b-1}\sum_{\substack{y_1,\ldots,y_\ell \\\in V(G)}} \sum_{\substack{z_1,\ldots,z_{k-\ell} \\\in V(G)}} \\ \bP_{p,n}^G\left(\sC_{e,\ell}\bigl(y_1,\ldots,y_\ell;a_1,b_1\bigr) \cap \sD_{e,\ell}\bigl(z_1,\ldots,z_{k-\ell};a-a_1,b-b_1-1\bigr)\right)
\end{multline*}
and hence that
\begin{multline}
\partial_t \sG_{k,n}(s,t;G,v,p) = pe^t\sum_{\ell=0}^{k-1} \binom {k}{\ell} \sum_{e\in E^\rightarrow} \sum_{a_1=0}^\infty \sum_{b_1=0}^\infty\sum_{\substack{y_1,\ldots,y_\ell \\\in V(G)}} \sum_{a_2=0}^\infty \sum_{b_2=0}^\infty\sum_{\substack{z_1,\ldots,z_{k-\ell} \\\in V(G)}}e^{sa_1+tb_1} e^{sa_2+tb_2}\\ \bP_{p,n}^G\left(\sC_{e,\ell}\bigl(y_1,\ldots,y_\ell;a_1,b_1\bigr) \cap \sD_{e,\ell}\bigl(z_1,\ldots,z_{k-\ell};a_2,b_2\bigr)\right) .
\end{multline}

Let $\cF_e^-$ be the $\sigma$-algebra generated by the random variable $K_e^-$ and let $H_e^+$ be the random subgraph of $G$ spanned by those edges of $G$ that do not touch $K_e^-$. The conditional distribution of $K_e^+ \setminus K_e^-$ given $\cF_e^-$ coincides with that of the cluster of $e^+$ in Bernoulli-$p$ bond percolation on $H_e^+$, so that
\begin{align*}
\sum_{a_2=0}^\infty &\sum_{b_2=0}^\infty \sum_{\substack{z_1,\ldots,z_{k-\ell} \\\in V(G)}} \bP_{p,n}^G\left(\sD_{e,\ell}\bigl(z_1,\ldots,z_{k-\ell};a_2,b_2\bigr) \mid \cF_e^-\right) e^{sa_2+tb_2}
\\
&= \sum_{\substack{z_1,\ldots,z_{k-\ell} \\\in V(H_e^+)}} \bP_{p,n-E_v^-}^{H_e^+}\left(z_1,\ldots,z_{k-\ell} \in K_{e^+}, E_{e^+} = a_2, \Bridges(e^+,z_1,\ldots,z_{k-\ell};K_{e^+})=b_2\right) e^{sa_2+tb_2}
\\&= \sG_{k-\ell,n-E_v^-}(s,t;H^+_e,e^+,p) \leq \sF_{k-\ell,n-E_v^-}(s,t;G,p) \leq \sF_{k-\ell,n}(s,t;G,p)
\end{align*}
almost surely. Since $\sC_{e,\ell}\bigl(y_1,\ldots,y_\ell;a_1,b_1\bigr)$ is $\cF_e^-$-measurable, it follows that
\begin{multline}
\partial_t \sG_{k,n}(s,t;G,v,p) \leq pe^t\sum_{\ell=0}^{k-1} \binom {k}{\ell}
\sF_{k-\ell,n}(s,t;G,p)
  \\\sum_{e\in E^\rightarrow} \sum_{a_1=0}^\infty \sum_{b_1=0}^\infty
\sum_{\substack{y_1,\ldots,y_\ell \\\in V(G)}} \bP_{p,n}^G\left(\sC_{e,\ell}\bigl(y_1,\ldots,y_\ell;a_1,b_1\bigr)\right) e^{sa_1+tb_1}.
\label{eq:DtoF}
\end{multline}
On the other hand, since $\omega(e)$ is independent of $\cF_e^-$ we have that
\begin{align*}\bP_{p,n}^G\Bigl(\sC_{e,\ell}\bigl(&y_1,\ldots,y_\ell;a_1,b_1\bigr)\Bigr)\\ &= \frac{1}{1-p}\bP_{p,n}^G\left(\sC_{e,\ell}\bigl(y_1,\ldots,y_\ell;a_1,b_1\bigr) \cap \{e \text{ closed}\}\right) 
\\
&=\frac{1}{1-p}\bP_{p,n}^G\Bigl(\text{$e$ closed, $y_1,\ldots,y_\ell,e^- \in K_v$, $E_v=a_1$, and  $\Bridges(v,y_1,\ldots,y_\ell,e^-;K_{v})=b_1$}\Bigr)\\
&\leq \frac{1}{1-p}\bP_{p,n}^G\Bigl(\text{$y_1,\ldots,y_\ell,e^- \in K_v$, $E_v=a_1$, and  $\Bridges(v,y_1,\ldots,y_\ell,e^-;K_{v})=b_1$}\Bigr) 
\end{align*}
and hence that
\begin{align}
&\sum_{e\in E^\rightarrow} \sum_{a_1=0}^\infty \sum_{b_1=0}^\infty
\sum_{\substack{y_1,\ldots,y_\ell \\\in V(G)}}  \bP_{p,n}^G\left(\sC_{e,\ell}\bigl(y_1,\ldots,y_\ell;a_1,b_1\bigr)\right) e^{sa_1+tb_1}
\nonumber 
\\
&\leq \frac{M}{1-p} \sum_{a_1=0}^\infty \sum_{b_1=0}^\infty
\sum_{\substack{y_1,\ldots,y_{\ell+1}\\ \in V(G)}}  \bP_{p,n}^G\left(y_1,\ldots,y_\ell,y_{\ell+1} \in K_v, E_v = a_1, \Bridges(v,y_1,\ldots,y_{\ell+1};K_v) =b_1\right) e^{sa_1+tb_1}
\nonumber
\\
&= \frac{M}{1-p}\sG_{\ell+1,n}(s,t;G,v,p).
\label{eq:CtoG}
\end{align}
Substituting \eqref{eq:CtoG} into \eqref{eq:DtoF} completes the proof.
\end{proof}

We will use \cref{lem:differential_recursion} in the following integral form.

\begin{corollary}
\label{cor:integral_recursion}
 Let $G$ be a countable graph with degrees bounded by $M$ and let $p \in (0,1)$. Then 
\begin{multline}
\label{eq:recursion2}
\sF_{k,n}(s,t_2;G,p) - \sF_{k,n}(s,t_1;G,p) \\\leq \frac{Mp}{1-p} \sum_{\ell=0}^{k-1} \binom{k}{\ell}\int_{t_1}^{t_2} e^t  \mathscr{F}_{\ell+1,n}(s,t;G,p) \mathscr{F}_{k-\ell,n}(s,t;G,p) \dif t
\end{multline}
for every $k,n\geq 1$ and $s,t_1,t_2 \in \R$ with $t_1 \leq t_2$.
\end{corollary}

\begin{proof}[Proof of \cref{cor:integral_recursion}] We have trivially that
\begin{align*}
\sF_{k,n}(s,t_2;G,p) - \sF_{k,n}(s,t_1;G,p) &= \sup_{H,v} \sG_{k,n}(s,t_2;H,v,p) - \sup_{H,v}\sG_{k,n}(s,t_1;H,v,p)\\
&\leq \sup_{H,v} \left[\sG_{k,n}(s,t_2;H,v,p) - \sG_{k,n}(s,t_1;H,v,p)\right]
\end{align*}
and applying \cref{lem:differential_recursion} yields that
\begin{align*}
\sF_{k,n}(s,t_2;G,p) - \sF_{k,n}(s,t_1;G,p)&\leq \sup_{H,v} \int_{t_1}^{t_2} \frac{Mp e^t}{1-p} \sum_{\ell=0}^{k-1} \binom{k}{\ell} \mathscr{G}_{\ell+1,n}(s,t;H,v,p) \mathscr{F}_{k-\ell,n}(s,t;H,p) \dif t.
\end{align*}
The claim follows since $\mathscr{G}_{\ell+1,n}(s,t;H,v,p) \mathscr{F}_{k-\ell,n}(s,t;H,p) \leq\mathscr{F}_{\ell+1,n}(s,t;G,p) \mathscr{F}_{k-\ell,n}(s,t;G,p)$ for every subgraph $H$ of $G$.
\end{proof}

\subsection{Bounding the positive term II: analysis of the auxiliary and main differential inequalities}
\label{subsec:positivetermII}

In this subsection we complete the proof of \cref{prop:vol_supercritical_upper}. The main step will be to prove the following proposition via an analysis of the recursive differential inequality provided by \cref{lem:differential_recursion}.
This proposition (or, more accurately, the intermediate inequality \eqref{eq:STP_F}) serves as a sharp quantitative version of \cite[Equation 2.21]{HermonHutchcroftSupercritical} under the additional assumption that $\nabla_{p_c}<\infty$. The generating function $\sM_n$ was defined in \eqref{eq:Mdef}.

\begin{prop} 
\label{prop:generating_function_estimate}
Let $G$ be an infinite, connected, locally finite, quasi-transitive graph such that $\nabla_{p_c} < \infty$, and let $\alpha \geq 0$. Then there exist positive constants $c_1=c_1(G,\alpha)$, $c_2=c_2(G,\alpha)$, $C=C(G,\alpha)$, and $\delta=\delta(G,\alpha)$ such that
\[
\sM_n(-c_1\eps^2,\alpha c_1\eps,c_2\eps^2;G,p_c+\eps) \leq C \eps^{-1}
\]
for every $n \geq 1$ and $0<\eps \leq \delta$.
\end{prop}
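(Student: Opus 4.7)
The approach is to bound $\sF_{k,n}$ uniformly in $k$ by analyzing the recursive differential inequality of \cref{lem:differential_recursion} through an exponential generating function, and then to sum the resulting coefficient bounds against $(c_2\eps^2)^k/k!$. I will proceed in three stages: pinning down the initial coefficients $\sF_{k,n}(-c_1\eps^2, 0; G, p_c+\eps)$ as functions of $k$ and $\eps$, propagating this bound forward in $t$ to $t = \alpha c_1\eps$, and performing the final sum.

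\textbf{Step 1 (initial conditions).} Since $\Bridges(v;K_v)=0$ for a single point, $\sG_{k,n}(s, 0; H, v, p) = \bE_{p,n}^H[|K_v|^k e^{sE_v}]$. Using that $|K_v|$ and $E_v$ are comparable under the degree bound on $G$, and that \cref{lem:inside_window} gives $\bP_p^H(E_v \geq m) \preceq m^{-1/2} \vee \eps$ uniformly in $H,v$ for $p = p_c + \eps$, integration by parts yields
\[\bE_p^H\bigl[E_v^k e^{-c_1\eps^2 E_v}\bigr] \preceq k\int_0^\infty (x^{-1/2} + \eps)\, x^{k-1} e^{-c_1\eps^2 x}\,dx = k\, \Gamma\!\bigl(k-\tfrac{1}{2}\bigr)(c_1\eps^2)^{-k+1/2} + k!\,\eps\,(c_1\eps^2)^{-k},\]
which is at most $C_0(c_1)^k\, k!\, \eps^{-2k+1}$. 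Hence $\sF_{k,n}(-c_1\eps^2, 0; G, p_c+\eps) \leq C_0^k k! \eps^{-2k+1}$ for all $k \geq 1$.

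\textbf{Step 2 (propagation in $t$).} Integrating the inequality of \cref{lem:differential_recursion} over $\tau \in [0,t]$ and then taking the supremum over subgraphs $H$ and vertices $v$ yields, for a graph-dependent constant $C$ and for $p$ bounded away from $1$,
\[\sF_{k,n}(s,t) \leq \sF_{k,n}(s,0) + C\int_0^t e^\tau \sum_{\ell=0}^{k-1} \binom{k}{\ell}\sF_{\ell+1,n}(s,\tau)\,\sF_{k-\ell,n}(s,\tau)\,d\tau.\]
Encoding this information in the exponential generating function $A(t,u) := \sum_{k \geq 1} \sF_{k,n}(-c_1\eps^2, t; G, p_c+\eps)\, u^k/k!$ converts the recursion into the quasilinear PDE inequality $\partial_t A \leq Ce^t\, A\, \partial_u A$. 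I will construct a Burgers-type supersolution
\[\tilde A(t,u) := \frac{B(t)\, u}{1 - D u}, \qquad D := \frac{2C_0}{\eps^2},\qquad B(0) := \frac{C_0}{\eps},\]
with $B(t)$ satisfying the Riccati equation $B'(t) = 4 C e^t B(t)^2$. On the region $u \in [0, c_2\eps^2]$ with $c_2 \leq 1/(4C_0)$ we have $Du \leq 1/2$, and the Riccati equation gives $B(t) \leq 2B(0)$ for all $t \leq \eps/(8CC_0)$, so taking $c_1 \leq 1/(8\alpha CC_0)$ ensures the supersolution inequality holds throughout $[0,\alpha c_1\eps] \times [0, c_2\eps^2]$. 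Termwise, the bound from Step 1 gives $A(0,\cdot) \leq \tilde A(0,\cdot)$, and a standard comparison argument for the quasilinear inequality (e.g.\ along the characteristics $du/dt = -Ce^t \tilde A$, or by truncating the power series and using a maximum principle) then yields $A \leq \tilde A$ on the same region, so that
\[\sF_{k,n}(-c_1\eps^2, \alpha c_1 \eps; G, p_c+\eps) = k!\cdot [u^k]\, A(\alpha c_1\eps,\cdot) \leq 2\,B(0)\,D^{k-1}\,k! \leq (2C_0)^k\, k!\, \eps^{-2k+1}.\]

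\textbf{Step 3 (summation) and main obstacle.} Writing $C_1 := 2C_0$, the definition of $\sM_n$ gives
\[\sM_n(-c_1\eps^2, \alpha c_1 \eps, c_2\eps^2; G, p_c+\eps) \leq \sum_{k\geq 0} (c_2\eps^2)^k\, C_1^{k+1}(k+1)\,\eps^{-2k-1} = C_1\eps^{-1}\sum_{k\geq 0}(k+1)(C_1 c_2)^k,\]
which is at most $4C_1\eps^{-1}$ once $c_2$ is taken small enough that $C_1 c_2 \leq 1/2$, completing the proof. The main obstacle is Step 2: the quadratic nonlinearity $A\,\partial_u A$ forces any candidate supersolution to blow up in finite time, and the argument only closes because that blow-up time is of the correct order $\Theta(\eps)$, matching the required propagation length $\alpha c_1\eps$. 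The ansatz $Bu/(1-Du)$ with $D$ of order $\eps^{-2}$ and $B(0)$ of order $\eps^{-1}$ is dictated precisely by the scaling $\sF_{k,n}(0) \asymp k!\,\eps^{-2k+1}$ of the initial data, which itself reflects the boundary of the scaling window. Making the comparison principle rigorous requires some care because $\sF_{k,n}$ is defined as a supremum and need not be classically differentiable in $t$, but the integral form of the recursion together with a polynomial truncation of $A$ bypasses this issue without changing the essential content.
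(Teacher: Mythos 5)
Your Step 1 and Step 3 are fine, but Step 2 has two genuine gaps, the first of which reflects a missing ingredient rather than a technical lapse. Your argument makes no use of the skinny-clusters estimate (\cref{lem:SkinnyRadius}), which the paper feeds in through \cref{lem:base_case}, and this omission is fatal for general $\alpha$. The implicit $k=1$ content of your supersolution — that $B(t)\le 2B(0)$ on $[0,\alpha c_1\eps]$ — is a Riccati comparison for $\sF_{1,n}(-c_1\eps^2,\cdot)$, and combined with the $t=0$ bound $\sF_{1,n}(-c_1\eps^2,0)\preceq c_1^{-1}\eps^{-1}$ it only works when $\alpha$ is at most a graph-dependent constant: both the Riccati blow-up time and the required time window $\alpha c_1\eps$ scale linearly in $c_1$, so shrinking $c_1$ doesn't enlarge the admissible $\alpha$. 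Concretely, your constraint $c_1 \le 1/(8\alpha C C_0)$ together with the $c_1^{-1}$ hidden inside $C_0$ forces $\alpha\lesssim_G 1$. The paper needs arbitrary $\alpha\ge 0$ (in the application $\alpha=c_2^{-1}$ where $c_2$ comes from \cref{prop:NegativeTermQuant} and may be small), and \cref{lem:base_case}, whose proof crucially invokes \cref{lem:SkinnyRadius}, is precisely what lifts the $k=1$ bound to all $\alpha$.

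The second gap is the comparison principle itself. The inequality from \cref{lem:differential_recursion} is coefficient-wise, so the natural comparison is coefficient-wise, but your ansatz $\tilde A=Bu/(1-Du)$ fails it: $\partial_t\tilde a_k=B'D^{k-1}$ while $Ce^t\sum_{i+j=k+1}i\,\tilde a_i\tilde a_j=Ce^tB^2D^{k-1}k(k+1)/2$, so you would need $B'\ge Ce^tB^2 k(k+1)/2$ for every $k$, which is impossible. Retreating to a value-wise comparison on $[0,T]\times[0,U]$ doesn't close either: the characteristics of $\partial_tA=Ce^tA\partial_uA$ move toward smaller $u$, so $u=U$ is an inflow boundary and the maximum-principle argument needs data there, which you don't have (at a first-touching point with $u^*=U$ one only gets $\partial_u(\tilde A-A)\le 0$, the wrong sign); and pushing $U$ out to the pole $1/D$ fails because the supersolution inequality $B'\ge Ce^tB^2/(1-Du)^2$ degenerates as $Du\uparrow1$. (There is also the smaller point that $A\le\tilde A$ on an interval doesn't give coefficient domination $[u^k]A\le[u^k]\tilde A$, but that can be patched by bounding $\sM_n=\partial_uA$ at a slightly smaller argument.) The paper avoids all of this by inducting directly on $k$ with the hypothesis $\sF_k\le C^k e^{C(k-1)\theta}\eps^{-2k+1}(k-2)!$; the $e^{C(k-1)\theta}$ factor, which grows with $k$, is exactly what a coefficient-wise supersolution must have, the $(k-2)!$ (not $k!$) makes the binomial sum over $\ell$ close, and the $k=1$ base case is supplied by \cref{lem:base_case}. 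The corresponding generating function is $\eps e^{-C\theta}\sum_k w^k/(k(k-1))$ with $w=Ce^{C\theta}u/\eps^2$, which stays bounded as $w\uparrow 1$ — a qualitatively different shape from the simple pole.
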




Before proceeding further, let us see how \cref{prop:generating_function_estimate} can be used to complete the proof of \cref{prop:vol_supercritical_upper}.

\begin{proof}[Proof of \cref{prop:vol_supercritical_upper} given \cref{prop:generating_function_estimate}]
Fix $v\in V$. 
By \eqref{eq:diffineq_mgf2}  there exist positive constants $c_1$, $c_2$, $\delta_1$, and $C_1$ such that
\begin{align*}
\partial_p \bE_{p,n}\left[|K_v|e^{u|K_v|}\right] 
&\leq -c_1(p-p_c)\bE_{p,n}\left[|K_v|^2e^{u|K_v|}\right] 
+ \frac{C_1}{t} \sM_n\bigl(-c_2(p-p_c)t,t,u;G,p\bigr)
\\
&= -c_1(p-p_c)\partial_u \bE_{p,n}\left[|K_v|e^{u|K_v|}\right] 
+ \frac{C_1}{t} \sM_n\bigl(-c_2(p-p_c)t,t,u;G,p\bigr)
\end{align*}
for every $u\geq 0$, $p \in (p_c,p_c+\delta_1)$, $t\geq 0$ and $1 \leq n < \infty$. On the other hand, applying \cref{prop:generating_function_estimate} with $\alpha = c_2^{-1}$ yields that there exist positive constants $c_3$, $c_4$, $\delta_2$, and $C_2$ such that
\begin{align*}
\sM_n\bigl(-c_2c_3(p-p_c)^2,c_3(p-p_c),u;G,p\bigr)  &\leq \sM_n\bigl(-c_2c_3(p-p_c)^2,c_3(p-p_c),c_4(p-p_c)^2;G,p\bigr)\\ &\leq C_2 (p-p_c)^{-1}
\end{align*}
for every $p \in (p_c,p_c+\delta_2)$ and $0 \leq u \leq c_4(p-p_c)^2$. It follows that there exists a constant $C_3$ such that
\begin{align}
\partial_p \bE_{p,n}\left[|K_v|e^{u|K_v|}\right] 
&\leq-c_1(p-p_c) \partial_u \bE_{p,n}\left[|K_v|e^{u|K_v|}\right] 
+ C_3 (p-p_c)^{-2}
\label{eq:pde_endgame}
\end{align}
for every $p\in (p_c,p_c+\delta_1 \wedge \delta_2)$ and $0 \leq u \leq c_4(p-p_c)^2$. 

\medskip

By \cref{cor:good_points}, there exists $\delta_3>0$ and $C_4 < \infty$ such that for every $0<\eps \leq \delta_3$ there exists $p_0=p_0(\eps) \in (p_c+\eps/4,p_c+\eps/2)$ such that $\bE_{p_0,\infty}|K_v| \leq C_4 \eps^{-1}$. Let $\delta = \min\{\delta_1,\delta_2,\delta_3\}$
and $c_5=(c_1 \wedge c_4)/2$. Let $0<\eps \leq \delta$ and let $p_0=p_0(\eps)$. It follows by the chain rule that
\[\frac{d}{dp} \bE_{p,n}\left[|K_v|\exp\left[c_5(p-p_0)^2|K_v|\right]\right] \leq C_3(p-p_c)^{-2} \]
for every $n\geq 1$ and $p_0 \leq p \leq p_c+\eps$.
Integrating this differential inequality between $p_0$ and $p_c+\eps$ and noting that $\eps/2 \leq p_c+\eps -p_0 \leq \eps$ yields that
\begin{align*}
\bE_{p_c+\eps,n}\left[|K_v|\exp\left[\frac{1}{4}c_5\eps^2|K_v|\right]\right]&\leq 
\bE_{p_c+\eps,n}\left[|K_v|\exp\left[c_5(p_c+\eps-p_0)^2|K_v|\right]\right] 
\\
&=
\bE_{p_0}\left[|K_v|\right] + \int_{p_0}^{p_c+\eps} \frac{d}{dp} \bE_{p,n}\left[|K_v|\exp\left[c_5(p-p_0)^2|K_v|\right]\right] \dif p 
\\
&\leq 
 C_4\eps^{-1} + \int_{p_0}^{p_c+\eps} C_3 (p-p_c)^{-2} \dif p
 \leq (C_4+16C_3)\eps^{-1}
\end{align*}
for every $n\geq 1$ and $0< \eps \leq \delta$ as required. \qedhere

\end{proof}

We now begin to work towards the proof of \cref{prop:generating_function_estimate}, which will rely on an inductive analysis of the integral inequality of \cref{cor:integral_recursion}. This analysis will require the following two lemmas as input: The first applies \cref{lem:inside_window} to analyze $\sF_{k,\infty}$ when $t=0$ and $s< 0$, and the second applies \cref{lem:SkinnyRadius} to establish the $k=1$ base case.

\begin{lemma}[Boundary conditions]
\label{lem:t_zero}
Let $G$ be an infinite, connected, locally finite quasi-transitive graph such that $\nabla_{p_c} < \infty$. Then there exist positive constants $C$, and $\delta$ such that
\begin{equation}
\sF_{k,\infty}\bigl(-\lambda \eps^2,0;G,p_c+\eps\bigr) \leq k! C^k \lambda^{-k} \eps^{-2k+1}
\end{equation}
for every $k \geq 1$, $0<\eps \leq \delta$, and $0< \lambda \leq 1$.
\end{lemma}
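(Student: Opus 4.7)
The plan is to observe that the hypothesis $t=0$ collapses the generating function to a simple exponential moment of $|K_v|$, and then to bound this moment directly using the layer cake formula together with \cref{lem:inside_window}.

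First, I would note that directly from the definition of $\sG_{k,n}$, the factor $e^{tb}$ becomes $1$ when $t=0$, so that
\[
\sG_{k,\infty}(s,0;G,v,p) \;=\; \bE_{p,\infty}^{G}\!\left[|K_v|^k\, e^{sE_v}\right],
\]
and therefore $\sF_{k,\infty}(-\lambda\eps^2,0;G,p) = \sup_{H,u} \bE_{p,\infty}^{H}\!\left[|K_u|^k e^{-\lambda\eps^2 E_u}\right]$, where the supremum is over subgraphs $H$ of $G$ and vertices $u$ of $H$. Since any connected cluster satisfies $|K_u| \le E_u + 1$, we have $|K_u|^k \le 2^k(E_u^k+1)$ and it suffices to bound $\bE_{p,\infty}^H[E_u^k e^{-\lambda\eps^2 E_u}]$ uniformly in $H$ and $u$ by $k! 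C^k \lambda^{-k}\eps^{-2k+1}$ (the additive $1$ is easily absorbed).

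Next, I would apply the layer-cake formula with $f(y) = y^k e^{-\lambda\eps^2 y}$. Since $f'(y) = (ky^{k-1}-\lambda\eps^2 y^k)e^{-\lambda\eps^2 y} \le k y^{k-1}e^{-\lambda\eps^2 y}$, we get
\[
\bE_{p,\infty}^H\!\left[E_u^k e^{-\lambda\eps^2 E_u}\right] \;\le\; k\int_0^\infty y^{k-1} e^{-\lambda\eps^2 y}\, \bP_{p}^H(E_u \ge y)\, \dif y.
\]
Then \cref{lem:inside_window}, which applies because $\nabla_{p_c}<\infty$, gives $\bP_{p}^H(E_u \ge y) \le C(y^{-1/2}+\eps)$ uniformly in $H$ and $u$, valid for $p = p_c+\eps$ with $\eps$ smaller than some $\delta$. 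Splitting the resulting integral and evaluating via the substitution $w = \lambda\eps^2 y$ yields
\[
k\int_0^\infty y^{k-3/2}e^{-\lambda\eps^2 y}\,\dif y \;=\; k(\lambda\eps^2)^{-k+1/2}\Gamma(k-\tfrac12),
\qquad
k\eps\int_0^\infty y^{k-1} e^{-\lambda\eps^2 y}\,\dif y \;=\; \eps \cdot k!\cdot (\lambda\eps^2)^{-k}.
\]

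Finally, using the elementary estimate $k\Gamma(k-\tfrac12) = \Gamma(k+\tfrac12) \le \sqrt{\pi}\,k!$, the first term is bounded by $\sqrt{\pi}\,k!\,\lambda^{-k+1/2}\eps^{-2k+1}$, and the second is exactly $k!\,\lambda^{-k}\eps^{-2k+1}$. Since we assume $\lambda\le 1$, the factor $\lambda^{-k+1/2}\le \lambda^{-k}$, so both terms are of the claimed order $k! C^k \lambda^{-k}\eps^{-2k+1}$ for a suitable graph-dependent $C$. The argument is essentially bookkeeping; no serious obstacle arises, since the only nontrivial input is the two-point estimate provided by \cref{lem:inside_window} and the resulting integrals are standard Gamma integrals whose constants combine harmlessly into $C^k$.
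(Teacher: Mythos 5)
Your proof is correct and follows essentially the same route as the paper's: rewrite $\sG_{k,\infty}(s,0)$ as $\bE_{p,\infty}[|K_v|^k e^{sE_v}]$, bound $|K_v|$ by $E_v+1$, apply the layer-cake/integration-by-parts identity, insert the uniform tail bound from \cref{lem:inside_window}, and evaluate the resulting Gamma integrals. (Minor aside: the identity $k\Gamma(k-\tfrac12)=\Gamma(k+\tfrac12)$ should read $(k-\tfrac12)\Gamma(k-\tfrac12)=\Gamma(k+\tfrac12)$, but the inequality $k\Gamma(k-\tfrac12)\le\sqrt{\pi}\,k!$ you actually use is still true and the argument is unaffected.)
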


\begin{lemma}[Base case]
\label{lem:base_case}
Let $G$ be an infinite, connected, locally finite, quasi-transitive graph such that $\nabla_{p_c} < \infty$. Then there exist positive constants $c$, $C$, and $\delta$ such that 
\begin{equation}
\label{eq:generating_base}
\sF_{1,\infty}\bigl(-\lambda \eps^2, \alpha \lambda \eps;G,p_c+\eps\bigr) \leq C \lambda^{-1} \eps^{-1}
\end{equation}
for every $\alpha \geq 0$, $0<\eps \leq \delta$, and $0< \lambda \leq 1 \wedge c \alpha^{-1} e^{-C \alpha}$.
\end{lemma}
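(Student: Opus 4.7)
The first step would be to observe that $\Bridges(v,x_1;K_v) \leq d_{\mathrm{int}}(v,x_1) \leq R_v$ and $|K_v| \leq E_v+1$ (the latter since $K_v$ is connected so contains a spanning tree with $|K_v|-1$ open edges touching $K_v$), reducing the claim to an upper bound of the form
\[
Z_{H,v} := \bE_{p,\infty}^H\!\left[(E_v+1)\,e^{\alpha\lambda\eps R_v - \lambda\eps^2 E_v}\right] \leq \frac{C}{\lambda\eps}
\]
uniformly in subgraphs $H$ and vertices $v$ of $H$, after which taking suprema would yield the lemma.

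To bound $Z_{H,v}$, I would set $r_0 = \lceil 1/(\alpha\lambda\eps)\rceil$ and $\beta = 2\alpha/\eps$ and split the expectation over the three disjoint events $\mathcal{E}_1 = \{R_v < r_0\}$, $\mathcal{E}_2 = \{R_v \geq r_0,\ E_v \geq \beta R_v\}$, and $\mathcal{E}_3 = \{R_v \geq r_0,\ E_v < \beta R_v\}$. On $\mathcal{E}_1$ the weight $e^{\alpha\lambda\eps R_v}$ is at most $e$; on $\mathcal{E}_2$ the choice of $\beta$ yields $\alpha\lambda\eps R_v \leq \tfrac{1}{2}\lambda\eps^2 E_v$. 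In both cases one reduces to an expression of the form $\bE[(E_v+1) e^{-\kappa\lambda\eps^2 E_v}]$ with $\kappa \in \{1/2,1\}$, and Abel summation combined with the tail estimate $\bP_p^H(E_v \geq n) \preceq n^{-1/2}+\eps$ of Lemma~\ref{lem:inside_window} bounds this by $C/(\lambda\eps)$, using that both $\sum_n n^{-1/2} e^{-\kappa\lambda\eps^2 n} \asymp 1/(\eps\sqrt{\lambda})$ and $\eps \sum_n e^{-\kappa\lambda\eps^2 n} \asymp 1/(\lambda\eps)$ are of the desired order for $\lambda \leq 1$.

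The main work will be on $\mathcal{E}_3$, where I would bound $|K_v| \leq \beta R_v + 1 \preceq (\alpha/\eps) R_v$ and apply Proposition~\ref{lem:SkinnyRadius} with its skinny parameter $\alpha' := \beta = 2\alpha/\eps$ and its auxiliary parameter $\mu := \eps$ to obtain
\[
\bP_p^H(R_v \geq r,\ E_v \leq \beta R_v) \leq C(r^{-1}+\eps)\exp(-\tau\eps r), \qquad \tau := c e^{-2C\alpha}.
\]
Choosing the constants in the lemma so that $\lambda \leq c\alpha^{-1} e^{-C\alpha}$ implies $\alpha\lambda \leq \tau/2$, we obtain $e^{\alpha\lambda\eps r - \tau\eps r} \leq e^{-\tau\eps r/2}$. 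Abel summation applied to $R_v e^{\alpha\lambda\eps R_v}$ against this tail then produces a boundary contribution at $r_0$ of size $O(1/(\lambda\eps))$ --- exploiting that $\alpha\lambda\eps r_0 = 1$ and $\tau\eps r_0 \geq 2$ --- together with a telescoping remainder of size $O((\alpha/\eps)\tau^{-2})$; the same constraint guarantees $\lambda \alpha \tau^{-2} = O(1)$, so this last contribution is also absorbed into $C/(\lambda\eps)$.

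The main obstacle will be this final case $Z_3$: controlling it requires the joint tail estimate on $(R_v, E_v)$ from Proposition~\ref{lem:SkinnyRadius}, since using only the marginal tail of $R_v$ together with a Cauchy--Schwarz splitting gives a bound of order $1/(\lambda\eps^{3/2})$, a factor of $\eps^{-1/2}$ too large. Moreover, the interaction between the prefactor $\alpha/\eps$ (arising from $|K_v| \leq \beta R_v$ on $\mathcal{E}_3$) and the $\alpha$-dependent skinny exponent $\tau = ce^{-2C\alpha}$ is precisely what forces the specific form of the constraint $\lambda \leq c\alpha^{-1} e^{-C\alpha}$ that appears in the hypothesis.
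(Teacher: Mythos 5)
Your proposal is correct and follows essentially the same route as the paper: reduce to bounding $\bE[(E_v+1)e^{\alpha\lambda\eps R_v-\lambda\eps^2 E_v}]$, split according to whether the cluster is skinny (that is, $E_v \lessgtr (2\alpha/\eps)R_v$), handle the non-skinny part via the $t=0$ tail bound, and handle the skinny part by combining $E_v+1 \preceq (\alpha/\eps) R_v$ with \cref{lem:SkinnyRadius} applied at scale $\lambda'=\eps$, $\alpha'=2\alpha/\eps$ (so $\lambda'\alpha'=2\alpha$), precisely the step that forces the constraint $\lambda \preceq \alpha^{-1}e^{-C\alpha}$. The only cosmetic difference is the shape of the decomposition (your radius threshold $r_0 \approx 1/(\alpha\lambda\eps)$ versus the paper's $\eps^{-1}$, and a three-way split versus the paper's two-then-two split), which changes some constants but not the mechanism.
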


The proofs of both lemmas will use the fact that if $X$ is a non-negative random variable then
\begin{equation}
\label{eq:integration_by_parts}
\E\left[X^k e^{sX}\right] = \int_{0}^\infty (k+st)t^{k-1} e^{st} \P(X \geq t) \dif t
\end{equation}
for every $k\geq 1$ and $s \in \R$, where it is possible that both sides are equal to $+\infty$ when $s \geq 0$. This identity is a standard consequence of the integration-by-parts formula.

\begin{proof}[Proof of \cref{lem:t_zero}]
Let $p_c=p_c(G)$. We have by \cref{lem:inside_window} that there exist positive constants $C_1$ and $\delta$ such that
\[
\bP_{p_c+\eps}^H(1+E_v \geq u) \leq C_1\left[u^{-1/2} + \eps \right]
\]
for every subgraph $H$ of $G$, every vertex $v$ of $H$, every $u \geq 0$ and every $0<\eps \leq \delta$. Since $|K_v| \leq 1 + E_v$, we deduce by standard calculations that
\begin{align*}
\sG_{k,\infty}(-s,0;H,v,p_c+\eps) &= 
\sum_{a=0}^\infty \sum_{\substack{x_1,\ldots,x_k \\\in V(G)}} \bP_{p}^G\Bigl(x_1,\ldots,x_k \in K_v, E_v = a \Bigr) e^{-sa} = \bE_{p_c+\eps} \left[|K_v|^k e^{-sE_v}\right]
\\
&\leq
e^s\bE_{p_c+\eps}^H\left[(1+E_v)^k e^{-s (1+E_v)}\right] 
\\&= e^s \int_0^\infty (k-su) u^{k-1} e^{-su} \bP_p^H(1+E_v \geq u)  \dif  u\\
&
\leq  C_1 e^s  \left[\int_0^{\infty} k u^{k-3/2}  e^{-su} \dif u + \eps \int_{0}^\infty k u^{k-1}e^{-su}\dif u   \right]
\end{align*}
for every $s > 0$.
Using the identities  $\int_0^\infty u^{a-1} e^{-su} \dif u = s^{-a} \int_0^\infty y^{a-1} e^{-y} \dif y = s^{-a}\Gamma(a)$ and $\Gamma(k)=(k-1)!$ we obtain that
\begin{equation*}
\sG_{k,\infty}(-s,0;H,v,p_c+\eps) \leq C_1 e^s \left[k s^{-k+1/2} \Gamma(k-1/2)
+
k \eps s^{-k} \Gamma(k)  
  \right]\\ \leq  C_1 e^s k! \left[s^{-k+1/2}+ \eps s^{-k}  \right]
\end{equation*}
for every subgraph $H$ of $G$, every vertex $v$ of $H$, every $n \geq 0$, every $0<\eps \leq \delta$, and every $s \geq 0$. The claim follows by taking $s=\lambda \eps^2$.
\end{proof}


\begin{proof}[Proof of \cref{lem:base_case}]
Fix $\alpha \geq 0$, a subgraph $H$ of $G$ and a vertex $v$ of $H$. Letting $R_v$ denote the intrinsic radius of the cluster of $v$ in $H$ (i.e., the maximal intrinsic distance of a vertex of $K_v$ from $v$), we trivially have that $\Bridges(v,x;K_v) \leq R_v$ for every $x\in K_v$, so that
\begin{align*}
\sG_{1,\infty}(-\lambda \eps^2,\alpha \lambda \eps ; H,v,p_c+\eps) 
=
 \bE_{p_c+\eps}^H\left[e^{-\lambda \eps^2 E_v} \sum_{x \in K_v} e^{\alpha\lambda \eps\Bridges(v,x;K_v)}\right]
 \leq
  \bE_{p_c+\eps}^H\left[|K_v| e^{-\lambda \eps^2 E_v} e^{\alpha\lambda \eps R_v}\right]
\end{align*}
and hence that
\begin{align}
&\sG_{1,\infty}(-\lambda\eps^2,\alpha\lambda \eps ; H,v,p_c+\eps) 
\nonumber
\\
&\hspace{0.8cm}\leq \bE_{p_c+\eps}^H\left[|K_v| e^{-\lambda \eps^2 E_v} e^{\alpha\lambda \eps R_v}\mathbbm{1}\left(R_v \leq \frac{\eps E_v}{2 \alpha} \right)\right]+ \bE_{p_c+\eps}^H\left[|K_v| e^{-\lambda \eps^2 E_v} e^{\alpha\lambda \eps R_v}\mathbbm{1}\left(R_v > \frac{\eps E_v}{2 \alpha} \right)\right]
\nonumber
\\
 &\hspace{0.8cm}\leq
 \bE_{p_c+\eps}^H\left[|K_v| e^{- \lambda \eps^2 E_v /2} \right]
 +
  \bE_{p_c+\eps}^H\left[|K_v|  e^{\alpha\lambda \eps R_v}\mathbbm{1}\left(R_v > \frac{\eps E_v}{2 \alpha} \right)\right]
  \label{eq:base1}
\end{align}
for every $0<\eps \leq \delta_1$ and $0<\lambda \leq 1$.
For the first term, \cref{lem:t_zero} implies that there exist positive  constants $\delta_1$ and $C_1$ such that 
\begin{equation}
\bE_{p_c+\eps}^H\left[|K_v| e^{- \lambda \eps^2 E_v /2} \right] \leq \sF_{1,\infty}\left(-\lambda\eps^2/2,0;G,p_c+\eps\right) \leq C_1 \lambda^{-1} \eps^{-1}
\label{eq:base2}
\end{equation}
for every $0 < \eps \leq \delta_1$ and $0 < \lambda \leq 1$.

 For the second term, we first decompose further
\begin{align*}
  \bE_{p_c+\eps}^H\left[|K_v|  e^{\alpha\lambda \eps R_v}\mathbbm{1}\left(R_v > \frac{\eps E_v}{2 \alpha} \right)\right] 
  &=  \bE_{p_c+\eps}^H\left[|K_v|  e^{\alpha\lambda \eps R_v}\mathbbm{1}\left(R_v > \frac{\eps E_v}{2 \alpha}, R_v < \eps^{-1} \right)\right] 
  \\&\hspace{1.6cm}+
   \bE_{p_c+\eps}^H\left[|K_v|  e^{\alpha\lambda \eps R_v}\mathbbm{1}\left(R_v > \frac{\eps E_v}{2 \alpha}, R_v \geq \eps^{-1} \right)\right] 
  \\
   &=: \mathrm{I} + \mathrm{II},
\end{align*}
where the second inequality means that we write $\mathrm{I}$ and $\mathrm{II}$ for the first and second terms appearing on the right hand side of the first equality.
To bound the term $\mathrm{I}$, we apply \eqref{eq:ballwindow} to deduce that there exist positive constants $\delta_2$ and $C_2$ such that
\begin{equation}
\mathrm{I} \leq e^{\alpha \lambda} \bE_{p_c+\eps}^H \left[|K_v| \mathbbm{1}(R_v < \eps^{-1}) \right]
\leq  e^{\alpha \lambda} \bE_{p_c+\eps}^H \left[ \# B_\mathrm{int}(v,\lfloor \eps^{-1} \rfloor) \right] \leq C_2 \eps^{-1} e^{\alpha \lambda},
\label{eq:I}
\end{equation}
for every $0 \leq \eps \leq \delta_2$ and $0<\lambda \leq 1$. Finally, to bound the term $\mathrm{II}$, we note that
\[
\mathrm{II} \leq \frac{4\alpha}{\eps}\bE_{p_c+\eps}^H\left[R_v  e^{\alpha\lambda \eps R_v}\mathbbm{1}\left(R_v > \frac{\eps E_v}{2 \alpha}, R_v \geq \eps^{-1} \right)\right]
\]
(where we used the fact that $|K_v| \leq E_v+1 \leq 2E_v$ when $R_v \geq \eps^{-1}>0$)
and hence by \eqref{eq:integration_by_parts} that there exists a constant $C_3$ such that
\[
\mathrm{II} \leq C_3 \frac{\alpha}{\eps} \int_{\eps^{-1}}^\infty (1+\alpha\lambda \eps t) e^{\alpha\lambda\eps t} \bP_{p_c+\eps}^H\left(t \leq R_v < \infty,\, E_v < \frac{2\alpha}{\eps} R_v \right) \dif t.
\]
We then apply \cref{lem:SkinnyRadius}  to obtain that there exist positive constants $c_1$, $C_4$, $C_5$ and $\delta_3$ such that
\[
\mathrm{II} 
\leq C_5 \frac{\alpha}{\eps}\int_{\eps^{-1}}^\infty (\eps+\alpha\lambda \eps^2 t) \exp\left[-2c_1 e^{-2 C_4 \alpha} \eps t+\alpha\lambda \eps t\right] \dif t 
\]
for every $0<\eps \leq \delta_3$ and $0<\lambda \leq 1$.
We deduce in particular that if $0<\eps \leq \delta_3$ and $\alpha \lambda \leq c_1 e^{-2C_4 \alpha}$ then
\begin{align*}
\mathrm{II} 
&\leq C_5  \frac{\alpha}{\eps}\int_{\eps^{-1}}^\infty \bigl(\eps + \alpha \lambda \eps^2 t) \exp\left[-c_1 e^{-2 C_4 \alpha} \eps t\right] \dif t.
\end{align*}
Similarly to the proof of \cref{lem:t_zero}, using the identities
$\int_0^\infty e^{-st} \dif t = s^{-1}$ and $\int_0^\infty t e^{-st} \dif t = s^{-2}$ yields that there exists a constant $C_6$ such that
\begin{align}
\mathrm{II}
&\leq C_6 
 \frac{\alpha}{\eps}\left[
e^{2C_4 \alpha} + \alpha\lambda e^{4C_4 \alpha} \right] \leq C_6\alpha(1+\alpha \lambda) e^{4C_4 \alpha} \eps^{-1}
\label{eq:II}
\end{align}
%
%
for every $0<\eps \leq \delta_3$ and $0<\lambda \leq 1 \wedge c_1 \alpha^{-1} e^{-2C_4 \alpha}$. 

Putting together all the estimates \eqref{eq:base1}, \eqref{eq:base2}, \eqref{eq:I}, and \eqref{eq:II}, we obtain that there exist positive constants $\delta_4=\delta_1 \wedge \delta_2 \wedge \delta_3$ and $C_7$ such that
\[
\sG_{1,\infty}(-\lambda\eps^2,\alpha\lambda\eps;H,v,p_c+\eps) \leq C_7 \left[\lambda^{-1}  + e^{\alpha \lambda}  + \alpha(1+\alpha \lambda) e^{4C_4 \alpha} \right] \eps^{-1}
\]
for every $0<\eps \leq \delta_4$ and $0<\lambda \leq 1 \vee c_1 \alpha^{-1} e^{-2C_4 \alpha}$. 
It follows that there exists a constant $C_8$ such that if $0< \eps \leq \delta_4$ and $0< \lambda \leq 1 \wedge c_1 \alpha^{-1} e^{-4C_4 \alpha} \wedge e^{-\alpha} $ then
\[
\sG_{1,\infty}(-\lambda\eps^2,\alpha\lambda\eps;H,v,p_c+\eps) \leq C_8 \lambda^{-1} \eps^{-1}.
\]
Since $H$, $v$, and $\alpha\geq 0$ were arbitrary, this is easily seen to imply the claim.
\end{proof}

We are now ready to prove \cref{prop:generating_function_estimate}.

\begin{proof}[Proof of \cref{prop:generating_function_estimate}]
Let $G$ be an infinite, connected, locally finite, quasi-transitive graph,  let $p_c=p_c(G)$, and suppose that $\nabla_{p_c} < \infty$. Let $\alpha \geq 0$. It suffices to prove there exist positive constants $c=c(G,\alpha)$, $C=C(G,\alpha)$, and $\delta=\delta(G,\alpha)$ such that
\begin{equation}
\label{eq:STP_F}
\sF_{k,\infty}\left(-c\eps^2,\alpha c\eps;G,p_c+\eps\right) \leq (k-1)! C^k \eps^{-2k+1}
\end{equation}
for every $k\geq 1$ and $0<\eps \leq \delta$. 
Indeed, the claim will then follow by noting that if \eqref{eq:STP_F} holds then
\[
\sM_\infty\bigl(-c\eps^2,\alpha c\eps,\frac{1}{2C}\eps^2;G,p_c(G)+\eps\bigr)
\leq \sum_{k=0}^\infty \frac{\eps^{2k}}{k! 2^kC^k} k! C^{k+1} \eps^{-2(k+1)+1} = 2C \eps^{-1}
\]
for every $0<\eps \leq \delta$.
The case $\alpha =0$ is handled by \cref{lem:t_zero}, so we may suppose that $\alpha>0$.


 All the constants appearing in this proof will depend only on $\alpha$ and $G$.   Throughout the proof we will also use the convention that $(-1)!=1$. 
We begin by noting that, with this convention, there exists a finite constant $C_1$ such that
\begin{multline*}
\frac{1}{(k-1)!}\sum_{\ell=1}^{k-1} \binom{k}{\ell} (\ell-1)!(k-\ell-2)! = \sum_{\ell=1}^{k-2} \frac{k}{\ell (k-\ell)(k-\ell-1)} + \frac{k}{k-1}\\
\leq \frac{4}{k-1} \sum_{\ell=1}^{\lfloor (k-1)/2  \rfloor}  \frac{1}{\ell} + \sum_{\ell=\lceil (k-1)/2\rceil}^{k-2} \frac{2k}{(k-1)(k-\ell)(k-\ell-1)} + \frac{k}{k-1}\leq C_1
\end{multline*}
for every $k \geq 2$. By \cref{lem:base_case}, there exist positive constants $c_1$, $C_2$, and $\delta_1$ such that
\begin{equation}
\sF_{1,\infty}(-\lambda\eps^2,\alpha\lambda\eps;G,p_c+\eps) \leq C_2 \eps^{-1}
\label{eq:base_case_restatement}
\end{equation}
for every $0\leq \lambda \leq c_1$ and $0<\eps \leq \delta_1$.
Define 
\[
c = \min \left\{ 1, c_1, \frac{1-p_c}{4 \alpha C_2 eM}, \frac{1-p_c}{4 \alpha C_1 eM} \right\},
\]
where $M$ is the maximum degree of $G$. 
For each $k,n \geq 1$ and $0<\eps \leq \delta_1$ we define increasing functions $f_{k,n,\eps}:[0,1]\to[0,\infty)$ and $f_{k,\eps}:[0,1]\to [0,\infty]$ by 
\begin{align*}f_{k,n,\eps}(\theta)=\sF_{k,n}(-c\eps^2,\alpha c\eps\theta;G,p_c(G)+\eps)
\qquad \text{ and } \qquad
f_{k,\eps}(\theta) = \sF_{k,\infty}(-c\eps^2,\alpha c\eps\theta;G,p_c(G)+\eps),\end{align*}
so that $f_{k,\eps}(\theta)= \sup_{n\geq 1} f_{k,n,\eps}(\theta)$. 
Thus,  \eqref{eq:base_case_restatement} implies that $f_{1,\eps}(\theta) \leq f_{1,\eps}(1) \leq C_2 \eps^{-1}$ for every $\eps \leq \delta_1$ and $\theta \in [0,1]$.
Meanwhile, \cref{lem:t_zero} easily implies that there exist positive constants $C_3$ and $\delta_2$, such that
\begin{equation}
f_{k,\eps}(0) \leq C_3^{k} \eps^{-2k+1}(k-2)!
\label{eq:thetazero}
\end{equation}
for every $0<\eps \leq \delta_2$ and $k\geq 1$.


Let 
$\delta = \min\{\delta_1,\delta_2,(1-p_c)/2\}$ and let $C_4 = 4(C_2 \vee C_3)> 0$.
 It suffices to prove that
\begin{equation}
\label{eq:generating_induction_hypothesis}
f_{k,\eps}(\theta) \leq C_4^{k} e^{ C_4(k-1)\theta} \eps^{-2k+1} (k-2)!
\end{equation}
for every $k,n \geq 1$, $\theta \in [0,1]$ and $0<\eps \leq \delta$. We will do this by induction on $k$. 
The base case $k=1$ follows immediately from \eqref{eq:base_case_restatement}. Suppose that $k\geq 2$ and that the claim holds for all $1\leq k' <k$. Fix $n\geq 1$ and $0<\eps \leq \delta$. It follows from \cref{cor:integral_recursion} that
\begin{align*}
f_{k,n,\eps}(\theta) \leq f_{k,n,\eps}(0)+\frac{M(p_c+\eps)}{1-(p_c+\eps)} \sum_{\ell=0}^{k-1} \binom{k}{\ell}  \int_{0}^\theta e^{c\eps \varphi} f_{\ell+1,n,\eps}(\varphi)f_{k-\ell,n,\eps}(\varphi) \alpha c \eps\dif \varphi,
\end{align*}
where the $\alpha c \eps$ term comes from changing variables in the integral from $t$ to $\varphi$. Our choice of $c$ therefore yields that
\begin{align}
&\hspace{-0.5em}f_{k,n,\eps}(\theta)
\nonumber
\\
&\leq 
f_{k,\eps}(0) + \frac{eM \alpha c\eps}{1-p_c-\delta} \int_{0}^\theta f_{k,n,\eps}(\varphi) f_{1,\eps}(\varphi) \dif \varphi
+\frac{eM \alpha c \eps}{1-p_c-\delta} \sum_{\ell=1}^{k-1} \binom{k}{\ell}\int_{0}^\theta  f_{\ell+1,\eps}(\varphi)f_{k-\ell,\eps}(\varphi) \dif \varphi
\nonumber
\\
&\leq 
f_{k,\eps}(0)+  \frac{\eps}{4C_2} \int_{0}^\theta f_{k,n,\eps}(\varphi) f_{1,\eps}(\varphi) \dif \varphi + \frac{\eps}{4C_1} \sum_{\ell=1}^{k-1} \binom{k}{\ell}\int_{0}^\theta  f_{\ell+1,\eps}(\varphi)f_{k-\ell,\eps}(\varphi) \dif \varphi
\label{eq:induction1}
\end{align}
for every $\theta \in [0,1]$, where we used that $e^{c\eps \varphi} \leq e^c \leq e$ in the first line. 
The first two terms are easily bounded by using \eqref{eq:base_case_restatement}, \eqref{eq:thetazero}, and the definition of $C_4$ to obtain that
\begin{align}
f_{k,\eps}(0)+  \frac{\eps}{4C_2} \int_{0}^\theta f_{k,n,\eps}(\varphi) f_{1,\eps}(\varphi) \dif \varphi 
&\leq C_3^k\eps^{-2k+1}(k-2)! + \frac{\eps}{2C_2}\int_{0}^\theta f_{k,n,\eps}(\theta) C_2 \eps^{-1} \dif \varphi
\nonumber
\\
&\leq \frac{1}{4} C_4^{k} \eps^{-2k+1} (k-2)! + \frac{1}{2}f_{k,n,\eps}(\theta)
\label{eq:induction2}
\end{align}
for every $\theta \in [0,1]$.
For the final term, we apply the induction hypothesis and the definition of $C_1$ 
to obtain that
\begin{align}
\frac{\eps}{4C_1} \sum_{\ell=1}^{k-1} &\binom{k}{\ell} \int_{0}^\theta  f_{\ell+1,\eps}(\varphi)f_{k-\ell,\eps}(\varphi) \dif \varphi 
\nonumber
\\
&\hspace{-0.25em}\leq
\frac{\eps}{4C_1} \sum_{\ell=1}^{k-1} \binom{k}{\ell}\int_{0}^\theta  C_4^{\ell+1}e^{C_4\ell\varphi}\eps^{-2\ell-1} (\ell-1)!C_4^{k-\ell}e^{C_4(k-\ell-1)\varphi}\eps^{-2k+2\ell+1} (k-\ell-2)! \dif \varphi 
\nonumber
\\
&\hspace{-0.25em}= \frac{C_4^{k+1} \eps^{-2k+1}}{4C_1} \sum_{\ell=1}^{k-1} \binom{k}{\ell} (\ell-1)!(k-\ell-2)! \int_
{0}^\theta e^{C_4(k-1)\varphi} \dif \varphi
\nonumber
\\
&\hspace{-0.25em}\leq
\frac{C_4^k \eps^{-2k+1}}{4C_1} \sum_{\ell=1}^{k-1} \binom{k}{\ell} \frac{(\ell-1)!(k-\ell-2)!}{k-1} e^{C_4(k-1)\theta} \leq \frac{1}{4} C_4^{k} e^{C_4(k-1)\theta}  \eps^{-2k+1}(k-2)! 
\label{eq:induction3}
\end{align}
for every $\theta\in [0,1]$.
Putting together the estimates \eqref{eq:induction1}, \eqref{eq:induction2}, and \eqref{eq:induction3} yields that
\[
f_{k,n,\eps}(\theta) \leq \frac{1}{4}C_4^{k} \eps^{-2k+1}(k-2)! + \frac{1}{2}f_{k,n,\eps}(\theta) + \frac{1}{4}C_4^{k} e^{C_4(k-1)\theta} \eps^{-2k+1} (k-2)!
\]
for every $\theta\in [0,1]$, which rearranges to give that
\[
f_{k,n,\eps}(\theta) \leq C_4^{k} e^{C_4(k-1)\theta} \eps^{-2k+1} (k-2)!
\]
for every $\theta \in [0,1]$ as desired. Since $n\geq 1$ and $0 < \eps \leq \delta$ were arbitrary, taking $n \to\infty $ completes the induction step and hence also the proof.
\end{proof}

\begin{remark}
The correct form of the induction hypothesis \eqref{eq:generating_induction_hypothesis} needed to make this argument work was not at all obvious to us, and was found by extensive trial and error. We would be interested to know if someone is aware of a more systematic way of approaching similar problems.
\end{remark}


\begin{remark} Consider the generating function
\[
\sN_n(s,t,u)= \sN_n(s,t,u;G,p) = \sum_{k=1}^\infty \frac{u^k}{k!}\sF_{k,n}(s,t;G,p),
\]
which satisfies $\partial_u \sN_n=\sM_n$. 
Summing the differential inequality given by \cref{lem:differential_recursion} over $k\geq1 $ implies the partial differential inequality
\begin{equation}
\label{eq:pde}
\partial_t \sN_n \leq \frac{Mpe^t}{1-p}  \sN_n \partial_u \sN_n \qquad \qquad \forall s,t \in \R,\, u \geq 0,\, n \geq 1.
\end{equation}
(Note that while $\sN_n$ need not be differentiable, it is locally Lipschitz and hence differentiable almost everywhere.) See e.g.\ the discussion of exponential generating functions in \cite{MR2172781}. This point of view may be a useful starting point for further analysis. (It appeared to us to be ill-suited to our present aims, however.)
\end{remark}

\section{Completing the proof}
\label{sec:completing}

In this section we complete the proof of \cref{thm:main_radius,thm:main_volume}. It remains to establish lower bounds in the slightly supercritical regime, as well as both upper and lower bounds in the critical and slightly subcritical regimes. Several of these bounds are closely related to estimates that have already been proven in the literature, but still require a delicate treatment to establish in the desired sharp form. 

\medskip

We begin by proving upper bounds in the critical and slightly subcritical regimes under the assumption that $\nabla_{p_c}<\infty$. 

\begin{prop}[Subcritical upper bounds]
\label{prop:subcritical_upper}
Let $G$ be an infinite, connected, locally finite, quasi-transitive graph, and suppose that $\nabla_{p_c}<\infty$. Then there exists positive constants $c$ and $C$ such that
\begin{equation}
\label{eq:radius_subcritical_upper}
\bP_p\left(\operatorname{Rad}_\mathrm{ext}(K_v) \geq r \right) \leq
\bP_p\left(\operatorname{Rad}_\mathrm{int}(K_v) \geq r \right)
\leq \frac{C}{r} \exp\left(-c|p-p_c|r\right)
\end{equation}
and
\begin{equation}
\bP_p\left(|K_v| \geq n \right) 
\leq \frac{C}{n^{1/2}} \exp\left(-c|p-p_c|^2 n\right)
\label{eq:volume_subcritical_upper}
\end{equation}
for every $n,r \geq 1$, $0\leq p \leq p_c$, and $v\in V$.
\end{prop}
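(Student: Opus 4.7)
The plan is as follows. At $p=p_c$, the polynomial bounds $\bP_{p_c}(|K_v|\geq n)\preceq n^{-1/2}$ (Aizenman--Newman) and $\bP_{p_c}(R_v\geq r)\preceq r^{-1}$ (Kozma--Nachmias) hold under the triangle condition, and extend by monotonicity in $p$ to every $0\leq p\leq p_c$. Since the exponential factors $e^{-c(p_c-p)r}$ and $e^{-c(p_c-p)^2 n}$ are bounded below by positive constants inside the respective scaling windows $r(p_c-p)\leq 1$ and $n(p_c-p)^2\leq 1$, the monotone bounds already give the desired estimates there, and it remains to establish exponential decay outside the windows.

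For the volume bound outside its window, I would invoke the moment estimates $\bE_p[|K_v|^k]\leq k!\,C^k(p_c-p)^{-2k+1}$, which are standard consequences of the triangle condition (obtained from $\bE_p|K_v|\leq C/(p_c-p)$ via the BK / tree-graph inequalities). Summing the corresponding power series gives $\bE_p[|K_v|\exp(c(p_c-p)^2|K_v|)]\leq C'/(p_c-p)$ for $c>0$ small enough, and first-moment Markov then yields
\[
\bP_p(|K_v|\geq n)\;\leq\;\frac{1}{n}\,\bE_p\!\left[|K_v|\mathbbm{1}(|K_v|\geq n)\right]\;\leq\;\frac{C'}{n(p_c-p)}\exp\!\bigl(-c(p_c-p)^2 n\bigr).
\]
Outside the scaling window one has $(p_c-p)\sqrt{n}\geq 1$, hence $1/(n(p_c-p))\leq 1/\sqrt{n}$ and the bound is of the desired form.

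For the radius bound outside its window, I would decompose
\[
\bP_p(R_v\geq r)\;\leq\;\bP_p(R_v\geq r,\,E_v\leq\alpha R_v)\;+\;\bP_p(E_v\geq\alpha r)
\]
with $\alpha:=A/(p_c-p)$ for a sufficiently large constant $A$. For the skinny term, \cref{lem:SkinnyRadius} applied with $\lambda:=c_0(p_c-p)$ (permissible since $0\vee(p-p_c)=0$ for $p\leq p_c$) gives the bound $C(r^{-1}+(p_c-p))\exp(-c(p_c-p)r)$; the crucial point is that $\lambda\alpha=c_0 A$ is a constant, so the factor $e^{-C\lambda\alpha}$ in the lemma reduces to a constant. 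The summand $(p_c-p)\exp(-c(p_c-p)r)$ is then absorbed into $r^{-1}\exp(-(c/2)(p_c-p)r)$ via the elementary inequality $y e^{-y}\leq e^{-y/2}$ applied to $y=c(p_c-p)r$. For the fat term, first-moment Markov applied to $E_v\leq M|K_v|$, using the exponential-moment bound from the volume step, gives $\bP_p(E_v\geq\alpha r)\leq (C/(Ar))\exp(-(A/(2CM))(p_c-p)r)$, and taking $A$ large makes this negligible compared to the skinny bound.

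The main obstacle is producing the correct polynomial prefactor: a crude exponential Markov argument delivers prefactor of order $1$ or $(p_c-p)$, which is weaker than the target $n^{-1/2}$ or $r^{-1}$ outside the scaling window. This is resolved by using first-moment Markov (i.e., bounding $\bP_p(X\geq t)\leq \bE_p[X\mathbbm{1}(X\geq t)]/t$) together with the exponential moment estimate on $\bE_p[|K_v|\exp(\mu|K_v|)]$ rather than on $\bE_p[\exp(\mu|K_v|)]$, at the cost of a small constant-factor loss in the exponential rate that is harmless for the statement.
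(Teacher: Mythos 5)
Your proposal is correct and follows essentially the same route as the paper: Markov's inequality applied to the first moment weighted by an exponential, using the tree--graph moment bounds, for the volume; and a skinny/fat decomposition invoking the skinny-clusters lemma (\cref{lem:SkinnyRadius}) with $\lambda=|p-p_c|$ and $\alpha \asymp |p-p_c|^{-1}$ for the radius. The only difference is cosmetic: the paper splits the radius event on $\{|K_v|\gtrless |p-p_c|^{-1}r\}$ and feeds the fat part into the just-established volume bound, whereas you split on $\{E_v \gtrless \alpha r\}$ and re-derive the fat estimate directly from the exponential moment; since the paper's volume bound is itself obtained from that exponential moment via Markov, the two are interchangeable.
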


Recall that we write $\asymp$, $\preceq$, and $\succeq$ for equalities and inequalities that hold to within multiplication by a positive constant depending only on $G$.

\begin{proof}
Fix $v\in V$ and write $R_v=\operatorname{Rad}_\mathrm{int}(K_v)$. 
As discussed in the introduction, it is known that if $G$ is an infinite, connected, locally finite, quasi-transitive graph with $\nabla_{p_c}<\infty$, then 
\begin{equation}
\label{eq:BAKN}
\bP_{p_c}(|K_v|\geq n) \asymp n^{-1/2} \qquad \text{ and } \qquad \bP_{p_c}(R_v\geq r) \asymp r^{-1}
\end{equation}
for every $n,r \geq 1$, and
\begin{equation}
\label{eq:susceptibility}
\bE_{p}\left[|K_v|\right] \asymp (p-p_c)^{-1}
\end{equation}
for every $0 \leq p \leq p_c$. These results essentially follow from the works of Barsky and Aizenman \cite{MR1127713},  Kozma and Nachmias \cite{MR2551766}, and Aizenman and Newman \cite{MR762034}. These papers all dealt with the case $G=\Z^d$, see \cite[Section 7]{Hutchcroftnonunimodularperc} for a discussion of how to generalize these results to arbitrary quasi-transitive graphs with $\nabla_{p_c}<\infty$. It follows from \eqref{eq:susceptibility} and the tree-graph method of Aizenman and Newman \cite{MR762034} (see also \cite[Chapter 6.3]{grimmett2010percolation}) that there exists a constant $C_1$ such that
\[
\bE_{p}\left[|K_v|^k\right] \leq k! C_1^k |p-p_c|^{-2k+1}
\]
for  every $k\geq 1$ and $p<p_c$ and hence that there exists a constant $c_1=1/2C_1$ such that
\[
\bE_{p}\left[|K_v| e^{c_1|p-p_c|^2|K_v|}\right] \leq \sum_{k=0}^\infty \frac{|p-p_c|^{2k}}{2^k C_1^k k!} (k+1)! C_1^{k+1} |p-p_c|^{-2k-1} =  \sum_{k=0}^\infty C_1 k 2^{-k} |p-p_c|^{-1} \preceq |p-p_c|^{-1}
\]
for every $0\leq p<p_c$.  Markov's inequality then implies that
\begin{equation}
\bP_p\left( |K_v| \geq n\right) \preceq \frac{1}{n(p-p_c)} \exp\left[-c_1|p-p_c|^2n\right]
\label{eq:volume_subcritical_outsidewindow}
\end{equation}
for every $0\leq p<p_c$, and  together with \eqref{eq:BAKN} this implies the desired bound \eqref{eq:volume_subcritical_upper}. (Indeed, simply use the bound \eqref{eq:volume_subcritical_outsidewindow} if $n\geq  (p-p_c)^{-2}$ and the bound \eqref{eq:BAKN} otherwise, noting that $\bP_p(|K_v|\geq n)$ is increasing in $p$.) See also \cite{1901.10363} for an alternative derivation of the inequality \eqref{eq:volume_subcritical_outsidewindow} from \eqref{eq:BAKN}.

\medskip

It remains to prove \eqref{eq:radius_subcritical_upper}. The case $r \leq |p-p_c|^{-1}$ is already handled by \cref{lem:inside_window}, so it suffices to consider the case $r \geq |p-p_c|^{-1}$. We have by the union bound that
\[
\bP_p\left(R_v \geq r\right)
\leq 
\bP_p\left(|K_v| \geq |p-p_c|^{-1}r\right) +
\bP_p\left(R_v \geq r \text{ and }
|K_v| \leq |p-p_c|^{-1}r
 \right).
\]
Using \eqref{eq:volume_subcritical_outsidewindow} to bound the first term and \cref{lem:SkinnyRadius} with $\lambda=|p-p_c|$ to bound the second yields that there exist positive constant $c_2$ such that
\[
\bP_p\left(R_v \geq r\right)
\preceq
\frac{1}{r} \exp\left[-c_1|p-p_c|r\right]
+ \left(\frac{1}{r}+|p-p_c|\right) \exp\left[-c_2 |p-p_c|r\right],
\]
which is easily seen to be of the required order (since $x e^{-xr} \leq 2e r^{-1} e^{-xr/2}$ for every $x \in \R$).
\end{proof}

\medskip

We next study the intrinsic radius in the subcritical case. This is our only bound that holds for \emph{all} quasi-transitive graphs.

\begin{prop}
\label{lem:subcritical_radius_lower}
Let $G$ be an  infinite, connected, locally finite, quasi-transitive graph. Then there exist positive  constants $c$, $C$, and $\delta$  such that
\[
\bP_p\left(\operatorname{Rad}_\mathrm{int}(K_v) \geq r \right)
\geq \frac{c}{r} \exp\left(-C|p-p_c|r\right)
\]
for every $p\in (p_c-\delta,p_c]$, $r\geq 1$, and $v\in V$.
\end{prop}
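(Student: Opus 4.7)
The plan is to first establish the bound at $p = p_c$ and then extend to $p < p_c$ by integrating the Russo-type differential inequality already used in the proof of \cref{lem:inside_window}. The key observation is that this differential inequality,
$$\frac{d}{dp}\log \bP_p(R_v \geq r) \leq \frac{r}{p},$$
where $R_v := \operatorname{Rad}_\mathrm{int}(K_v)$, holds for all $p \in (0,1)$ — not just for $p \geq p_c$ — since its derivation only uses the fact that every open pivotal edge for $\{R_v \geq r\}$ must lie on every intrinsic geodesic of length $r$ emanating from $v$, of which there are at most $r$.

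First I would prove $\bP_{p_c}(R_v\geq r) \succeq 1/r$. Integrating the inequality above from $p_c$ up to $q\in(p_c,1]$ yields
$$\bP_{p_c}(R_v\geq r)\geq \bP_q(R_v\geq r)\exp\!\left[-\frac{r(q-p_c)}{p_c}\right].$$
Since $G$ is locally finite, $\{|K_v|=\infty\}\subseteq\{R_v=\infty\}$, so $\bP_q(R_v\geq r)\geq \theta(q)$ for every $r$. The mean-field lower bound $\theta(q)\succeq (q-p_c)$ for $q\in [p_c,1]$, valid on every connected, locally finite, quasi-transitive graph \cite{duminil2015new}, then gives
$$\bP_{p_c}(R_v\geq r)\succeq (q-p_c)\exp\!\left[-\frac{r(q-p_c)}{p_c}\right].$$
Optimizing by choosing $q = p_c + \min\{1/r,\,1-p_c\}$ yields $\bP_{p_c}(R_v\geq r)\succeq 1/r$ for all $r\geq 1$ (the case of very small $r$ is trivial, since $\bP_{p_c}(R_v\geq r)\geq \bP_{p_c}(R_v\geq 1)$ is bounded below by an absolute positive constant because $p_c$ is bounded away from $0$).

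Second, I would integrate the same differential inequality from $p\in(p_c-\delta,p_c]$ up to $p_c$, obtaining
$$\bP_p(R_v\geq r)\geq \bP_{p_c}(R_v\geq r)\exp\!\left[-\frac{r(p_c-p)}{p}\right]\succeq \frac{1}{r}\exp\!\left[-\frac{2r(p_c-p)}{p_c}\right],$$
which is valid for any $\delta\leq p_c/2$ (so that $p\geq p_c/2$). This completes the proof.

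I do not anticipate any substantial obstacle. The whole argument reduces to combining an elementary integration of Russo's inequality (already exploited in \cref{lem:inside_window}) with the off-the-shelf mean-field lower bound $\theta(q)\succeq q-p_c$ of Duminil-Copin and Tassion. The only mild book-keeping is choosing $\delta\leq p_c/2$ and handling bounded $r$ separately in the optimization of $q$, both of which are immediate.
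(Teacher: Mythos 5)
Your argument is essentially the same as the paper's: both rest on integrating the Russo-type inequality $\tfrac{d}{dp}\log\bP_p(R_v\geq r)\leq r/p$ (valid for all $p$), combined with the mean-field lower bound $\theta(q)\succeq q-p_c$ and the trivial inclusion $\{v\to\infty\}\subseteq\{R_v\geq r\}$, followed by an optimization over a reference parameter $q\in[p_c,1]$. The only organizational difference is that you first establish the $p=p_c$ case and then integrate down, whereas the paper integrates from $p$ directly to a single well-chosen $q\geq p_c$; the two are equivalent. One small omission in your write-up: when $p_c=1$ your choice $q=p_c+\min\{1/r,1-p_c\}$ collapses to $q=p_c$ and gives nothing, so that case (e.g.\ $G=\Z$) needs to be handled separately, as the paper does — though it is trivial there since $\bP_1(R_v\geq r)=1$ and the second integration step already suffices.
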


\begin{proof}
A similar argument to that of \cref{lem:inside_window} establishes that 
\[
\bP_p\left(\operatorname{Rad}_\mathrm{int}(K_v) \geq r \right) \geq \bP_q\left(\operatorname{Rad}_\mathrm{int}(K_v) \geq r \right) \exp\left[-\frac{q-p}{p} r\right]
\]
for every $0\leq p \leq q \leq 1$ and $r \geq 1$. It follows in particular that
\[
\bP_p\left(\operatorname{Rad}_\mathrm{int}(K_v) \geq r \right) \geq  \exp\left[-\frac{(1-p)}{p} r\right]
\]
for every $0\leq p \leq 1$ and $r\geq 1$, which implies the claim in the case $p_c=1$.
On the other hand, if $p_c<1$ then we have that there exists a constant $c$ such that
$\bP_p(v \to \infty) \geq c(p-p_c)$
for every $p_c \leq p \leq 1$ \cite{aizenman1987sharpness,duminil2015new,1901.10363}, so that
\[
\bP_p\left(\operatorname{Rad}_\mathrm{int}(K_v) \geq r \right) \geq c(q-p_c)\exp\left[-\frac{q-p}{p} r\right]
\]
for every $0\leq p \leq p_c \leq q \leq 1$. Taking $q -p_c = (p_c-p) \wedge r^{-1}$ implies the claim.
\end{proof}

We next prove sharp lower bounds on the tail of the volume under the assumption that $\nabla_{p_c}<\infty$.

\begin{prop}
\label{lem:volume_lower}
Let $G$ be an  infinite, connected, locally finite, quasi-transitive graph, and suppose that $\nabla_{p_c}<\infty$. Then there exist positive  constants $c$, $C$, and $\delta$  such that
\begin{equation}
\label{eq:volume_lower}
\bP_p\left(n \leq |K_v| < \infty \right)
\geq c n^{-1/2} \exp\left(-C|p-p_c|^2 n\right)
\end{equation}
for every $p \in (p_c-\delta,p_c+\delta)$, $r\geq 1$, and $v\in V$.
\end{prop}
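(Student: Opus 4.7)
The plan is to split into two regimes according to whether $n$ lies inside or outside the scaling window of width $\eps^{-2}$, where $\eps = |p-p_c|$.

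Inside the window ($n \leq C_0 \eps^{-2}$), the target reduces to the critical-type bound $\bP_p(n\leq |K_v|<\infty) \succeq n^{-1/2}$, matching the critical lower bound in \eqref{eq:BAKN}. For $p \geq p_c$ this follows from monotonicity applied to $\bP_{p_c}(|K_v|\geq n)\succeq n^{-1/2}$ together with the mean-field bound $\theta(p) \leq C\eps$ (a consequence of $\nabla_{p_c}<\infty$ via Aizenman--Barsky):
\[
\bP_p(n\leq |K_v|<\infty) \geq \bP_{p_c}(|K_v|\geq n)-\bP_p(|K_v|=\infty)\geq cn^{-1/2}-C\eps,
\]
which is $\succeq n^{-1/2}$ for $C_0$ chosen large enough that $\eps \leq c n^{-1/2}/(2C)$. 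For $p < p_c$ we use Russo's formula to bound the logarithmic derivative of $\bP_p(|K_v|\geq n)$ by a multiple of $\sqrt n / p$: the numerator is the expected number of open pivotal edges for $\{|K_v|\geq n\}$, and these are bridges whose $v$-side has size $<n$. A typical critical cluster of size $\geq n$ has only $O(\sqrt n)$ such bridges, which can be established via a geometric analysis of the tree of $2$-connected components combined with the skinny cluster bound \cref{lem:SkinnyRadius} and the critical estimates \eqref{eq:BAKN} and \eqref{eq:int_critical}. Integrating from $p$ to $p_c$ yields $\log[\bP_{p_c}(|K_v|\geq n)/\bP_p(|K_v|\geq n)]\preceq \sqrt n\,\eps \preceq \sqrt{C_0}$ inside the window, so $\bP_p(|K_v|\geq n) \succeq n^{-1/2}$.

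Outside the window ($n \geq C_0 \eps^{-2}$), the target is the exponential bound $\bP_p(n\leq |K_v|<\infty) \succeq n^{-1/2} \exp(-C\eps^2 n)$. We use a change-of-measure argument with subcritical reference point $p_0 = p_c - c_1 n^{-1/2}$, chosen so that $n$ sits at the boundary of the scaling window for $p_0$. By the inside-window case applied to $p_0$, combined with the matching subcritical upper bound \cref{prop:subcritical_upper}, choosing $c_1$ small gives $\bP_{p_0}(n\leq |K_v|\leq \tilde M n) \succeq n^{-1/2}$ for a constant $\tilde M$. We then transfer to $\bP_p$ via the explicit Radon--Nikodym formula
\[
\frac{\bP_p(K_v=A)}{\bP_{p_0}(K_v=A)}=\left[\prod_{e\in E_G(A)}\left(\frac{p}{p_0}\right)^{\omega(e)}\left(\frac{1-p}{1-p_0}\right)^{1-\omega(e)}\right]\left(\frac{1-p}{1-p_0}\right)^{|\partial_E A|},
\]
where for $|K_v|\leq \tilde M n$ the relevant edge counts sum to $\Theta(n)$. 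A Taylor expansion around $p=p_0$ shows that the log-ratio, on typical clusters of size in $[n, \tilde M n]$, is bounded below by $-C(p-p_0)^2 n \geq -C'\eps^2 n$ (using $|p-p_0|\leq 2\eps$), and a concentration estimate converts this into the pointwise lower bound $\bP_p(n\leq |K_v|\leq \tilde M n)\succeq n^{-1/2}\exp(-C'\eps^2 n)$.

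The main obstacle will be in the outside-window case: rigorously controlling the log-Radon--Nikodym derivative across the cluster distribution under $\bP_{p_0}$. The linear term $(p-p_0)\sum_{e}[\omega(e)/p_0 - (1-\omega(e))/(1-p_0)]$ does not automatically vanish, as it depends sensitively on the ratio of open, closed, and boundary touching edges of $K_v$; handling it requires either a cluster-adapted choice of $p_0$ or a concentration argument for this ratio, established via a martingale decomposition along an exploration of $K_v$ or a second-moment computation using the sharp moment estimates of \cref{subsec:negative_term,subsec:positive_term}, all relying on the local decorrelation provided by $\nabla_{p_c}<\infty$. A further subtlety for $p>p_c$ is ensuring the cluster remains finite after the transfer; this is handled automatically by restricting throughout to the event $\{|K_v|\leq \tilde Mn\}\subset \{|K_v|<\infty\}$, absorbing only a constant factor.
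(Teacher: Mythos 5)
Your proposal takes a genuinely different route from the paper and leaves two nontrivial steps unresolved. For the inside-window case at $p<p_c$ you rest on the claim that $\bE_p\bigl[\#\text{open pivotals for }\{|K_v|\geq n\}\mid |K_v|\geq n\bigr]\preceq\sqrt n$: the order is plausibly right, but unlike the radius event used in \cref{lem:inside_window} --- whose pivotals all lie on a single geodesic and hence number at most $r$ --- the pivotals for $\{|K_v|\geq n\}$ are bridges whose $v$-side component has size $<n$, and there is no soft geometric reason that there are only $O(\sqrt n)$ of them; establishing it is at least as much work as the Paley--Zygmund second-moment argument on the intrinsic shell volume $\sum_{\ell=r}^{2r}\#\partial B_\mathrm{int}(v,\ell)$ that the paper actually uses. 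The more serious gap is in your outside-window change-of-measure step, and you identify it yourself: the first-order term $(p-p_0)\bigl[O_A/p_0-(C_A+B_A)/(1-p_0)\bigr]$ of the log Radon--Nikodym derivative (with $O_A,C_A,B_A$ the open internal, closed internal, and boundary edge counts of $A$) does not vanish pointwise and depends sensitively on the cycle structure of $K_v$. You would need concentration of this quantity at scale $\eps n$ over the conditional cluster law at $p_0$ --- a substantive piece of new analysis that your sketch does not carry out (the conditional law of $O_A$ given $K_v=A$ is that of the open-edge count of a random connected spanning subgraph of $A$, and controlling its fluctuations uniformly over the random $A$ is delicate). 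Note also that the paper's \cref{rem:nguyen} explicitly warns that the closely related moment-comparison arguments of Durrett--Nguyen and Newman lose logarithmic factors, so it is genuinely unclear that an RN-derivative approach can recover the sharp $\exp(-\Theta(\eps^2 n))$.

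The paper avoids both difficulties cleanly. For $p>p_c$ it couples $\omega_{p_c}\subset\omega_p$ monotonically, conditions on $n\leq|K_v^{p_c}|\leq C_9n$ (probability $\succeq n^{-1/2}$ by the critical estimate), and lower-bounds the probability that no boundary edge of $K_v^{p_c}$ opens and connects to infinity off the cluster by $\bigl(1-O((p-p_c)^2)\bigr)^{O(n)}\geq e^{-C(p-p_c)^2n}$ via FKG; here the $\eps^2$ arises directly as $(p-p_c)\times\theta^*(p)$ rather than as a second-order Taylor coefficient, so no cancellation needs to be engineered, and the restriction to finite clusters is built into the coupling event. For $p<p_c$ outside the window the paper compares the intrinsic-radius lower bound of \cref{lem:subcritical_radius_lower} against the skinny-cluster bound of \cref{lem:SkinnyRadius}: a cluster with radius $\geq\alpha\eps n$ but volume $\leq n$ is skinny, and for $\alpha$ chosen large the skinny correction decays strictly faster than the radius lower bound, leaving $\bP_p(|K_v|\geq n)\succeq n^{-1/2}e^{-C\eps^2 n}$.
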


\begin{remark}
\label{rem:nguyen}
Nguyen \cite{MR923855} proved that, under the same conditions as \cref{lem:volume_lower}, there exist constants $(C_k)_{k\geq 1}$ such that
\[
\bE_p\left[|K_v|^k\right] \geq C_k |p-p_c|^{-2k+1}
\]
for every $0 \leq p <p_c$ and $k\geq 1$. This is sufficient to determine the value of the gap exponent $\Delta=2$. However, it seems that the argument of \cite{MR923855} does not give sharp ($C_k \geq k!e^{- O(k)}$) control of the value of the constant $C_k$, and therefore does  not establish the subcritical case of the bound \eqref{eq:volume_lower}. Similarly, classical arguments of Durrett and Nguyen \cite{durrett1985thermodynamic} and Newman \cite{MR869320} can be used to prove related inequalities for the truncated $k$th moment $\bE_{p,\infty}\left[|K_v|^k\right]$ in the slightly supercritical regime. Again, however, it appears that these estimates are not sharp, and lose various logarithmic factors compared to our  estimate \eqref{eq:volume_lower}. 
\end{remark}

\begin{proof}[Proof of \cref{lem:volume_lower}]
Write $R_v=\Rad_\mathrm{int}(K_v)$. 
First suppose that $p \leq p_c$. Taking $\lambda = \alpha |p-p_c|$ in \cref{lem:SkinnyRadius}, we obtain that that there exist positive constants $c_1$ and $C_1$ such that
\[
\bP_p\left(|K_v| \leq n, R_v \geq \alpha |p-p_c| n\right) \leq C_1 \left(\frac{1}{\alpha |p-p_c| n} + \alpha |p-p_c| \right) \exp\left[-c_1  \alpha^2 |p-p_c|^2 n \right]
\]
for every $0 \leq p \leq p_c$, $n\geq 1$, and $\alpha \geq 1$. Letting  $c_2$, $C_2$, and $\delta_1$ be the constants from \cref{lem:subcritical_radius_lower}, it follows that
\begin{align*}
\bP_p\left( |K_v| \geq n\right) 
&\geq 
\bP_p\left(R_v \geq \alpha |p-p_c| n  \right) 
- \bP_p\left(|K_v| \leq n, R_v \geq \alpha |p-p_c| n\right) 
\\
&\geq
\frac{c_2}{\alpha |p-p_c| n}\exp\left[-C_2\alpha |p-p_c|^2 n\right]
\\&\hspace{4.9cm}- C_1 \left(\frac{1}{\alpha |p-p_c| n} + \alpha |p-p_c| \right) \exp\left[-c_1  \alpha^2 |p-p_c|^2 n \right]
\end{align*}
for every $p_c-\delta_1 \leq p \leq p_c$, $r\geq 1$, and $\alpha \geq 1$. Taking $\alpha = 1\vee(2C_1/c_1)$ we deduce that there exist positive constants $c_3$, $C_3$, and $C_4$ such that
\begin{align*}
\bP_p\left(|K_v| \geq n \right) 
\geq 
\frac{c_3}{ |p-p_c| n}\exp\left[-C_3 |p-p_c|^2 n\right]
- C_4 \left(\frac{1}{|p-p_c| n} + |p-p_c| \right) \exp\left[-2C_3 |p-p_c|^2 n \right]
\end{align*}
for every $p \in (p_c-\delta,p_c)$ and $n \geq 1$. It follows readily that there exist positive constants $c_4$ and $C_5$ such that
\begin{align*}
\bP_p\left(|K_v| \geq n \right) 
\geq 
\frac{c_4}{ \sqrt{n}}\exp\left[-C_3 |p-p_c|^2 n\right]
\end{align*}
for every $p \in (p_c-\delta,p_c)$ and $n \geq C_5|p-p_c|^{-2}$. Since $\bP_p(|K_v|\geq n)$ is decreasing in $n$, it follows that
\begin{align}
\label{eq:subcritical_volume_lower_outside_window}
\bP_p\left(|K_v| \geq n \right) 
\geq 
\frac{c_4}{ \sqrt{n \vee C_5|p-p_c|^{-1}}}\exp\left[-C_3 |p-p_c|^2 (n \vee C_5|p-p_c|^{-1})\right]
\end{align}
for every $p \in (p_c-\delta,p_c)$ and $n\geq 1$.

\medskip

We now handle the case that $p \leq p_c$ and $n$ is of order at most $|p-p_c|^{-2}$. It follows from the proof
  of \cite[Proposition 3.6]{hutchcroft20192} that
\[
\sup_{u\in V}\bE_{p_c}\left[\sum_{\ell =r}^{2r}\# \partial B_\mathrm{int}(u,\ell) \right] \geq r+1 
\]
for every $r\geq 1$, and an argument similar to that performed in the proof of \cref{lem:inside_window} shows that there exist constants $\delta_4 \leq p_c/2$ and $C_6$ such that
\begin{equation}
\label{eq:expected_volume_window}
\sup_{u\in V}\bE_{p}\left[\sum_{\ell =r}^{2r}\# \partial B_\mathrm{int}(u,\ell) \right] \geq (r+1)\exp\left[-\frac{2(p-p_c)}{p} r\right] \geq (r+1)\exp\left[-C_6 |p-p_c| r\right]
\end{equation}
for every $p\in (p_c-\delta_4,p_c]$ and $r\geq 1$. Applying \eqref{eq:BAKN}, it follows that
\begin{equation}
\sup_{u\in V}\bE_{p}\left[\sum_{\ell =r}^{2r}\# \partial B_\mathrm{int}(u,\ell) \mid R_u \geq r \right] \geq c_5 (r+1)^2 \exp\left[-C_6 |p-p_c| r\right]
\label{eq:expected_volume_window2}
\end{equation}
for every $p\in (p_c-\delta_4,p_c]$ and $r\geq 1$. On the other hand, 
since $\nabla_{p_c}<\infty$, it is known \cite{MR2551766,sapozhnikov2010upper} that there exists a constant $C_7$ such that
\[
\bE_{p}\left[\#B_\mathrm{int}(u,r)\right] \leq C_7 (r+1)
\]
for every $u\in V$, $0 \leq p \leq p_c$ and $r\geq 0$. A straightforward and well-known variation on the tree-graph inequality method of Aizenman and Newman \cite{MR762034} (see e.g.\ \cite[Lemma 2]{kozma2011percolation}) gives that
\[
\bE_{p}\left[(\#B_\mathrm{int}(u,2r))^2\right] \leq \sup_{w\in V} \bE_{p}\left[(\#B_\mathrm{int}(w,2r))\right]^3
\]
for every $u\in V$ and $r\geq 1$, and hence that 
\[
\bE_{p}\left[(\#B_\mathrm{int}(u,2r))^2\right] \leq  C_7^3(r+1)^3
\]
for every $r\geq 1$, $u \in V$ and $0\leq p \leq p_c$. 
It follows from the Paley-Zygmund inequality that
\begin{equation}
\label{eq:PaleyZygmund}
\bP_p\left(|K_u| \geq \frac{1}{2}\bE_{p}\left[\sum_{\ell =r}^{2r}\# \partial B_\mathrm{int}(u,\ell) \mid R_u \geq r\right] \right) \geq \myfrac[0.5em]{\bE_{p}\left[\sum_{\ell =r}^{2r}\# \partial B_\mathrm{int}(u,\ell) \right]^2}{4\bE_{p}\left[(\#B_\mathrm{int}(u,2r))^2\right]}
\end{equation}
for every $u\in V$, $0<p \leq p_c$, and $r \geq 1$. Applying this inequality together with \eqref{eq:expected_volume_window}, \eqref{eq:expected_volume_window2}, and \eqref{eq:PaleyZygmund} and maximizing over $u$, it follows that
\[
\sup_{u\in V}\bP_p\left(|K_v| \geq \frac{c_5}{2}(r+1)^2e^{-C_6|p-p_c|r}\right) \geq \frac{e^{-2C_6|p-p_c|r}}{4 C_7^3 (r+1)}
\]
for every $p \in (p_c-\delta_4,p_c]$ and $r\geq 1$. Since $G$ is connected and quasi-transitive, it follows 
straightforwardly that there exist constants $c_6$, $c_7$, and $c_8$ such that
\[
\bP_p\left(|K_v| \geq n \right) \geq c_6
\sup_{u\in V} \bP_p\left(|K_u| \geq n \right) \geq \frac{c_7}{\sqrt{n}}
\]
for every $p\in (p_c-\delta_4,p_c]$ and $1 \leq n \leq c_8|p-p_c|^{-2}$. The claimed bound \eqref{eq:volume_lower} follows in the case $p \in (p_c-\delta_4,p_c]$ from this together with \eqref{eq:subcritical_volume_lower_outside_window}.

\medskip

We now consider the case $p \geq p_c$. Let $\omega_p$ and $\omega_{p_c}$ be Bernoulli-$p$ and Bernoulli-$p_c$ percolation on $G$ coupled in the standard monotone way, so that, conditional on $\omega_{p_c}$, every $\omega_{p_c}$-open edge is $\omega_p$ open and every $\omega_{p_c}$-closed edge is chosen to be either $\omega_p$-open or $\omega_p$-closed independently at random with probability $(p-p_c)/(1-p_c)=O(p-p_c)$ to be $\omega_p$-open. Let $K_v^{p_c}$ and $K_v^p$ denote the clusters of $v$ in $\omega_{p_c}$ and $\omega_p$ respectively. 
By \eqref{eq:BAKN}, there exist constants $c_9$ and $C_8$ such that
\[
\P(n \leq |K_v^{p_c}| \leq \alpha n) \geq \frac{c_9}{\sqrt{n}} - \frac{C_8}{\sqrt{\alpha n}}
\]
for every $n \geq 1$ and $\alpha \geq 1$. Taking $\alpha =C_9:= 1 \vee (2C_8/c_9)^2$, it follows that there exists a positive constant $c_{10}$ such that
\begin{equation}
\P(n \leq |K_v^{p_c}| \leq C_9 n) \geq \frac{c_{10}}{\sqrt{n}} 
\label{eq:critical_volume_one_scale}
\end{equation}
for every $n\geq 1$. Let $\sA_n$ be the event that $n \leq |K_v^{p_c}| \leq C_9n$ and let $\sB_n$ be the event that $n \leq |K_v^p|<\infty$. If $\sA_n$ occurs but $\sB_n$ does not, then there must exist an $\omega_{p_c}$-closed edge in the boundary of $K_v^{p_c}$ that is $\omega_p$-open and whose other endpoint is connected to infinity in $\omega_p$ by an open path that does not visit any vertex of $K_v^{p_c}$. Conditional on $K_v^{p_c}$, the probability that any particular edge in the boundary of $K_v^{p_c}$ has this property is bounded by $(p-p_c)\theta^*(p)/(1-p_c)=O((p-p_c)^2)$, and it follows by the FKG inequality that there exists a constant $C_{10}$ such that
\begin{equation}
\P(\sB_n \mid K_v^{p_c}) \geq \mathbbm{1}(\sA_n) \left[ 1-1\wedge \frac{(p-p_c)\theta^*(p)}{1-p_c}\right]^{M |K_v^{p_c}|} \geq \mathbbm{1}(n \leq |K_v^{p_c}|\leq C_9n)e^{-C_{10} (p-p_c)^2 n},
\label{eq:moving_p_volume}
\end{equation}
where $M$ is the maximum degree of $G$. The claimed bound follows from \eqref{eq:critical_volume_one_scale} and \eqref{eq:moving_p_volume} by taking expectations over $K_v^{p_c}$.
\end{proof}



Finally, we prove a lower bound on the tail of the radius of a finite cluster in the supercritical regime under the assumption that $p_c<p_{2\to 2}$.

\begin{prop}
\label{prop:radius_lower_bounds}
Let $G$ be an infinite, connected, locally finite, quasi-transitive graph, and suppose that $p_c<p_{2\to 2}$. Then there exist positive constants $c$ and $C$ such that
\begin{equation}
\label{eq:radius_lower}
\bP_p\left(\operatorname{Rad}_\mathrm{int}(K_v) \geq r \right)
\geq \bP_p\left(\operatorname{Rad}_\mathrm{ext}(K_v) \geq r \right) \geq \frac{c}{r} \exp\left(-C|p-p_c|r\right)
\end{equation}
for every $r \geq 1$ and $p\in(p_c-\delta,p_c+\delta)$.
\end{prop}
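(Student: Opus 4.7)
My plan is to split into the supercritical case $p \in [p_c, p_c+\delta)$ and the subcritical case $p \in (p_c-\delta, p_c]$. The first inequality of \eqref{eq:radius_lower} is trivial since $\Rad_\mathrm{ext}(K_v) \leq \Rad_\mathrm{int}(K_v)$ always. The supercritical case for the second inequality will be immediate from monotonicity: the event $\{\Rad_\mathrm{ext}(K_v) \geq r\}$ is increasing in $\omega$, so its probability is non-decreasing in $p$; combined with the critical lower bound $\bP_{p_c}(\Rad_\mathrm{ext}(K_v) \geq r) \succeq 1/r$ from \eqref{eq:ext_critical}, this gives $\bP_p(\Rad_\mathrm{ext}(K_v) \geq r) \succeq 1/r$ for all $p \in [p_c, p_c+\delta)$, which trivially exceeds $(c/r)e^{-C(p-p_c)r}$ for appropriate constants.

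For the subcritical case, I plan to use a sprinkling argument. Let $\omega_{p_c}$ and $\omega_p \subseteq \omega_{p_c}$ be the standard monotone coupling, so that conditionally on $\omega_{p_c}$ each $\omega_{p_c}$-open edge is independently $\omega_p$-open with probability $p/p_c$. On the event $\{\Rad_\mathrm{ext}(K_v(\omega_{p_c})) \geq r\}$ I will choose a canonical self-avoiding $\omega_{p_c}$-open path $\pi$ of minimal length from $v$ to some vertex at graph-distance at least $r$, and write $|\pi|$ for its length. Since $|\pi|$ equals the intrinsic distance in $K_v(\omega_{p_c})$ from $v$ to the nearest such vertex, one has $|\pi| \leq \Rad_\mathrm{int}(K_v(\omega_{p_c}))$. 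Conditionally on $\omega_{p_c}$, the probability that every edge of $\pi$ survives in $\omega_p$ is $(p/p_c)^{|\pi|}$, and this forces $\Rad_\mathrm{ext}(K_v(\omega_p)) \geq r$. Hence, for any constant $C > 0$ to be chosen below,
\begin{equation*}
\bP_p(\Rad_\mathrm{ext}(K_v) \geq r) \geq (p/p_c)^{Cr} \, \bP_{p_c}\bigl(\Rad_\mathrm{ext}(K_v) \geq r, \; |\pi| \leq Cr\bigr).
\end{equation*}

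The key remaining step will be to show that the probability on the right is of order $1/r$ for $C$ sufficiently large. The event $\{\Rad_\mathrm{ext}(K_v) \geq r, \; |\pi| > Cr\}$ is contained in $\{\Rad_\mathrm{int}(K_v) > Cr\}$, which by the critical upper bound \eqref{eq:int_critical} has probability at most a constant times $1/(Cr)$. Combining this with $\bP_{p_c}(\Rad_\mathrm{ext}(K_v) \geq r) \succeq 1/r$ from \eqref{eq:ext_critical} and choosing $C$ large enough relative to the ratio of the implicit constants in \eqref{eq:ext_critical} and \eqref{eq:int_critical} yields $\bP_{p_c}(\Rad_\mathrm{ext}(K_v) \geq r, \; |\pi| \leq Cr) \succeq 1/r$. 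I will then conclude by observing that $(p/p_c)^{Cr} \geq \exp[-C'(p_c-p)r]$ for $p$ in a small neighbourhood of $p_c$, using $\log(1-x) \geq -2x$ for $x \leq 1/2$.

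The main obstacle is the subcritical case: a priori the path $\pi$ could have intrinsic length much larger than $r$, which would render $(p/p_c)^{|\pi|}$ too small for the argument to succeed. The crucial input that saves the day is that under the $L^2$-boundedness condition the critical intrinsic radius has a tail of the same order $1/r$ as the critical extrinsic radius, rather than being substantially heavier, so configurations with $|\pi| > Cr$ contribute only a controllable fraction of $\bP_{p_c}(\Rad_\mathrm{ext}(K_v) \geq r)$ once $C$ is chosen appropriately.
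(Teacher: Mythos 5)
Your subcritical argument is essentially correct and takes a genuinely different route from the paper. The paper first establishes a lower bound on $\bP_p(\Rad_\mathrm{int}(K_v)\geq r)$ for $p\leq p_c$ directly via \cref{lem:subcritical_radius_lower} (a Russo-type argument counting pivotal edges), and then converts intrinsic to extrinsic radius using the ballisticity estimate \eqref{eq:ballisticity} from \cite[Proposition 3.2]{hutchcroft20192}. You instead start at $p_c$, invoke the two-sided critical bound \eqref{eq:ext_critical}, and sprinkle down to $p<p_c$ along a canonical path $\pi$ whose length is controlled via $|\pi|\leq \Rad_\mathrm{int}(K_v)$ and \eqref{eq:int_critical}. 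Both ultimately rely on the same $L^2$-boundedness input (your \eqref{eq:ext_critical} is itself proved via that ballisticity), but the structure of the deduction differs; yours is slightly more self-contained modulo the cited critical estimates.

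The supercritical case is where the real gap lies, and it is not just pedantry. You handle $p\in[p_c,p_c+\delta)$ by noting that $\{\Rad_\mathrm{ext}(K_v)\geq r\}$ is increasing and invoking monotonicity plus the critical lower bound. This does prove the displayed inequality \emph{as literally typeset}, but that version is vacuous for $p>p_c$ (the left side is bounded below by $\theta(p)>0$). The entire content of the proposition in the supercritical regime, and what is actually needed to prove the lower bound in \cref{thm:main_radius}, is the bound with the finiteness restriction, $\bP_p\bigl(r\leq\Rad_\mathrm{ext}(K_v)<\infty\bigr)\geq (c/r)\exp(-C(p-p_c)r)$; the displayed statement evidently dropped the ``$<\infty$'' by accident, and the paper's own proof concludes with precisely the $<\infty$ version. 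The event $\{r\leq\Rad_\mathrm{ext}(K_v)<\infty\}$ is not monotone, so your argument gives nothing here. The paper instead first proves a volume-constrained version of the extrinsic radius lower bound at $q=p_c-\eps$ (using \eqref{eq:vol_supercritical_upper} and the ballisticity bound), then couples $q$-percolation to $p=p_c+\eps$-percolation, and uses FKG to show that on the good volume-bounded event the sprinkled cluster stays finite with probability at least $e^{-\Theta(\eps r)}$. Something of this flavour---a sprinkling argument that carries a volume control so that the sprinkled cluster does not accidentally become infinite---is unavoidable, and you would need to supply it.
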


\begin{proof}[Proof of \cref{prop:radius_lower_bounds}]
By \cref{lem:subcritical_radius_lower}
there exist positive constants $c_1$, $C_1$, and $\delta_1$ such that
\begin{equation}
\label{eq:radius_lower}
\bP_p\left(\operatorname{Rad}_\mathrm{int}(K_v) \geq r \right) \geq \frac{c_1}{r} \exp\left(-C_1|p-p_c|r\right)
\end{equation}
for every $r \geq 1$ and $p\in(p_c-\delta_1,p_c)$. On the other hand, it follows from \cite[Proposition 3.2]{hutchcroft20192} that 
\[
\bP_{p}\left(\operatorname{Rad}_\mathrm{int}(K_v) \geq r \text{ and } \operatorname{Rad}_\mathrm{ext}(K_v) \leq \ell \right) \leq 3\|T_p\|_{2\to 2}\exp\left[-\frac{r}{e\|T_p\|_{2\to 2}} \right] |B(v,\ell)|^{1/2}
\]
for every $0 \leq p < p_{2\to 2}$ and $r,\ell \geq 1$. Since $p_c<p_{2\to 2}$, it follows that there exist constants $c_2$, $c_3$, $C_2$ and $\delta_2$ such that
\begin{equation}
\label{eq:ballisticity}
\bP_{p}\left(\operatorname{Rad}_\mathrm{int}(K_v) \geq r \text{ and } \operatorname{Rad}_\mathrm{ext}(K_v) \leq c_2 r \right) \leq C_2 e^{-c_3 r}
\end{equation}
for every $0\leq p \leq p_c+\delta_2$ and $r \geq 1$. It  follows in particular that there exist positive constants $c_4$ and $r_0$ such that
\begin{align}
\bP_{p}\left(\operatorname{Rad}_\mathrm{ext}(K_v) \geq c_2 r \right) 
&\geq  \bP_{p}\left(\operatorname{Rad}_\mathrm{int}(K_v) \geq r \right)- \bP_{p}\left(\operatorname{Rad}_\mathrm{int}(K_v) \geq r \text{ and } \operatorname{Rad}_\mathrm{ext}(K_v) \leq c_2 r \right)
\nonumber
\\
&\geq \frac{c_1}{r} \exp\left(-C_1|p-p_c|r\right) - C_2e^{-c_3 r} 
\geq \frac{c_4}{r} \exp\left(-C_1|p-p_c|r\right)
\label{eq:radius_subcritical_lower5}
\end{align}
for every $p\in (p_c-\delta_2,p_c]$ and $r  \geq r_0$. This is easily seen to imply \eqref{eq:radius_lower} in the case $p\in (p_c-\delta_3,p_c]$.

We now treat the supercritical case.
Combining the inequality \eqref{eq:radius_subcritical_lower5} with \eqref{eq:vol_supercritical_upper}, an easy argument similar to that of the previous paragraph shows that there exist positive constants $c_5$, $C_3$, and $C_4$ such that
\[
\bP_{p}\left(\operatorname{Rad}_\mathrm{ext}(K_v) \geq r \text{ and } |K_v| \leq C_3|p-p_c|^{-1}r \right) \geq \frac{c_5}{r}\exp\left(-C_4|p-p_c|r\right)
\]
for every $p\in (p_c-\delta_2,p_c)$. We apply a similar coupling argument to the end of the proof of \cref{lem:volume_lower}, with the important difference that we compare $(p_c+\eps)$-percolation to $(p_c-\eps)$-percolation rather than to $p_c$-percolation.  Let $0<\eps \leq \delta_2$, let $p=p_c+\eps$, and let $q=p_c-\eps$. Let $\omega_p$ and $\omega_q$ be Bernoulli-$p$ and Bernoulli-$q$ percolation on $G$ coupled in the standard monotone way, so that, conditional on $\omega_q$, every $\omega_q$-open edge is $\omega_p$ open and every $\omega_q$-closed edge is chosen to be either $\omega_p$-open or $\omega_p$-closed independently at random with probability $(p-q)/(1-q)=O(\eps)$ to be $\omega_p$-open. Let $K_v^q$ and $K_v^p$ denote the clusters of $v$ in $\omega_q$ and $\omega_p$ respectively. Let $\sA_r$ be the event that $K_v^q$ has extrinsic radius at least $r$ and volume at most $C_3 \eps^{-1}r$, and let $\sB_r$ be the event that $K_v^p$ is finite and has extrinsic radius at least $r$. If $\sA_r$ occurs but $\sB_r$ does not, then there must exist an $\omega_q$-closed edge in the boundary of $K_v^q$ that is $\omega_p$-open and whose other endpoint is connected to infinity in $\omega_p$ by an open path that does not visit any vertex of $K_v^q$. Conditional on $K_v^q$, the probability that any particular edge in the boundary of $K_v^q$ has this property is bounded by $(p-q)\theta^*(p)/(1-q)=O(\eps^2)$, and it follows by the FKG inequality that there exists a constant $C_5$ such that
\[
\P(\sB_r \mid K_v^q) \geq \mathbbm{1}(\sA_r) \left[1- 1\wedge \frac{(p-q)\theta^*(p)}{1-q}\right]^{M |K_v^q|} \geq \mathbbm{1}(\sA_r)e^{-C_5 \eps r},
\]
where $M$ is the maximum degree of $G$ and where we used that $|K_v^q| \leq C_3 \eps^{-1} r$ on the event $\sA_r$ in the second inequality. Taking expectations, it follows that
\[
\P(\sB_r) \geq \P(\sA_r)e^{-C_5 \eps r} \geq \frac{c_5}{r}e^{-(C_4+C_5)\eps r}
\]
and hence that there exists a constant $C_6$ such that
\begin{equation}
\bP_p\left(r \leq \operatorname{Rad}_\mathrm{int}(K_v) <\infty \right)
\geq \bP_p\left(r \leq \operatorname{Rad}_\mathrm{ext}(K_v) <\infty \right) \geq \frac{c_5}{r} \exp\left(-C_6|p-p_c|r\right)
\end{equation}
for every $p\in (p_c,p_c+\delta_2)$ and $r\geq 1$. This completes the proof. (Note that this argument cannot be applied directly to the intrinsic radius as written due to non-monotonicity issues.) \qedhere

\end{proof}

We now have all the ingredients required to conclude the proofs of our main theorems.

\begin{proof}[Proof of \cref{thm:main_volume}]
The upper bound follows from \cref{prop:vol_supercritical_upper,prop:subcritical_upper}, while the lower bound follows from \cref{lem:volume_lower}.
\end{proof}

\begin{proof}[Proof of \cref{thm:main_radius}]
The upper bound follows from \cref{prop:int_rad_supercritical_upper,prop:subcritical_upper}, while the lower bound follows from \cref{prop:radius_lower_bounds}.
\end{proof}

\section{Perspectives on the Euclidean case}
\label{subsec:Euclidean}

In this subsection we discuss the (apparently rather substantial) challenges that remain to extend our analysis from nonamenable graphs to the high-dimensional Euclidean setting, and give some perspectives on how these challenges might be overcome.

Let us begin by stating what is conjectured to be the case. Let $d \geq 7$ and consider the hypercubic lattice $\Z^d$. The conjectured analogue of \cref{thm:main_volume} is that there exists $\delta>0$ such that
\begin{align}
\bP_{p}(n \leq |K| < \infty)  &\asymp \begin{cases} 
n^{-1/2} \exp\left[ -\Theta\left(|p-p_c|^2 n\right)\right] & p \in (p_c-\delta,p_c)\\
n^{-1/2} & p=p_c\\
n^{-1/2} \exp\left[ -\Theta\left(\left(|p-p_c|^2 n\right)^{(d-1)/d}\right)\right] & p\in (p_c,p_c+\delta),
\end{cases}
\label{eq:Zd_volume}
\end{align}
while the conjectured analogue of \cref{thm:main_radius} is that
\begin{align}
\bP_{p}(r \leq \Rad_\mathrm{int}(K) < \infty)  &\asymp 
n^{-1} \exp\left[ -\Theta\left(|p-p_c| n\right)\right] 
\label{eq:Zd_intrinsic}
\intertext{and}
\bP_{p}(r \leq \Rad_\mathrm{ext}(K) < \infty)  &\asymp n^{-2} \exp\left[ -\Theta\left(|p-p_c|^{1/2} n\right)\right] 
\label{eq:Zd_extrinsic}
\end{align}
for all $p\in (p_c-\delta,p_c+\delta)$. 
Further related questions of interest include the behaviour of the truncated two-point function $\hat \tau_p (x,y) = \bP(x \leftrightarrow y, x \nleftrightarrow \infty)$, which is conjectured to satisfy
\begin{equation}
\label{eq:Zd_two_point}
\hat \tau_p(x,y) \asymp \|x-y\|^{-d+2} \exp\left[ - \Theta\left(|p-p_c|^{1/2}\|x-y\|\right)\right] 
\end{equation}
for all $p \in (p_c-\delta,p_c+\delta)$ and $x,y \in \Z^d$. In particular, it is conjectured that the \textbf{correlation length} $\xi(p)$ satisfies
\begin{equation}
\xi(p)^{-1} := -\lim_{n\to \infty} \frac{1}{n} \log \sup \left\{ \bP_p(0 \leftrightarrow x, 0 \nleftrightarrow \infty) : x \in \Z^d, \|x\|\geq n\right\} \asymp |p-p_c|^{-1/2}
\label{eq:Zd_correlation_length}
\end{equation}
for $p \in (p_c-\delta,p_c+\delta)$. (Note that \eqref{eq:Zd_two_point} would trivially imply \eqref{eq:Zd_correlation_length}.) Besides their intrinsic interest, a solution to these conjectures may be a necessary prerequisite to understanding  invasion percolation, the minimal spanning forest, and random walks on slightly supercritical clusters. See \cite[Part IV]{heydenreich2015progress} for an overview.

At present, the state of these conjectures can be summarised as follows: The $p=p_c$ cases of \eqref{eq:Zd_volume} and \eqref{eq:Zd_intrinsic} were proven to hold for all quasi-transitive graphs satisfying the triangle condition by Barsky and Aizenman \cite{MR1127713} and Kozma and Nachmias \cite{MR2551766}, respectively. We showed how these statements imply the subcritical cases of the same statements in \cref{prop:subcritical_upper,lem:subcritical_radius_lower,lem:volume_lower}. Hara and Slade \cite{MR1043524} proved via the lace expansion that the triangle condition holds on $\Z^d$ for sufficiently large $d$, as well as for ``spread out'' models in dimension $d \geq 7$. Around the same time, Hara built upon the methods of \cite{MR1043524} to prove the $p \leq p_c$ case of \eqref{eq:Zd_correlation_length} under the same hypotheses, i.e., that $\xi(p) \asymp (p-p_c)^{-1/2}$ as $p \uparrow p_c$. Later, Hara, van der Hofstad, and Slade \cite{MR1959796} performed a `physical space' version of the lace-expansion that allowed them to prove the $p=p_c$ case of \eqref{eq:Zd_two_point} under the same hypotheses. Kozma and Nachmias \cite{MR2748397} then applied this result to prove the $p=p_c$ case of \eqref{eq:Zd_extrinsic}.  The subcritical case of \eqref{eq:Zd_extrinsic} has very recently been established in the independent works \cite{hutchcroft2021high,chatterjee2021subcritical}, with \cite{hutchcroft2021high} also establishing the upper bound of \eqref{eq:Zd_two_point} in the subcritical case.
In contrast, almost no progress has been made on the slightly supercritical cases of these conjectures.

As we stated in the introduction, we are optimistic that some of the techniques we have developed in this paper will be prove useful to the eventual solution of these conjectures. We now outline some ideas about what such a solution might look like. 
Note that several of the challenges one would need to overcome to adapt our methods to the high-dimensional Euclidean setting are of a similar nature to those one would need to overcome to solve the more qualitative problems stated in \cite[Section 5.3]{HermonHutchcroftSupercritical}.
\begin{enumerate}
\item A good first step would be to find a sharp bound on the negative part of the derivative $\bD_{p,n}|K|^k$ for $p$ slightly supercritical. Such a bound would need to be of order $C^k (k!)^{d/(d-1)}|p-p_c|^{-2k}$, but it is unclear what form it should take, presumably being written in terms of some higher truncated moment. A potentially serious difficulty is that it seems one cannot rely on a worst case analysis of the expected number of edges connecting some deterministic set $S$ to infinity off of $S$, as we did in the proof of \cref{prop:NegativeTermQuant}. Indeed, heuristically, if $\Lambda_n = [-n,n]^d$ is a box with $n = \Omega(\xi(p)) = \Omega((p-p_c)^{-1/2})$ then the typical number of edges in the boundary of $\Lambda_n$ whose other endpoint is connected to $\infty$ off of $\Lambda_n$ should be of order $(p-p_c)^{3/2} |\Lambda_n|^{(d-1)/d}$, where $(p-p_c)^{3/2}$ is conjectured to be the order of the probability that the origin is connected to infinity inside a half-space. See \cite{chatterjee2018restricted} for various related rigorous results. This (presumably) worst case bound would be too small to lead to a proof of \eqref{eq:Zd_volume}, even if one did not have the positive term to contend with. Thus, to bound $\bD_{p,n}$ via this approach, one would need to somehow understand how the geometry of large finite clusters in slightly supercritical percolation leads them to have a greater number of pivotal connections to infinity in their boundary than a box of comparable volume would. The techniques developed to understand phenomena such as Wulff crystals in supercritical percolation may be relevant \cite{MR2241754}.

An alternative approach may be to use the \emph{OSSS inequality}, due to  O’Donnel, Saks,
Schramm, and Servedio \cite{o2005every}, which has recently been recognised as a powerful tool in the study of percolation and other models following the breakthrough work of Duminil-Copin, Raoufi, and Tassion \cite{MR3898174,1705.07978}; see also \cite{1901.10363,hutchcroft2021high} for applications to the critical behaviour of  Bernoulli percolation. Briefly, this inequality lets us prove differential inequalities by finding randomized algorithms that determine the value of the function whose expectation we are interested in but which have a low maximum \emph{revealment}, that is, a low maximum probability of querying whether any particular edge is open or closed.  While this inequality is most powerful as a tool for studying monotone functions, it can also be used to bound the expected \emph{total} number of pivotals for non-monotone functions, which would mean bounding the sum $\bD_{p,n}+\bU_{p,n}$ in our context. Such a bound would in fact be just as viable in the remainder of our strategy as a bound on $\bD_{p,n}$ itself. The difficulty with this approach is to find, say, a low-revealment algorithm determining whether or not the origin is in a large finite cluster. It is unclear how this might be done. One possibility is to use invasion percolation, but this may be putting the cart before the horse; it seems that invasion percolation should be even harder to analyse than slightly supercritical percolation itself. 
\item Even if one is able to get good bounds on $\bD_{p,n}$ or $\bD_{p,n}+\bU_{p,n}$, there remains the substantial challenge of getting good upper bounds on $\bU_{p,n}$ in the manner of \eqref{eq:overview_hope}. It is possible that this could be done by methods that are rather similar to what we have done in \cref{subsec:positive_term,subsec:positivetermI,subsec:positivetermII}. However, it is likely that, due to the different form of the lower bound on the negative term, one would need to initiate this analysis by proving a version of our skinny clusters estimate in which one could profitably take the radius to be at least a \emph{power} of the radius rather than a small multiple as we have done here. Bounds of this form are known for Galton-Watson trees \cite{MR3077536,MR3916103}, but it seems unclear what one could hope to be true for high dimensional lattices, or how such an estimate might be proven. If such a bound on skinny clusters were found, we are hopeful that an analysis very similar to that performed in \cref{subsec:positivetermI,subsec:positivetermII} could be used derive the higher-order variants of this bound needed to bound $\bU_{p,n}|K|^k$.
\end{enumerate}
Finally, we remark that, by analogy with our setting, it may be substantially easier to obtain the correct behaviour for the intrinsic radius than for the volume. 

\section*{Glossary of notation}
\label{sec:Glossary}

\begin{enumerate}[leftmargin=3cm]
\item[$\asymp$, $\preceq$, $\succeq$] Equalities and inequalities holding to within positive multiplicative constants depending only on the choice of graph.
\item[$\mathbf{P}_p=\mathbf{P}_p^G$] The law of Bernoulli-$p$ bond percolation on $G$. Defined in \cref{sec:intro}.
\item[$p_c$, $p_{2\to 2}$] The critical probability and $L^2$ boundedness threshold. The critical probability $p_c$ always refers to $p_c(G)$, rather than $p_c$ of any subgraph $H$ of $G$, unless specified otherwise. Defined in \cref{sec:intro}.
\item[$T_p$] The two-point matrix $T_p(u,v)=\bP_p(u\leftrightarrow v)$. Defined in \cref{sec:intro}.
\item[$K_v$] The cluster of $v$. When $v$ is fixed we often write $K=K_v$. Defined in \cref{subsec:intro_finite}.
\item[$\Rad_\mathrm{int}(K_v)$] The intrinsic radius of $K_v$, that is, the maximum intrinsic distance from $v$ to another point of $K_v$. Defined in \cref{subsec:intro_finite}.
\item[$\Rad_\mathrm{ext}(K_v)$] The extrinsic radius of $K_v$, that is, the maximum extrinsic distance from $v$ to another point of $K_v$. Defined in \cref{subsec:intro_finite}.
\item[$\nabla_p(v)$] The triangle sum at $v$. Defined in \cref{subsec:intro_finite}.
\item[$R_v$] Shorthand for $\Rad_\mathrm{int}(K_v)$. Defined in \cref{sec:inside_window}.
\item[$E_v$] The number of edges touching $K_v$. Defined in \cref{sec:inside_window}.
\item[$B_\mathrm{int}(v,n)$] The set of vertices of $K_v$ of intrinsic distance at most $n$ from $v$. Defined in \cref{sec:inside_window}.
\item[$\partial B_\mathrm{int}(v,n)$] The set of vertices of $K_v$ of intrinsic distance exactly $n$ from $v$. Defined in \cref{sec:inside_window}.
\item[$\bE_{p,n}$] The expectation truncated when the cluster of $v$ touches more than $n$ edges. Defined in \cref{subsec:setup}.
\item[$\bU_{p,n}$] The positive part of the derivative of $\bE_{p,n}$. Defined in \cref{subsec:setup}.
\item[$-\bD_{p,n}$] The negative part of the derivative of $\bE_{p,n}$. Defined in \cref{subsec:setup}.
\item[$\Bridges(v_1,\ldots,v_k;K_v)$] The number of edges in the subtree of the tree of two-connected components of $K_v$ spanned by the union of the geodesics between the vertices in this tree corresponding to $v_1,\ldots,v_k$. Defined in \cref{subsec:positivetermI}.
\end{enumerate}
\noindent Definitions of generating functions (all defined in \cref{subsec:positivetermI}):
\begin{align*}
 \mathscr{G}_{k,n}(s,t;G,v,p) 
&=
\sum_{a=0}^\infty \sum_{b=0}^\infty \sum_{\substack{x_1,\ldots,x_k\\ \in V(G)}} \bP_{p,n}^G\Bigl(x_1,\ldots,x_k \in K_v, E_v = a, \Bridges(v,x_1,\ldots,x_k;K_v) =b\Bigr) e^{sa+tb}.
\\
\mathscr{F}_{k,n}(s,t;G,p) &= \sup\left\{\mathscr{G}_{k,n}(s,t;H,u,p) : H \text{ a subgraph of $G$, $u$ a vertex of $H$} \right\}.
\\
\sM_n(s,t,u;G,p) &= \sum_{k=0}^\infty \frac{u^k}{k!} \sF_{k+1,n}(s,t;G,p).
\end{align*}

\subsection*{Acknowledgments} We thank Jonathan Hermon and Asaf Nachmias for many helpful discussions, and thank Remco van der Hofstad for helpful comments on an earlier version of this manuscript. We also thank Antoine  Godin for sharing his simplified proof of \cref{prop:NegativeTermQuant} with us and thank the anonymous referees for their careful reading of the manuscript.

\addcontentsline{toc}{section}{References}

 \setstretch{1}
 \footnotesize{
  \bibliographystyle{abbrv}
  \bibliography{unimodularthesis.bib}
  }
\end{document}